\newcommand{\FF}{\mathbb F}
\newcommand{\cL}{\mathcal L}
\newcommand{\cP}{\mathcal P}
\newcommand{\cC}{\mathcal C}
\newcommand{\PG}{\mathrm{PG}}
\newcommand{\vep}{\varepsilon }
\newcommand{\PGL}{\mathrm{PGL}}
\newcommand{\bee}[1]{\mathbf{e}_{#1}}
\newcommand{\GL}{\mathrm{GL}}
\newcommand{\Aut}{\mathrm{Aut}\,}
\newcommand{\Tr}{\mathrm{Tr}}
\newcommand{\rank}{\mathrm{rank}\,}
\newtheorem{theorem}{Theorem}[section]
\newtheorem{lemma}[theorem]{Lemma}
\newtheorem{corollary}[theorem]{Corollary}
\newtheorem{prop}[theorem]{Proposition}
\theoremstyle{definition}
\newtheorem{remark}{Remark}
\newtheorem{definition}[theorem]{Definition}
\newcommand{\cH}{\mathcal H}
\newcommand{\cM}{\mathcal M}
\begin{document}
\begin{frontmatter}
  \title{Linear codes arising from the point-hyperplane geometry-Part I: the
    Segre embedding}
    \author{I.~Cardinali}
    \ead{ilaria.cardinali@unisi.it}
    \affiliation{
      organization={Dep. Information Engineering and Mathematics, University of Siena},
      addressline={Via Roma 56},
      city={Siena},
      postcode={I-53100},
      country={Italy}}

    \author{L.~Giuzzi}
    \ead{luca.giuzzi@unibs.it}
    \affiliation{
      organization={DICATAM, Universit\`a di Brescia},
      addressline={Via Branze 43},
      city={Brescia},
      postcode={I-25123},
      country={Italy}}
\begin{abstract}
  Let $V$ be a vector space over the finite field $\FF_q$
  with $q$ elements and $\Lambda$ be the image of the Segre geometry $\PG(V)\otimes\PG(V^*)$ in $\PG(V\otimes V^*)$.
  Consider the subvariety $\Lambda_{1}$ of $\Lambda$ represented by the pure tensors $x\otimes \xi$ with $x\in V$ and $\xi\in V^*$ such that $\xi(x)=0$.
  Regarding $\Lambda_1$ 
as a projective system of $\PG(V\otimes V^*)$, we study the linear code $\cC(\Lambda_1)$ 
arising from it.
We show that
$\cC(\Lambda_1)$ is a minimal code and we determine its basic parameters,
its  full weight list and its linear automorphism group.  We also give a geometrical characterization of its minimum and second lowest weight codewords as well as of some of the words of maximum weight.
\end{abstract}
\begin{keyword}
  Segre embedding \sep Point-Hyperplane geometry \sep Projective code
  \MSC[2020] 51E22 \sep 94B05 \sep 14M12
\end{keyword}

\end{frontmatter}

\section{Introduction}
Following~\cite{TVZ}, an interesting and very geometric way to construct linear codes is to start from a \emph{projective system}, that is a spanning set of points in a finite projective space.
Indeed, if $\Omega $ is a set of $N$ distinct points of an $n$-dimensional projective space over a finite field $\FF_q$ then it is possible to construct a projective code  $\cC(\Omega)$ out of $\Omega$
by taking the coordinate representatives (with respect to some fixed reference
system) of the points of $\Omega$
as columns of a generator matrix.

In general, $\cC(\Omega)$ is not uniquely determined by $\Omega$, but it turns out to
be unique up to monomial equivalence; as such its metric properties
with respect to
Hamming's distance depend only on the set of points under consideration.
With a slight abuse of notation, which is however customary when dealing
with such projective codes, we shall speak of $\cC(\Omega)$ as \emph{the} code defined
by $\Omega=\{ [v_1],[v_2],\dots, [v_N]\}$ where the $v_i$'s  are fixed
chosen vector representatives of the points of the projective system.

The parameters $[N,k,d]$ of $\cC(\Omega)$ depend only on  the point set
$\Omega$: clearly, the length $N$ is the size of $\Omega$ and the dimension $k$
is the (vector) dimension of the subspace spanned
by $\Omega$. The spectrum of the cardinalities of the intersections of $\Omega$
with the hyperplanes of $\PG(\langle\Omega\rangle)$ is used to
determine the list of the weights of the code;
in particular, the minimum Hamming distance $d$ of $\cC(\Omega)$ is
\begin{equation}\label{min distance 1}
	d =N-\max_{H}|\Omega\cap H|,
\end{equation}
as $H$ ranges among all hyperplanes of the space $\PG(\langle\Omega\rangle)$;
we refer to~\cite{TVZ} for further details.

This approach for obtaining codes from projective systems has been effectively applied to construct and study
several families of projective codes, such as, for example,
Hermitian codes \cite{Bonini2023} 
Grassmann codes \cite{Ghorpade2001,Nogin1996},  Schubert codes
\cite{Ghorpade2013,Ghorpade2005},  polar Grassmann codes
\cite{Cardinali2016b, Cardinali2018, Cardinali2018c, Cardinali2016a}.

In this paper we shall consider one family of projective codes arising from a special subvariety of a Segre variety corresponding to the Segre embedding of the point-hyperplane geometry of $\PG(V)$.

Suppose that $V$ is an $(n+1)$-dimensional vector space over a finite field $\FF_q$ and let $V^*$ be its dual. The tensor product $V\otimes V^*$  can be identified with the vector space $M_{n+1}(q)$ of all square matrices of
order $n+1$ with coefficients in $\FF_q$.

The Segre variety $\Lambda$ of $\PG(V\otimes V^*)\cong\PG(M_{n+1}(q))$ is the variety whose point set consists of those projective points  represented by pure tensors $x\otimes \xi$ with $x\in V$ and $\xi\in V^*$, i.e. by $(n+1)$-square matrices of rank $1$ in $\PG(M_{n+1}(q))$. Another way of defining $\Lambda$ is to regard it as the image under the Segre map of the Segre geometry
$\Gamma=\PG(V)\otimes\PG(V^*)$; see Section~\ref{Segre geometry} for more information.

The
linear code $\cC(\Lambda)$ arising from $\Lambda$ has been introduced and extensively studied in~\cite{BGH}. In particular, not only the minimum distance but also the full Hamming weight distribution of $\cC(\Lambda)$ are known (see~\cite[§ 4]{BGH}), as well as its higher Hamming weights.

Here,  we  investigate the subvariety $\Lambda_{1}$ of the Segre variety  whose point set  is represented by $(n+1)$-square matrices of rank $1$ having null trace, or equivalently, defined  by the pure tensors $x\otimes \xi$ with $x\in V$ and $\xi\in V^*$ such that $\xi(x)=0$.

As mentioned before, the variety $\Lambda_1$ can also be regarded as the image under the Segre embedding of
the point-hyperplane geometry of $\PG(V)$ (also called the {\it long root geometry} $\bar{\Gamma}$ for the special linear group $\mathrm{SL}(n+1,q)$).
Briefly, the points of $\bar{\Gamma}$ are the point-hyperplane pairs  $(p,H)$ of $\PG(V)$ where $p\in H$.  We refer to Section~\ref{long root geometry} for more information on $\bar{\Gamma}$.

We focus on the linear code $\cC(\Lambda_1)$ constructed as explained at the beginning of the Introduction,
starting from the projective system $\Lambda_1$. We determine the parameters of  $\cC(\Lambda_1)$ and its full weight list, as well as a description of its linear automorphism group; see~\cite{MS} for the definitions.
We prove that of $\cC(\Lambda_1)$ is a {\it minimal code} and we give a complete geometric characterization of the words having  minimum weight and second lowest weight. We also characterize a remarkable family of words
of maximum weight.

The following are the main results of the paper.
\begin{theorem} \label{main thm 1} Suppose $V$ is an $(n+1)$-dimensional vector space over $\FF_q$ and let $\Lambda_{1}$ be the projective system of $\PG(V\otimes V^*)$ whose points are represented by the pure tensors $x\otimes \xi$ such that $\xi(x)=0$. The $[N_1,k_1,d_1]$-linear code $\cC(\Lambda_1)$ associated to $\Lambda_1$ has parameters \[N_1=\frac{(q^{n+1}-1)(q^n-1)}{(q-1)^2},
    \qquad k_1=n^2+2n,\qquad d_1=q^{2n-1} -q^{n-1}.\]
\end{theorem}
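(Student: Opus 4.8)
The plan is to compute each of the three parameters $N_1$, $k_1$, $d_1$ separately, with the minimum distance requiring the most work.

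The plan is to establish the three parameters separately, the length $N_1$ and dimension $k_1$ being comparatively direct while the minimum distance $d_1$ carries the bulk of the argument. For $N_1$ I would use the injectivity of the Segre map to identify $\Lambda_1$ with the set of pairs $([x],[\xi])\in\PG(V)\times\PG(V^*)$ satisfying $\xi(x)=0$. Fixing $[x]$, the functionals with $\xi(x)=0$ form the annihilator of $\langle x\rangle$, a hyperplane of $V^*$ contributing $(q^n-1)/(q-1)$ projective points; summing over the $(q^{n+1}-1)/(q-1)$ choices of $[x]$ yields $N_1=(q^{n+1}-1)(q^n-1)/(q-1)^2$.

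For $k_1$ I would show that $\langle\Lambda_1\rangle$ inside $V\otimes V^*\cong M_{n+1}(q)$ is exactly the trace-zero hyperplane, of dimension $(n+1)^2-1=n^2+2n$. Every point of $\Lambda_1$ has trace zero, so $\langle\Lambda_1\rangle$ is contained in that hyperplane. For the reverse inclusion, the off-diagonal matrix units $\bee{i}\otimes\bee{j}^*$ (for $i\neq j$) already lie in $\Lambda_1$, and expanding the trace-zero pure tensor $(\bee{i}+\bee{j})\otimes(\bee{i}^*-\bee{j}^*)=\bee{i}\otimes\bee{i}^*-\bee{i}\otimes\bee{j}^*+\bee{j}\otimes\bee{i}^*-\bee{j}\otimes\bee{j}^*$ shows that each difference $\bee{i}\otimes\bee{i}^*-\bee{j}\otimes\bee{j}^*$ lies in the span as well; together these give a basis of the trace-zero space, so $k_1=n^2+2n$.

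For $d_1$ I would invoke~\eqref{min distance 1}, so that $d_1=N_1-\max_H|\Lambda_1\cap H|$ as $H$ runs over the hyperplanes of $\PG(\langle\Lambda_1\rangle)$. Using the nondegenerate trace form $(A,B)\mapsto\Tr(AB)$ on $M_{n+1}(q)$, every such hyperplane is the trace of $H_M=\{A:\Tr(MA)=0\}$ with $M\in\mathrm{End}(V)$, determined up to scalars and modulo $\langle I\rangle$ (since $\Tr(I\cdot(x\otimes\xi))=\xi(x)=0$ on $\Lambda_1$). Identifying $x\otimes\xi$ with the endomorphism $v\mapsto\xi(v)x$ gives $\Tr(M(x\otimes\xi))=\xi(Mx)$, so $\Lambda_1\cap H_M$ consists of the points $[x\otimes\xi]$ with $\xi(x)=\xi(Mx)=0$. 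Fixing $[x]$, the admissible $\xi$ annihilate $\langle x,Mx\rangle$, yielding $(q^n-1)/(q-1)$ points when $x$ is an eigenvector of $M$ and $(q^{n-1}-1)/(q-1)$ otherwise. Writing $a$ for the number of eigenpoints of $M$ and $T=(q^{n+1}-1)/(q-1)$, this gives $|\Lambda_1\cap H_M|=T\,(q^{n-1}-1)/(q-1)+a\,q^{n-1}$, strictly increasing in $a$.

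The crux is therefore to maximize $a$ over non-scalar $M$. The eigenpoints are the union of the projectivized eigenspaces $\PG(E_\lambda)$, which are mutually skew, so $a=\sum_\lambda (q^{\dim E_\lambda}-1)/(q-1)$ subject to $\sum_\lambda\dim E_\lambda\le n+1$ with $M$ non-scalar. Since $d\mapsto(q^d-1)/(q-1)$ is strictly convex, the sum is largest for the most unbalanced admissible configuration, namely one eigenvalue of multiplicity $n$ together with one further eigenvalue, giving $a=(q^n-1)/(q-1)+1$. I expect this optimization to be the main obstacle: one must rule out both non-diagonalizable maps and eigenvalues lying outside $\FF_q$ (each strictly decreases the eigenpoint count) and invoke convexity to force the $(n,1)$ split. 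Substituting the maximal $a$ into the count and simplifying then gives $d_1=q^{n-1}\bigl((q^{n+1}-1)/(q-1)-(q^n-1)/(q-1)-1\bigr)=q^{n-1}(q^n-1)=q^{2n-1}-q^{n-1}$, as claimed.
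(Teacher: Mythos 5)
Your proposal is correct and follows essentially the same route as the paper's proof: the length is counted as incident point--hyperplane pairs, hyperplanes of $\langle\Lambda_1\rangle$ are parametrized by the trace form modulo $\langle I\rangle$, the intersection with $\Lambda_1$ is computed by slicing over one factor and counting projective eigenvectors of $M$, and the minimum distance follows by maximizing that eigenvector count over non-scalar matrices, with the optimum attained at the $(n,1)$ eigenspace split. The only cosmetic differences are that you prove the spanning (dimension) claim directly with explicit trace-zero tensors where the paper cites the embedding dimension, and you replace the paper's eigenspace-merging lemma by a convexity argument, which is the same optimization idea in different form.
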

A direct consequence of Theorem~\ref{main thm 1} is that
$\lim_{n\to\infty}\frac{d_1}{N_1}=1$; so, the asymptotic
minimum distance for these codes is actually good, even if the information
rate $\lim_{n\to\infty}\frac{k_1}{N_1}$ goes to $0$.

The following theorem gives information on the weight list of $\cC(\Lambda_1)$
as well as its automorphism group.
\begin{theorem} \label{main thm 2}
		Let $\cC(\Lambda_1)$ be the linear code introduced in Theorem~\ref{main thm 1}. The following hold.
	\begin{enumerate}
        \item\label{t-cor}
          The set of weights of $\cC(\Lambda_1)$ is in bijective correspondence with the set
\[\{(g_1,\dots, g_t)\colon \sum_{i=1}^tg_i \leq n+1,\,\, 1\leq g_1\leq \dots \leq  g_t\leq  n+1,\,\,1 \leq t \leq q\}\cup \{0\}.\]
\item\label{t-isom}
  The code admits an automorphism group isomorphic to
  the central product $\PGL(n+1,q)\cdot\FF_q^{\star}$.
\end{enumerate}
\end{theorem}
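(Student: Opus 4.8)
The plan is to make the codewords of $\cC(\Lambda_1)$ completely explicit and then read off their weights and their symmetries. Identify $V\otimes V^*$ with $M_{n+1}(q)$ and use the nondegenerate trace form $(A,X)\mapsto\Tr(AX)$; since this form has $\langle\Lambda_1\rangle=\{X:\Tr X=0\}$ as the orthogonal complement of $\langle I\rangle$, every functional on the hyperplane $\langle\Lambda_1\rangle$ is $X\mapsto\Tr(AX)$ for a matrix $A$ determined modulo $\FF_q I$. Hence the codewords are indexed by the classes $[A]\in M_{n+1}(q)/\langle I\rangle$ (a space of dimension $(n+1)^2-1=k_1$, as it must be), and the value of the codeword $[A]$ at the point represented by $x\otimes\xi$ (with $\xi(x)=0$) is $\Tr(A\,x\otimes\xi)=\xi A x$. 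The weight of $[A]$ is therefore $N_1-Z(A)$, where $Z(A)$ is the number of points $([x],[\xi])$ of $\Lambda_1$ with $\xi A x=0$, so the whole of part~\ref{t-cor} reduces to understanding $Z(A)$.

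To compute $Z(A)$ I fix $[x]$ and count the admissible $[\xi]$. If $Ax\in\langle x\rangle$, i.e.\ $x$ spans an eigendirection of $A$, then $\xi A x=0$ is already implied by $\xi(x)=0$, giving $(q^n-1)/(q-1)$ choices for $[\xi]$; otherwise $x$ and $Ax$ are independent and the two linear conditions leave $(q^{n-1}-1)/(q-1)$ choices. Letting $|E|$ denote the number of eigendirections of $A$ in $\PG(V)$, summing over $[x]$ gives $Z(A)$ as an affine function of $|E|$, and consequently the weight of $[A]$ equals $q^{n-1}\bigl(\tfrac{q^{n+1}-1}{q-1}-|E|\bigr)$, a strictly decreasing function of $|E|$. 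The eigendirections of $A$ are exactly the points of the pairwise disjoint projective eigenspaces attached to the distinct eigenvalues of $A$ lying in $\FF_q$, so if these have geometric multiplicities $g_1\le\cdots\le g_t$ then $|E|=\sum_{i=1}^t(q^{g_i}-1)/(q-1)$, while $|E|=0$ when $A$ has no eigenvalue in $\FF_q$. This assigns to each $[A]$ the tuple $(g_1,\dots,g_t)$, whose admissible range is precisely $1\le t\le q$ (at most $q$ distinct eigenvalues in $\FF_q$) and $\sum g_i\le n+1$ (multiplicities inside an $(n+1)$-dimensional space); as a sanity check, the tuple $(1,n)$ gives the largest proper $|E|$ and hence the minimum distance $d_1=q^{2n-1}-q^{n-1}$ of Theorem~\ref{main thm 1}.

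It remains to show that the assignment tuple~$\mapsto$~weight is a bijection onto the weight set, and this is the step I expect to be the main obstacle. Surjectivity is not hard: for $t\ge1$ one chooses $t$ distinct scalars in $\FF_q$ (possible since $t\le q$), prescribes eigenspaces of dimensions $g_1,\dots,g_t$, and absorbs the excess $n+1-\sum g_i$ into enlarging one Jordan block of a single eigenvalue, which leaves its geometric multiplicity unchanged; the empty tuple is realised by any matrix with $\FF_q$-irreducible characteristic polynomial. Injectivity is the delicate point, and is exactly where $t\le q$ is used: writing $S=|E|$ one has $(q-1)S+t=\sum_i q^{g_i}$, so $t$ is recovered from $S$ modulo $q$ and then $\sum_i q^{g_i}$ is determined; the claim is that a value $\sum_i q^{g_i}$ with exactly $t\le q$ summands, each exponent $\ge1$, determines its multiset of exponents. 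The only failure of uniqueness would be a carry $q\cdot q^{g}=q^{g+1}$, which forces all $t=q$ summands to be equal, and a short comparison with any rival representation rules it out. Thus distinct admissible tuples give distinct $|E|$, hence distinct weights; the symbol $0$ in the index set (the empty tuple, $|E|=0$) indexes the maximum weight, while the zero codeword, coming from the scalar class $[A]=0$ and the tuple $(n+1)$, has weight $0$.

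For part~\ref{t-isom} the plan is to exhibit the group inside the linear automorphism group and identify its structure. Each $g\in\GL(n+1,q)$ acts on $W=\langle\Lambda_1\rangle$ by conjugation $C_g\colon X\mapsto gXg^{-1}$; this preserves both rank and trace, hence permutes $\Lambda_1$ and, through the projective-systems formalism, induces a linear automorphism of $\cC(\Lambda_1)$. Since $C_g$ is trivial exactly when $g$ is scalar, the map $g\mapsto C_g$ yields a faithful copy of $\PGL(n+1,q)$. The dilations $s_\lambda\colon X\mapsto\lambda X$ for $\lambda\in\FF_q^{\star}$ scale every coordinate by the same factor and so give central automorphisms forming a copy of $\FF_q^{\star}$. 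These two families commute, and a product $\lambda C_g$ acts trivially on $W$ only for $g$ scalar and $\lambda=1$; hence the subgroup they generate is isomorphic to the central product $\PGL(n+1,q)\cdot\FF_q^{\star}$, which is the assertion. The residual work here is the routine verification that these maps are genuine monomial automorphisms of the code and the bookkeeping of the central-product structure; by contrast with part~\ref{t-cor}, no serious difficulty arises.
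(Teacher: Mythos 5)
Your proposal is correct and follows essentially the same route as the paper: codewords are identified with classes in $M_{n+1}(q)/\langle I\rangle$ via the trace form, the weight is an affine (strictly decreasing) function of the number of eigendirections, surjectivity onto the tuple set is obtained by the same diagonal-plus-Jordan-block construction, and injectivity by the same reduction modulo $q$ recovering $t$ (your base-$q$ digit-uniqueness finish replaces the paper's induction on $t$, and your right-eigenvector count is just the transpose of the paper's left-eigenvector one). The only noteworthy divergence is in part 2, where you let $\GL(n+1,q)$ act by conjugation directly on the ambient space $\langle\Lambda_1\rangle=M^0_{n+1}(q)$ and invoke the projective-system formalism to get monomial automorphisms; this works uniformly in all characteristics and neatly avoids the paper's case split between $p\nmid(n+1)$ and $p\mid(n+1)$, which the paper needs only because it insists on choosing canonical coset representatives (trace-zero, respectively vanishing $(1,1)$-entry) for the classes in $M_{n+1}(q)/\langle I\rangle$.
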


Many properties of the geometry $\bar{\Gamma}$  will play a crucial role in giving information on the structure of the code
$\cC(\Lambda_1)$ arising from $\Lambda_{1} $.

In particular, the concept of {\it geometrical hyperplane of $\bar{\Gamma}$ arising from an embedding}\/ recalled in Section~\ref{prelim} will be used to characterize codewords  having minimum, second lowest or maximum weight in geometric terms. We refer to Section~\ref{forma saturata} for the definition and description of the hyperplanes of $\bar{\Gamma}$ mentioned in Theorem~\ref{main thm 2-bis}.

\begin{theorem} \label{main thm 2-bis}
	Let $\cC(\Lambda_1)$ be the linear code introduced in Theorem~\ref{main thm 1}. The following hold.
  \begin{enumerate}
  \item \label{mw-1} $\cC(\Lambda_1)$ is a minimal code.
  \item\label{mw0}
    The minimum weight codewords of $\cC(\Lambda_1)$ have weight
     $q^{2n-1}-q^{n-1}$ and correspond to
     the   quasi-singular but non-singular hyperplanes of the point-hyperplane
     geometry $\bar{\Gamma}$.
     These hyperplanes arise from
    (diagonalizable) matrices of rank $1$ and non-null trace.

  The second lowest weight codewords of $\cC(\Lambda_1)$ have weight $q^{2n-1}$
    and correspond to the singular hyperplanes of
    $\bar{\Gamma}$. They arise from (non-diagonalizable) matrices
    of rank $1$ and null trace.
  \item\label{maxw0} The maximum weight codewords of $\cC(\Lambda_1)$ have weight
  $q^{n-1}(q^{n+1}-1)/(q-1)$ and correspond to matrices admitting no eigenvalue in $\FF_q$. Every spread-type hyperplane of $\bar{\Gamma}$ is associated to a maximum weight codeword. Conversely, those maximum weight codewords arising from matrices having  a minimal polynomial of degree $2$ are associated to
  {\it spread-type hyperplanes}\/ of $\bar{\Gamma}$.

  \end{enumerate}
\end{theorem}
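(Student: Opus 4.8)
The plan is to push everything through the trace form on $M_{n+1}(q)=V\otimes V^*$. Since $\Lambda_1$ spans the hyperplane of trace-zero matrices, every codeword of $\cC(\Lambda_1)$ arises by evaluating a linear functional on $\langle\Lambda_1\rangle$, and each such functional is $A\mapsto\Tr(MA)$ for some $M\in M_{n+1}(q)$, with $M$ and $M'$ producing the same codeword precisely when $M-M'\in\langle I\rangle$. Evaluating at a point $x\otimes\xi\in\Lambda_1$ gives $\Tr(M(x\otimes\xi))=\xi(Mx)$, so the codeword $c_M$ vanishes at $[x\otimes\xi]$ (with $\xi(x)=0$) exactly when $\xi(Mx)=0$. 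Hence the zero set of $c_M$ is a geometric hyperplane of $\bar\Gamma$ arising from the Segre embedding, and the correspondence ``codewords $\leftrightarrow$ matrices mod $\langle I\rangle$ $\leftrightarrow$ hyperplanes of $\bar\Gamma$'' is the backbone of the argument; the named types (singular, quasi-singular, spread-type) are read off from $M$ through the description in Section~\ref{forma saturata}.

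Next I would obtain a closed weight formula by counting zeros. Fix $[x]\in\PG(V)$: if $x$ is an eigenvector of $M$ (that is $Mx\in\langle x\rangle$, possibly $Mx=0$), then $\xi(x)=0$ already forces $\xi(Mx)=0$, contributing $\tfrac{q^{n}-1}{q-1}$ zeros; otherwise $x,Mx$ are independent and the covectors with $\xi(x)=\xi(Mx)=0$ number $\tfrac{q^{n-1}-1}{q-1}$. Writing $e(M)$ for the number of eigenpoints $[x]\in\PG(V)$ of $M$ and summing (the two local counts differ by $q^{\,n-1}$), the count of zeros collapses and yields
\[
\wt(c_M)=q^{\,n-1}\left(\frac{q^{n+1}-1}{q-1}-e(M)\right).
\]
Thus extremizing the weight is the same as extremizing $e(M)=\sum_i\tfrac{q^{d_i}-1}{q-1}$, where the $d_i$ are the $\FF_q$-eigenspace dimensions of $M$, subject to $\sum_i d_i\le n+1$ and $M\notin\langle I\rangle$.

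By a short convexity/concentration analysis the largest admissible value is $\tfrac{q^{n}-1}{q-1}+1$, attained exactly when (mod $\langle I\rangle$) $M$ has eigenspace dimensions $\{n,1\}$, i.e.\ is diagonalizable of rank $1$ with non-null trace; this gives the minimum weight $q^{2n-1}-q^{n-1}$. The next value $\tfrac{q^{n}-1}{q-1}$ forces a single eigenspace of dimension $n$ together with one length-$2$ Jordan block, i.e.\ a rank-$1$ null-trace (nilpotent) matrix, giving the second weight $q^{2n-1}$; these correspond respectively to the quasi-singular non-singular and to the singular hyperplanes. At the opposite extreme $e(M)=0$ means $M$ has no eigenvalue in $\FF_q$ and yields the maximum weight $q^{\,n-1}(q^{n+1}-1)/(q-1)$; such matrices exist (e.g.\ companion matrices of irreducible polynomials), and a matrix whose minimal polynomial is $\FF_q$-irreducible of degree $2$ turns $V$ into an $\FF_{q^2}$-space, so its hyperplane is of spread type, while conversely every spread-type hyperplane comes from such a matrix.

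For minimality I would argue directly on supports. The support of $c_M$ is $\{([x],[\xi]):\xi(x)=0,\ \xi(Mx)\ne0\}$, so $\mathrm{supp}(c_M)\subseteq\mathrm{supp}(c_{M'})$ means that every $\xi$ killing both $x$ and $M'x$ also kills $Mx$; as the covectors vanishing on $x$ and $M'x$ are exactly those vanishing on $\langle x,M'x\rangle$, this is equivalent to $Mx\in\langle x,M'x\rangle$ for all $x\in V$. The crux is then the linear-algebra lemma: if $M'\notin\langle I\rangle$ and $Mx\in\langle x,M'x\rangle$ for every $x$, then $M\in\langle I,M'\rangle$. I would prove it by fixing $x_0$ with $x_0,M'x_0$ independent, writing $Mx=a(x)x+b(x)M'x$ on the generic locus, and testing the dependence on generic combinations to force $a,b$ constant; after reducing $M'$ to rational canonical form, the collinearity of the eigenvalue pairs $(\mu_i,\lambda_i)$ imposed by a \emph{generic} $x$ produces a single affine relation $\lambda=a+b\mu$, i.e.\ $M=aI+bM'$. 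Since $c_I=0$, this gives $c_M=b\,c_{M'}$, so any support containment forces proportionality, which is exactly minimality. I expect this lemma to be the main obstacle: the generic argument is transparent when $\dim V$ is large, but one must handle derogatory $M'$ (repeated eigenspaces) and the low-dimensional cases carefully, where the book-keeping of canonical forms and the bound of at most $q$ distinct eigenvalues come into play.
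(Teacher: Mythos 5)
Your handling of parts~\ref{mw0} and~\ref{maxw0} is essentially the paper's own proof. The weight formula $\wt(c_M)=q^{n-1}\bigl(\tfrac{q^{n+1}-1}{q-1}-e(M)\bigr)$ is exactly Lemma~\ref{pesi}/Corollary~\ref{weights} (you count right eigenvectors where the paper counts left ones, which changes nothing); the extremal analysis of eigenspace-dimension configurations is Lemma~\ref{max int} and Propositions~\ref{min codewords} and~\ref{second lowest} --- though your ``short convexity/concentration analysis'' compresses a step the paper proves explicitly, namely that $e(M)=\tfrac{q^n-1}{q-1}$ \emph{forces} a single eigenspace, hence rank $1$ and trace $0$ modulo $\langle I\rangle$; and the spread-type discussion (no eigenvalue $\Rightarrow$ maximum weight; irreducible quadratic minimal polynomial $\Rightarrow M^2x=-\alpha Mx-\beta x\in\langle x,Mx\rangle$, hence spread-type by Proposition~\ref{spread-type 2}) is Proposition~\ref{maxwpr} and Proposition~\ref{ppmw}. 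Your additional claim that \emph{every} spread-type hyperplane comes from a matrix with irreducible quadratic minimal polynomial is neither proved by you nor needed for the statement.

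The genuine gap is in part~\ref{mw-1}. Your reduction of minimality to the lemma ``if $M'\notin\langle I\rangle$ and $Mx\in\langle x,M'x\rangle$ for every $x$, then $M\in\langle I,M'\rangle$'' is correct: support containment of $c_M$ in $c_{M'}$ is equivalent to the containment $\cH_{M'}\subseteq\cH_M$ of the zero sets, which is equivalent to your local condition, and the conclusion gives $c_M=b\,c_{M'}$ since $c_I=0$. But this lemma \emph{is} the hard content: it says precisely that no plain-type hyperplane of $\bar{\Gamma}$ properly contains another, i.e.\ a special case of the maximality of all geometric hyperplanes of $\bar{\Gamma}$, which is exactly what the paper imports from Pasini's theorem and combines with the cutting-set criterion (Proposition~\ref{c:min}), rather than reproving. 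You leave the lemma unproven, flag it yourself as ``the main obstacle,'' and the sketch you give would not go through as written: the standard trick of comparing $a(x),b(x),a(y),b(y)$ via $x+y$ needs $x,M'x,y,M'y$ linearly independent, which is impossible when $n+1=3$; over small $q$ ``generic'' vectors in your sense may not exist at all, since a union of proper subspaces can cover $V$; and eigenvectors of $M'$ (where $a,b$ are not even well defined) and derogatory $M'$ require separate arguments. Note, moreover, that the lemma is genuinely dimension-sensitive: for $n=1$ the space $\langle x,M'x\rangle$ is either $\langle x\rangle$ or all of $V$, the condition degenerates, the lemma fails, and correspondingly $\cC(\Lambda_1)$ has full-support codewords and is \emph{not} minimal. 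So any correct proof must use $n\geq 2$ in an essential way, which your sketch nowhere does. Until that lemma is actually proved --- or Pasini's maximality theorem is invoked, as the paper does --- part~\ref{mw-1} is not established.
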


\subsection{Organization of the paper}
In Section~\ref{Sec 1} we will set the notation (Subsection~\ref{notation}) and recall all the basics we need about embeddings of geometries (Subection~\ref{prelim}),
minimal codes (Subsection~\ref{ssmin}),
the Segre geometry (Subsection~\ref{Segre geometry}), the point-hyperplane geometry $\bar{\Gamma}$ of
$\PG(V)$, its embeddings (Subsection~\ref{long root geometry}) and
the saturation form (Subsection~\ref{forma saturata}).
We will then define the quasi-singular, singular and spread-type hyperplanes of $\bar{\Gamma}$ (Subsection~\ref{hyperplanes}).

Section~\ref{long root geometry code 1} is focused on the code arising from the variety $\Lambda_1$ of the pure tensors $x\otimes \xi$ with $x\in V$ and $\xi\in V^*$ such that $\xi(x)=0$ and here we will prove  Theorems~\ref{main thm 1}, \ref{main thm 2} and \ref{main thm 2-bis}.

\section{Notation and Basics}\label{Sec 1}
\subsection{Notation}\label{notation}
Let $V=V(n+1,\FF_q)$ be an $(n+1)$-dimensional vector space over the finite field $\FF_q$ and $V^*$ its dual. Henceforth we always assume that $E = (e_i)_{i=1}^{n+1}$ is a given basis of $V$
and $E^*=(\eta_i)_{i=1}^{n+1}$ is the dual of $E$; hence a basis of $V^*$.
We shall regard a vector $x = \sum _{i=1}^{n+1}e_i x_i $  of $V$,
represented by the $(n + 1)$-tuple $(x_i)_{i=1}^{n+1}$
of its coordinates with respect to $E$, as a column, namely an $(n+1) \times 1$-matrix.

Similarly, every vector $\xi = \sum _{i=1}^{n+1} {\xi}_i {\eta}_i \in V^* $ is regarded with
respect to $E^*$ as a $1 \times (n+1)$ matrix $(\xi_1, \xi_2, \dots, \xi_{n+1})$.
Clearly, $\xi x=0$ in terms of the usual row-by-column product if and
only if $\xi(x)=0$, that is
$\sum_{i=1}^{n+1}\xi_i x_i=0$.
Henceforth we shall always use Greek lower case letters to denote vectors
of $V^*$ (regarded as \emph{row} vectors) and Roman lower case letters
to denote vectors of $V$ (regarded as column vectors).

The tensor product $V\otimes V^*$ is  isomorphic to the vector space $M_{n+1}(q)$ of the square matrices of order $n+1$ with entries in $\FF_q$. Hence, we will freely switch
from the matrix notation to the tensor notation and conversely, whenever these
changes of notation will be convenient.
The elements $E\otimes E^*=\{e_i\otimes \eta_j\}_{1\leq i,j\leq n+1}$ form a basis of $V\otimes V^*$. If we map $e_i\otimes\eta_j$ to the elementary matrix $\bee{ij}$ whose only non-zero entry is $1$ in position $(i,j)$, we
see that this gives the isomorphism $\phi\colon V\otimes V^* \rightarrow M_{n+1}(q)$ described by
\[ x\otimes\xi \in V\otimes V^*\mapsto \begin{pmatrix} x_1\\\vdots\\x_{n+1} \end{pmatrix}
  \begin{pmatrix} \xi_1 & \dots & \xi_{n+1} \end{pmatrix}\in M_{n+1}(q),\]
where
\[ x=\begin{pmatrix} x_1 \\ \vdots \\ x_{n+1}\end{pmatrix}\qquad
  \text{ and }\qquad \xi=\begin{pmatrix}
    \xi_1 & \dots & \xi_{n+1} \end{pmatrix}. \]
Thus, the tensor $x \otimes \xi\in V\otimes V^*$ can be
regarded as the column-times-row
product $x \xi$ and  for a matrix $M \in M_{n+1}(q)$, the product $\xi M x$ is the scalar obtained as the product of the row $\xi$ times
$M$ times the column $x$.

Turning to projective spaces, let $\PG(n,q)=\PG(V)$ be the $n$-dimensional projective space defined by $V$.
When we need to distinguish between a non-zero vector $x$ of $V$ and the point
of $\PG(V )$ represented by it, we denote the latter by $[x]$. We extend
this convention to subsets of $V$. If $X \subseteq V \setminus \{0\}$ then $[X] := \{[x] | x \in X\}$. The
same conventions will be adopted for vectors and subsets of $V^*$ and $V \otimes V^*$. In
particular, if $\xi \in V^* \setminus \{0\}$ then $[\xi]$ is the point of $\PG(V^*)$ which corresponds
to the hyperplane $[\ker(\xi)]$ of $\PG(V )$. In the sequel we shall freely take $[\xi]$ as a
name for $[\ker(\xi)]$. Accordingly, if $0^*\in V^*$ is
the null functional, we write $[0^*]=\PG(V)$ (or, more simply, $[0]=\PG(V)$).

 \subsection{Embeddings and hyperplanes of point-line geometries.}\label{prelim}
The approach we follow to construct our codes is to start from a point-line geometry and then embed it into a projective space, so that its image will be a spanning set, hence a projective system, for the ambient projective space.

More precisely, let $\Gamma=(\mathcal{P},\mathcal{L})$ be a point-line geometry with
point set $\mathcal{P}$, line-set $\mathcal{L}$ and incidence given by inclusion. The collinearity graph $G_{\Gamma}$ of $\Gamma$ is
the graph whose vertices are the points $p\in\cP$  and whose edges
are the pairs $(p,q)\in\cP\times\cP$ where $p$ is collinear with $q$.

A \emph{subspace} of a point-line geometry $\Gamma$ is a non-empty subset $X$ of the point set $\mathcal{P}$ of $\Gamma$ such
that, for every line $\ell$ of $\Gamma$, if $|\ell \cap X| > 1$ then $\ell \subseteq X$. A proper subspace $H$ of $\Gamma$ is
said to be a {\it geometric hyperplane}\/ of $\Gamma$ (a \emph{hyperplane} of $\Gamma$ for short) if
for every line $\ell$ of
$\Gamma$, either $\ell \subseteq  H$ or $|\ell  \cap H | = 1$.
By definition, hyperplanes are proper subspaces of $\Gamma$ but in general
they are not necessarily all \emph{maximal} with respect to
inclusion among all the proper subspaces.
An important characterization of \emph{maximal hyperplanes}
is the following, see~\cite[Lemma 4.1.1]{Shult2011}.
\begin{prop}
  \label{cmhp}
  A geometric hyperplane $H$ of a point-line geometry $\Gamma$ is
  a maximal subspace if and only if the collinearity
  graph induced by $G_{\Gamma}$
  on $\cP\setminus H$ is connected.
\end{prop}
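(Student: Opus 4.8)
The plan is to prove both implications by contraposition, reducing everything to a single structural observation: the subspaces $X$ of $\Gamma$ with $H\subseteq X$ are exactly the sets of the form $X=H\cup S$, where $S$ is a union of connected components of the collinearity graph $G_\Gamma$ restricted to $\cP\setminus H$. Granting this correspondence, maximality of $H$ among proper subspaces says precisely that no union of components $S$ satisfies $\emptyset\subsetneq S\subsetneq\cP\setminus H$, and this is exactly the statement that the restricted graph has a single connected component. Thus the whole proposition follows once the correspondence is established, and the final equivalence becomes the elementary combinatorial fact that a vertex set admits a nontrivial component-closed proper subset if and only if its graph is disconnected.

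First I would show that for any subspace $X$ with $H\subseteq X$, the set $X\setminus H$ is a union of connected components; equivalently, no edge of the restricted graph joins a vertex of $X\setminus H$ to a vertex of $(\cP\setminus H)\setminus X$. Suppose $a\in X\setminus H$ is collinear with some $b\in\cP\setminus H$, and let $\ell$ be the line through $a$ and $b$. Since $a\notin H$, the hyperplane property rules out $\ell\subseteq H$ and forces $|\ell\cap H|=1$; call that point $c$. Then $a$ and $c$ are two distinct points of $\ell$ lying in $X$ (as $a\in X$ and $c\in H\subseteq X$), so the subspace property yields $\ell\subseteq X$, whence $b\in X\setminus H$. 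This is the step where the two defining axioms interact: the hyperplane condition manufactures the extra point $c\in H\subseteq X$ that upgrades the single known point $a$ into the two collinear points needed to trigger the subspace axiom.

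For the converse inclusion I would take a union $S$ of connected components and verify that $X:=H\cup S$ is a subspace. Let $\ell$ be a line with $|\ell\cap X|\ge 2$. If $\ell\subseteq H$ we are done; otherwise the hyperplane property gives $\ell\cap H=\{c\}$ for a single $c$, so every other point of $\ell$ lies in $\cP\setminus H$ and these points are pairwise collinear, hence all in one component. Because at most one point of $\ell$ (namely $c$) lies in $H$ while $|\ell\cap X|\ge 2$, at least one point of $\ell\setminus\{c\}$ lies in $X\setminus H=S$; that point's entire component, and therefore all of $\ell\setminus\{c\}$, lies in $S$, giving $\ell\subseteq H\cup S=X$. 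Combined with the previous step this establishes the correspondence and hence the proposition.

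I do not anticipate a serious obstacle: the content lies wholly in the interplay between the subspace axiom and the hyperplane axiom, and the only points demanding care are the bookkeeping of degenerate configurations — lines tangent to $H$, the automatic fact that a geometric hyperplane meets every line, and a single-vertex component being counted as connected. The mild subtlety I would flag is simply to record that the maps $X\mapsto X\setminus H$ and $S\mapsto H\cup S$ are mutually inverse and inclusion-preserving, so that ``no proper subspace strictly between $H$ and $\cP$'' translates faithfully into ``no nontrivial union of components,'' after which connectedness of the restricted collinearity graph is immediate.
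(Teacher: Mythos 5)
Your proof is correct and complete. One point of comparison worth knowing: the paper itself gives no proof of Proposition~\ref{cmhp} --- it is quoted directly from \cite[Lemma 4.1.1]{Shult2011} --- so there is no internal argument to measure yours against; what you have written is essentially the standard proof of this classical fact, packaged slightly more structurally. Instead of proving the two implications separately, you set up the inclusion-preserving bijection $X\mapsto X\setminus H$, $S\mapsto H\cup S$ between subspaces containing $H$ and component-closed subsets of $\cP\setminus H$, and read the equivalence off from it. The two key steps --- using the unique point $c\in\ell\cap H$ to upgrade a single point of $X\setminus H$ on a line $\ell$ into two points of $X$, thereby triggering the subspace axiom, and checking that $H\cup S$ is a subspace whenever $S$ is a union of components --- are exactly the ingredients of the textbook argument (in the usual formulation: if the complement is connected, a subspace strictly containing $H$ swallows every component it meets, hence all of $\cP$; if it is disconnected, $H$ together with one component is a proper subspace strictly above $H$). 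What your packaging buys is a marginally stronger statement for free: the poset of subspaces containing $H$ is isomorphic to the poset of component-closed subsets of $\cP\setminus H$, which also makes transparent that the only degenerate case to worry about, $\cP\setminus H=\emptyset$, is excluded because a geometric hyperplane is by definition a proper subspace.
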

For more details we refer the reader to~\cite{Shult1995} and
\cite{Shult2011}.

A \emph{projective embedding} $\vep\colon \Gamma \rightarrow \Sigma$ in a projective space $\Sigma$ is an injective mapping $\varepsilon$ from the point set  $\mathcal{P}$ of $\Gamma$  to the point set of a projective space $\Sigma$  such that $\varepsilon(\mathcal{P})$ spans $\Sigma$ and it maps lines of $\Gamma$ onto projective lines of $\Sigma$. The \emph{dimension of} $\varepsilon$ is the vector dimension of $\Sigma$.

Given a projective embedding $\vep\colon \Gamma \rightarrow \Sigma$
of $\Gamma$ and a projective hyperplane $W$ of $\Sigma$, the point set $\mathcal{W}:=\vep^{-1}(W)\subseteq\Gamma$ is a geometric hyperplane of $\Gamma$ and $\vep( \vep^{-1}(W)) = W\cap  \vep(\mathcal{P})$.

We say that a geometric hyperplane $\mathcal{W}$ of $\Gamma$
\emph{arises from $\vep$} if $\vep(\mathcal{W})$ spans a projective hyperplane of $\Sigma$ and $\mathcal{W}=\vep^{-1}(W)\subseteq\Gamma$ for some hyperplane $W$ of $\Sigma$.
Note that, in general, it is possible that there are many hyperplanes of $\Gamma$ which do not
arise from any embedding.
\subsection{Minimal codewords and minimal codes.}\label{ssmin}
Let $\cC=\cC(\Omega)$ be a projective $[N,k,d]$-code, where $\Omega\subseteq \PG(k-1,q)$ is the defining projective system.
For any $c=(c_1,\dots,c_N)\in\cC$ the \emph{support} of $c$ is the set
$\mathrm{supp}(c)=\{i: c_i\neq 0\}$.
Write also $\mathrm{supp}^c(c):=\{i: c_i=0\}=\{1,\dots,N\}\setminus
\mathrm{supp}(c)$.

Massey in 1993 \cite{Massey} introduced the notion of {\it minimal codewords}
(or minimal vectors)
in a code,
in order to devise an efficient secret sharing scheme.
Minimal codewords have also numerous applications besides
cryptography; e.g.\ they
are relevant for bounding the complexity of
some decoding algorithms; see~\cite{Ashikhmin1998}.

Minimal codewords and minimal codes are defined as follows.

\begin{definition}
  \label{minc}
  A codeword $c\in\cC$ is \emph{minimal} if
  \[ \forall c'\in\cC:\mathrm{supp}(c')\subseteq\mathrm{supp}(c)
    \Rightarrow\exists\lambda\in\FF_q: c'=\lambda c. \]
  A code is \emph{minimal} if all its codewords are minimal.
\end{definition}

Minimal codes have been extensively investigated,
since they are also amenable to efficient decoding~\cite{Ashikhmin1998},
see also~\cite{Cohen2013,Alfarano2022,
  Alon2024,Bonini2021, survey}.

Ashikhmin and Barg in 1998~\cite{Ashikhmin1998} determined
a well-known and widely used
sufficient condition for a code $\cC$ to be minimal:
\begin{equation}
  \label{ab-bound}
  \frac{w_{\max}}{w_{\min}}<\frac{q}{q-1},
\end{equation}
where $w_{\min}$ and $w_{\max}$ are respectively the minimum and
the maximum weight of the non-null codewords of $\cC$. The aim is
to determine codes which are minimal but do not satisfy
condition~\eqref{ab-bound}; see e.g. \cite{Bonini2021}.

The notion of \emph{strong} or \emph{cutting set} has been introduced in~\cite{D2011}.
\begin{definition}
  Let $\Omega\subseteq\PG(\langle\Omega\rangle)$ be a projective system.
  Then $\Omega$ is a \emph{cutting set} (with respect to the hyperplanes)
  if and only if for any hyperplane $H$ of $\PG(\langle\Omega\rangle)$,
  \[ \langle H\cap\Omega\rangle=H. \]
\end{definition}
In 2021 Alfarano et al.~\cite{Alfarano2022a}
 proved that projective minimal codes and cutting sets with respect
to hyperplanes are equivalent objects (see also~\cite{Tang2021}).

Relying on the notion of cutting sets and on Proposition~\ref{cmhp}, we have proved in~\cite{survey} the following.
\begin{prop}
  Let $\Gamma:=(\cP,\cL)$ be a point line geometry and  $\vep:\Gamma\to\PG(V)$
  a projective embedding.
  The projective code $\cC(\vep(\Gamma))$ is minimal if and only if
  for any hyperplane $H$ of $\PG(V)$,
  the graph induced on the vertices $\cP\setminus\vep^{-1}(H)$ by
  the collinearity graph of $\Gamma$ is connected.
\end{prop}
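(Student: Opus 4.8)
The plan is to route the statement through the Bonini--Borello correspondence between minimal projective codes and cutting sets~\cite{Bonini2021}, together with Proposition~\ref{cmhp}. Since $\vep$ is an embedding, $\Omega:=\vep(\cP)$ spans $\PG(V)$, so $\cC(\vep(\Gamma))$ is minimal if and only if $\Omega$ is a cutting set, i.e.\ $\langle H\cap\Omega\rangle=H$ for every hyperplane $H$ of $\PG(V)$. Fix $H$ and set $\cH:=\vep^{-1}(H)$; by Subsection~\ref{prelim}, $\cH$ is a geometric hyperplane and $H\cap\Omega=\vep(\vep^{-1}(H))=\vep(\cH)$, so the cutting condition at $H$ reads $\langle\vep(\cH)\rangle=H$, i.e.\ $\cH$ arises from $\vep$. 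By Proposition~\ref{cmhp} the graph induced on $\cP\setminus\cH$ is connected exactly when $\cH$ is a maximal subspace. Hence everything reduces to the equivalence ``$\langle\vep(\vep^{-1}(H))\rangle=H$ for all $H$'' $\iff$ ``$\vep^{-1}(H)$ is maximal for all $H$''.

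First I would prove ``all maximal $\Rightarrow$ all spanning'' in the pointwise contrapositive form ``$\vep(\cH)$ non-spanning $\Rightarrow$ $\cH$ non-maximal''. Suppose $\langle\vep(\cH)\rangle=S\subsetneq H$ and choose a codimension-two subspace $U$ with $S\subseteq U\subset H$. Look at the pencil of the $q+1$ hyperplanes through $U$: every complement point $p$ has $\vep(p)\notin H\supseteq U$, hence lies in a unique member $H_{(p)}\ne H$ of the pencil. If $p,p'$ are collinear on a line $\ell$ of $\Gamma$, then $\vep(\ell)$ is a projective line meeting $H$ only at $\vep(\ell\cap\cH)\in\vep(\cH)\subseteq U$; as it also passes through $\vep(p)\notin U$, it lies in $\langle U,\vep(p)\rangle=H_{(p)}$, so $p'$ falls in the same class. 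Thus adjacent complement vertices share a pencil member, and at least two classes are nonempty (otherwise $\Omega$ would lie in a single hyperplane through $U$, against $\langle\Omega\rangle=\PG(V)$); so the complement is disconnected and $\cH$ is non-maximal. This settles the direction ``all complements connected $\Rightarrow$ $\cC$ minimal''.

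The converse, ``$\cC$ minimal $\Rightarrow$ all complements connected'' (equivalently ``cutting $\Rightarrow$ every $\vep^{-1}(H)$ maximal''), is where I expect the real work to be. The decisive subtlety is that this direction is genuinely global rather than pointwise: a geometric hyperplane $\vep^{-1}(H_0)$ can span $H_0$ and yet have a disconnected complement, so the pencil argument above cannot simply be reversed at $H_0$. I would therefore argue by contraposition, starting from a hyperplane $H_0$ whose complement splits into connected components $C_1,\dots,C_r$ with $r\ge 2$. One uses that each $\cH_0\cup C_j$ is a proper subspace of $\Gamma$, and that in the affine chart $\PG(V)\setminus H_0$ every $\vep(C_j)$ lies in a single coset of the span $W_j$ of the directions $\vep(\ell\cap\cH_0)$ carried by the edges internal to $C_j$. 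The target is to exhibit a single hyperplane $H_1$ whose section $\vep(\vep^{-1}(H_1))$ fails to span $H_1$ --- equivalently, a codimension-two subspace admitting a pencil member avoided by $\Omega$ --- which would violate the cutting property and finish the contrapositive.

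The construction of this deficient $H_1$ is the step I expect to be the main obstacle. Because the spans $\langle\vep(\cH_0\cup C_j)\rangle$ may each already equal $\PG(V)$, one cannot obtain $H_1$ merely as a hyperplane through the image of a single component. Instead I would exploit the affine separation of the components: a linear dependence among a direction of some $W_j$ and points lying in two distinct components --- an ``affine collinearity'' not realised by any line of $\Gamma$ --- yields a section of deficient rank. Locating such a dependence in full generality, and organising the $r$ components and their cosets $W_j$ so that it must occur whenever $r\ge 2$, is the delicate point on which the whole converse hinges; once it is carried out, $\Omega$ is not a cutting set, so $\cC$ is not minimal, completing the proof.
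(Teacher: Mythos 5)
The paper itself contains no proof of this proposition: it is quoted from \cite{survey} (a paper in preparation), with only the indication that the argument ``relies on the notion of cutting sets and on Proposition~\ref{cmhp}''. Your framework is exactly that route (the Bonini--Borello reduction of minimality to the cutting property, then Proposition~\ref{cmhp} to translate connectivity into maximality), and your proof of the direction ``all complements connected $\Rightarrow$ minimal'' is correct and complete: the pencil argument through a codimension-two space $U\supseteq\langle\vep(\vep^{-1}(H))\rangle$ does show that a non-spanning hyperplane section forces a disconnected complement. This is, incidentally, the only direction the paper ever uses (it is what feeds Proposition~\ref{c:min}).

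The converse, which you leave as an acknowledged obstacle, is a genuine gap --- and it cannot be closed at this level of generality, because that direction is false as stated. Take $\Gamma=(\cP,\cL)$ with $\cP$ the full point set of $\PG(2,q)$ and $\cL$ the pencil of the $q+1$ lines through a fixed point $p_0$, and let $\vep$ be the identity map into $\PG(2,q)$: this satisfies the paper's definition of a projective embedding (injective, spanning image, lines mapped onto projective lines). The code $\cC(\vep(\Gamma))$ has as columns all points of $\PG(2,q)$, so every nonzero codeword has weight $q^2$; if $\mathrm{supp}(c')\subseteq\mathrm{supp}(c)$ with $c,c'\neq 0$, equality of the support sizes forces equality of supports, hence the two zero loci are the same line and $c'$ is proportional to $c$. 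Thus the code is minimal (equivalently, every line of the plane is spanned by its $q+1$ points of $\vep(\cP)$, so $\vep(\cP)$ is a cutting set). Yet for any hyperplane $H$ through $p_0$, the induced graph on $\cP\setminus\vep^{-1}(H)$ is a disjoint union of $q$ cliques of size $q$, one for each pencil line other than $H$: disconnected. The structural reason your plan cannot succeed is visible here: minimality and the cutting property depend only on the embedded point set $\vep(\cP)$, whereas connectivity of hyperplane complements depends on the line set $\cL$; one may delete lines from $\Gamma$ without altering the code at all, thereby disconnecting complements while preserving minimality. So no construction of a ``deficient'' hyperplane $H_1$ from a disconnected complement can exist in general; the converse requires additional richness hypotheses on $\Gamma$ that neither the statement nor your proposal supplies. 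Your instinct that all the difficulty sits in this direction was exactly right, but the difficulty lies in the statement itself (or in implicit assumptions of \cite{survey}), not in your strategy.
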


 \begin{prop}
  \label{c:min}
  Suppose that $\Gamma=(\cP,\cL)$ is a point-line geometry where every
  geometric hyperplane is a maximal subspace.
  Then the projective code
  $\cC(\vep(\Gamma))$ is minimal, for any projective embedding
  $\vep$ of $\Gamma$.
\end{prop}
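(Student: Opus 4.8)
The plan is to deduce the statement from the two connectivity criteria recalled immediately above, so that essentially no new computation is required; the whole content is a matching-up of hypotheses. I would argue that for \emph{every} projective hyperplane of the ambient space the complementary pointset is connected in the collinearity graph, and then invoke the criterion from~\cite{survey} stating that minimality of $\cC(\vep(\Gamma))$ is equivalent to exactly this connectivity condition.

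Concretely, first I would fix an arbitrary projective hyperplane $W$ of $\PG(V)$ and set $\mathcal{W}:=\vep^{-1}(W)$. As recalled in Subsection~\ref{prelim}, $\mathcal{W}$ is a geometric hyperplane of $\Gamma$; it is a \emph{proper} subset of $\cP$ because $\vep(\cP)$ spans $\PG(V)$ and hence cannot be contained in the hyperplane $W$. By the standing hypothesis, every geometric hyperplane of $\Gamma$ is a maximal subspace, so in particular $\mathcal{W}$ is maximal. Proposition~\ref{cmhp} then tells us that the subgraph of the collinearity graph $G_\Gamma$ induced on $\cP\setminus\mathcal{W}=\cP\setminus\vep^{-1}(W)$ is connected.

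Since $W$ was an arbitrary hyperplane of $\PG(V)$, the connectivity of $\cP\setminus\vep^{-1}(W)$ holds for all such $W$. Applying the criterion from~\cite{survey} recalled above, this is exactly the condition equivalent to the minimality of $\cC(\vep(\Gamma))$, and the conclusion follows. I do not expect any genuine obstacle here: the only point requiring a word of care is checking that every pull-back $\vep^{-1}(W)$ really is a geometric hyperplane (so that the maximality hypothesis applies to it) and is proper (so that its complement is non-empty); both are immediate from the definition of a projective embedding and the spanning property of $\vep(\cP)$. The statement is thus a direct corollary obtained by composing Proposition~\ref{cmhp} with the survey criterion.
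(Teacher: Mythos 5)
Your proof is correct and is exactly the argument the paper intends: Proposition~\ref{c:min} is stated there as an immediate consequence of the equivalence from~\cite{survey} combined with Proposition~\ref{cmhp}, which is precisely the composition you carry out (pull back an arbitrary projective hyperplane $W$ to the geometric hyperplane $\vep^{-1}(W)$, use maximality to get connectivity of the complement, then invoke the survey criterion). Your added remarks on properness and on $\vep^{-1}(W)$ being a geometric hyperplane are the same facts the paper records in Subsection~\ref{prelim}, so there is no divergence in approach.
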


\subsection{The Segre geometry and its natural embedding.}\label{Segre geometry}
Suppose $\PG(V_1)$ and $\PG(V_2)$ are two given projective spaces
having respective point sets $\cP_1$ and $\cP_2$ and
respective line sets
 $\cL_1$ and $\cL_2$.
The \emph{Segre geometry} (of type $(\dim(\PG(V_1)),\dim(\PG(V_2)))$ ) is defined as the point-line geometry  $\Gamma$ having as point set the Cartesian product $\cP_1\times\cP_2$ and as line set the following set:
\[\{ \{p_1\}\times\ell_2\colon p_1\in\cP_1,\ell_2\in\cL_2\}
  \cup
   \{\ell_1\times \{p_2\}\colon \ell_1\in\cL_1, p_2\in\cP_2\};
 \]
 incidence is given by inclusion.
 It is well known that  this geometry, which can be regarded as the
 product $\PG(V_1)\otimes\PG(V_2),$ admits a projective embedding (the Segre embedding) in $\PG(V_1\otimes V_2)$ mapping the point  $([p],[q])\in\cP_1\times\cP_2$ to the projective point
 $[p\otimes q]\in \PG(V_1\otimes V_2)$. This embedding lifts the automorphism group $\PGL(V_1)\times\PGL(V_2)$ of $\Gamma$ to act on $\PG(V_1\otimes V_2)$. The geometric hyperplanes  of the Segre geometries have been fully classified and described;  see~\cite{Hvm24}.

 In this paper we shall be concerned with the case where
 $V_1=V(n+1,\FF_q)$ and $V_2=V_1^*$. Thus, the points of $\Gamma$ are all the  ordered pairs $([p],[\xi])$ where $[p]$ and $[\xi]$ are respectively a point and a hyperplane of $\PG(n,q).$
We will denote by $\vep$ the Segre embedding of $\Gamma$ in $\PG(V\otimes V^*)$, also called the \emph{natural embedding of $\Gamma$.}  Since $V\otimes V^*$ is isomorphic to the vector space $M_{n+1}(q)$ of the square matrices of order $n+1$ with entries in $\FF_q$ (see Subection~\ref{notation}), the Segre embedding is given by
\begin{equation}
  \label{segre embedding}
  \varepsilon \colon \Gamma \rightarrow \PG(M_{n+1}(q)),\,\,\varepsilon (([x], [\xi]))=[x\otimes \xi].
\end{equation}
The linear automorphism group $\PGL(V)\otimes\PGL(V^*)$
of $\Gamma$ acts on the pure tensors of $\PG(M_{n+1}(q))$ as
\[ ([M],[N]):
  [x\otimes\xi] \to [ Mx\otimes \xi N], \,\,\,\forall [M],[N]\in\PGL(n+1,q) \]
and this action extends to all of $\PG(M_{n+1}(q))$ by linearity.
 The pure tensors $x\otimes \xi\in V\otimes V^*$ with $0\not= x\in V$ and $0\not=\xi\in V^*$  yield the matrices of $M_{n+1}(q)$ of rank $1$. With $x$ and $\xi$ as above, let $[x]$ and $[\xi]$ be the point and the hyperplane of $\PG(n,q)$ represented by $x$ and $\xi$. Then the point set of $\Gamma$ is precisely the set $\{ ([x], [\xi]), [x]\in \PG(V), [\xi]\in \PG(V^*)\}$ and
the image of $\Gamma$ under the Segre embedding is
\begin{equation}
  \label{eS}
\Lambda:=\varepsilon (\Gamma)=\{[x\otimes \xi]\colon  [x]\in\PG(V), [\xi]\in\PG(V^*)\},
\end{equation}
 also called the {\it Segre variety of $\PG((n+1)^2-1,q)$}. Accordingly, $\Lambda$ is represented by the set of all $(n+1)$-square matrices of rank $1$.

Regarding $\Lambda$ as a projective system of $\PG(M_{n+1}(q))$, we can consider the linear code $\cC(\Lambda)$ defined by it. This is  called the {\it Segre code}. It is easy to determine the length $N$ and the dimension $k$ of it. Indeed, $N$ is the number of point-hyperplane pairs of $\PG(n,q)$ and $k$ is the dimension of the Segre embedding. The minimum distance as well as the full weight enumerator are also known.
\begin{prop}[See \cite{BGH}]
   $\cC(\Lambda)$ is an $[N ,k ,d]$-code with
   \[N=\frac{(q^{n+1}-1)}{(q-1)} \frac{(q^{n+1}-1)}{(q-1)},\qquad k=(n+1)^2,
     \qquad d=q^{2n}.\]
\end{prop}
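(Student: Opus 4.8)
The plan is to establish the three parameters separately, treating $N$ and $k$ directly and reserving the real work for $d$. For the length, I would observe that a rank-$1$ matrix $x\xi$ determines its column space $[x]$ and its row space $[\xi]$ uniquely, so the map $([x],[\xi])\mapsto[x\otimes\xi]$ is a bijection from $\PG(V)\times\PG(V^*)$ onto $\Lambda$; hence $N=|\PG(V)|\cdot|\PG(V^*)|=\bigl(\tfrac{q^{n+1}-1}{q-1}\bigr)^2$. For the dimension, the elementary matrices $\bee{ij}=\phi(e_i\otimes\eta_j)$ all lie in $\Lambda$ and already form a basis of $M_{n+1}(q)$, so $\Lambda$ spans the whole ambient space and $k=(n+1)^2$.

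For the minimum distance I would invoke~\eqref{min distance 1} and compute $\max_H|\Lambda\cap H|$. Every hyperplane $H$ of $\PG(M_{n+1}(q))$ is the kernel of a functional $M\mapsto\Tr(BM)$ for some nonzero $B\in M_{n+1}(q)$, the trace form being nondegenerate; and since $\Tr(B\,x\xi)=\xi Bx$, one gets $\Lambda\cap H=\{[x\otimes\xi]:\xi Bx=0\}$. The key point is that $|\Lambda\cap H|$ depends only on $r:=\rank(B)$. This can be seen either by using the $\PGL(V)\times\PGL(V^*)$-action to bring $B$ to the canonical form $\mathrm{diag}(I_r,0)$, or, more directly, by a fibre count over $[x]$: fixing $x\neq0$, the equation $\xi Bx=0$ is satisfied by every $\xi\neq0$ when $Bx=0$ (which occurs for $q^{n+1-r}-1$ nonzero $x$) and by a hyperplane's worth of $\xi$, namely $q^n-1$ nonzero ones, when $Bx\neq0$.

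Carrying out this count and projectivising (dividing by $(q-1)^2$), I expect the weight of the codeword attached to $H_B$ to telescope to
\[
N-|\Lambda\cap H_B|=\frac{(q^{n+1}-q^{n+1-r})(q^{n+1}-q^n)}{(q-1)^2}=q^{2n+1-r}\,\frac{q^r-1}{q-1}.
\]
A short monotonicity check (the ratio of consecutive values being $\tfrac{q^{r+1}-1}{q^{r+1}-q}>1$) shows this is strictly increasing in $r$, so the minimum nonzero weight is attained at $r=1$, giving $d=q^{2n}$.

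The main obstacle is purely the bookkeeping in the fibre count: keeping the two cases $Bx=0$ and $Bx\neq0$ apart, correctly converting the affine solution counts into projective ones, and verifying that the resulting expression genuinely collapses to $(q^{n+1}-q^{n+1-r})(q^{n+1}-q^n)/(q-1)^2$. The reduction of $B$ to canonical form via the group action is the conceptual shortcut that makes the rank-dependence transparent and confines the remaining difficulty to a single clean computation.
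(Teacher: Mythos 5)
Your proposal is correct, but note that the paper itself offers no proof of this proposition at all: it is quoted from Beelen--Ghorpade--Hasan \cite{BGH}, where the Segre code is treated as the code of the determinantal variety of rank-$\leq 1$ matrices and the full weight distribution is derived. Your argument is a sound, self-contained reconstruction of that result. The length and dimension parts are fine (the bijection $([x],[\xi])\mapsto[x\otimes\xi]$ holds because a rank-$1$ matrix determines its column and row spaces, and the $\bee{ij}$ lie in $\Lambda$ and span). For the distance, your identification of hyperplanes with functionals $X\mapsto\Tr(BX)$ via nondegeneracy of the trace form, the identity $\Tr(Bx\xi)=\xi Bx$, and the fibre count over $[x]$ (splitting on $Bx=0$ versus $Bx\neq 0$) are all correct, and the numerator does collapse as you predict: indeed you can avoid the telescoping entirely by counting the \emph{complement} directly, since the pairs with $\xi Bx\neq 0$ number exactly $(q^{n+1}-q^{n+1-r})(q^{n+1}-q^{n})$, which gives the weight
\[
w(r)=\frac{(q^{n+1}-q^{n+1-r})(q^{n+1}-q^{n})}{(q-1)^2}=\frac{q^{2n+1}-q^{2n+1-r}}{q-1},
\]
visibly increasing in $r$, so $d=w(1)=q^{2n}$. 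It is worth observing that your computation is the exact analogue, for $\Lambda$, of the paper's own Lemma~\ref{pesi} for $\Lambda_1$: there the authors decompose over $[\xi]\in\PG(V^*)$ and the intersection size is governed by the number of eigenvectors of $M$, whereas for the full Segre variety the relevant invariant degenerates to just the rank of $B$, which is why your count closes in one step.
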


\subsection{The point-hyperplane geometry of a projective space.}
\label{long root geometry}
Let $\bar{\Gamma}$ be the subgeometry  of $\Gamma$ having as points, all the points  $(p,H)$ of $\Gamma$ with the further requirement that $p\in H$.
Two points are collinear in $\bar{\Gamma}$ if and only if they are
collinear in $\Gamma$; explicitly,
the points $(p,H)$ and $(p',H')$ are collinear in $\bar{\Gamma}$ if and only if $p\in H'$ or $p'\in H$.

If $p:=[x]$ and $H:=[\xi]$, then the point $([x], [\xi])$ of $\Gamma$ is a point of  $\bar{\Gamma}$ if and only if $\xi(x)=0$.
The geometry
$\bar{\Gamma}$ is called the
\emph{point-hyperplane geometry of $\PG(V)$} or, also, the
\emph{long root geometry for the special linear group} $\mathrm{SL}(n+1,\FF_q)$. The linear automorphism group of $\bar{\Gamma}$ is
$\PGL(n+1,q)$ and it acts transitively on the points of $\bar{\Gamma}$.

The group $\PGL(n+1,q)$ lifts  through the Segre embedding (see Definition~\eqref{segre embedding}) $\vep$ to a subgroup of the automorphism group
$\PGL(n^2+2n,q)$ of $\PG(M_{n+1}(q))$. In particular, it acts on the
pure tensors as follows
\begin{equation}\label{coniugio}
  g([x],[\xi])\,\,\stackrel{\varepsilon}{\longrightarrow}\,\, [ gx \otimes \xi g^{-1}],\,\,\forall g\in \PGL(n+1,q)
\end{equation}
and this action can be extended to all elements of $\PG(M_{n+1}(q))$ by linearity.
Consequently, $\PGL(n+1,q)$ acts on the
matrix representatives of the elements of $\PG(M_{n+1}(q))$ by conjugation and
its projective orbits correspond to the conjugacy classes of matrices
up to a non-zero proportionality coefficient.
The following is trivial.

\begin{prop}\label{prop rank 1}
 Suppose $x\in V$ and $\xi \in V^*$.  The vector $x\otimes \xi\in V\otimes V^*$, regarded as an $(n+1)$-square matrix of rank $1$,  is null-traced if and only if $\xi(x)=0$. 
\end{prop}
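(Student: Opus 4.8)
The plan is to compute the trace of the rank-one matrix directly from its entries and to recognize the outcome as the scalar $\xi(x)$. Under the isomorphism $\phi\colon V\otimes V^*\to M_{n+1}(q)$ recalled in Subsection~\ref{notation}, the tensor $x\otimes\xi$ is sent to the column-times-row product $x\xi$, whose $(i,j)$-entry is $x_i\xi_j$. First I would simply read off the diagonal of this matrix: its $(i,i)$-entry is $x_i\xi_i$, so that
\[ \Tr(x\otimes\xi)=\sum_{i=1}^{n+1}x_i\xi_i. \]

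The only remaining observation is that this sum is exactly the scalar $\xi(x)=\sum_{i=1}^{n+1}\xi_i x_i$ obtained by evaluating the functional $\xi$ on $x$, that is, the row-times-column product $\xi x$ described in Subsection~\ref{notation}. Hence $\Tr(x\otimes\xi)=\xi(x)$, and the matrix $x\otimes\xi$ is null-traced precisely when $\xi(x)=0$. There is no genuine obstacle here: the whole statement reduces to the elementary identity $\Tr(x\xi)=\xi x$ relating the trace of an outer product to the associated contraction, which is exactly why the result is labelled trivial. I would note only that the rank-one hypothesis plays no role in the computation itself; it is merely the geometric setting (with $x\neq 0$ and $\xi\neq 0^*$) in which the proposition is applied.
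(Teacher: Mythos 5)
Your proof is correct: the identity $\Tr(x\xi)=\sum_{i=1}^{n+1}x_i\xi_i=\xi(x)$ is precisely the computation the paper has in mind when it introduces this proposition with ``The following is trivial'' and omits any proof. Your closing remark that the rank-one hypothesis is not actually used in the computation is also accurate; it is just the setting in which the statement is applied.
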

Denote by $M^0_{n+1}(q)$ the hyperplane of $M_{n+1}(q)$ of null-traced square matrices of order $n+1$, then  by Proposition~\ref{prop rank 1} we have $\vep^{-1}(M^0_{n+1}(q))=\bar{\Gamma}$.
So, according to Section~\ref{prelim}, $\bar{\Gamma}$ is a geometric hyperplane of $\Gamma$ arising from  $\vep$;
in the terminology of~\cite[Lemma 3]{Hvm24}, $\bar{\Gamma}$ is
a so-called \emph{black hyperplane} of $\Gamma$;
all black hyperplanes of $\Gamma$ correspond
to subgeometries of $\Gamma$ isomorphic to $\bar{\Gamma}$, but
these hyperplanes do not arise, in general, from $\vep$.
We point out that $M^0_{n+1}(q)$ is also the module which hosts
the {\it adjoint representation}\/ of the special linear group
$\mathrm{SL}(n+1,\FF_q)$.
\medskip

Let now $\bar{\vep}$ be the projective embedding of $\bar{\Gamma}$ obtained as the restriction ${\vep|}_{\bar{\Gamma}}$ of $ \varepsilon$ to $\bar{\Gamma}$.
 From Proposition~\ref{prop rank 1} we have
 \begin{equation}
   \label{star-1}
   \bar{\vep} \colon \bar{\Gamma} \rightarrow \PG(M^0_{n+1}(q)),\,\,\bar{\vep} (([x], [\xi]))=[x\otimes \xi].
   \end{equation}
  This is a projective embedding of $\bar{\Gamma}$ with dimension $\dim(\bar{\vep})=\dim(M^0_{n+1}(q))=(n+1)^2-1$.
  Define
\begin{equation}
  \label{e5}
\Lambda_1:=\bar{\vep} (\bar{\Gamma})=\{[x\otimes \xi]\colon  [x]\in \PG(V), [\xi]\in \PG(V^*) \,\,{\rm and}\,\,[x]\in [\xi]\}.
\end{equation}

Suppose now that $\FF_q$ admits non-trivial automorphisms and take  $\sigma\in\Aut(\FF_q),$  $\sigma\not=1$.
It is possible to define a \lq twisted version\rq\ $\bar{\vep}_{\sigma}$ of $\bar{\vep}$ as follows (see~\cite{DSSHVM} and \cite{Pasini24}):
\[\bar{\vep}_{\sigma} \colon \Gamma \rightarrow \PG(M_{n+1}(q)),\,\,\bar{\vep}_{\sigma} (([x], [\xi]))=[x^{\sigma}\otimes \xi],\]
where $x^{\sigma}:=({x_i}^{\sigma})_{i=1}^{n+1}$.
The map $\bar{\vep}_{\sigma}$ is again a projective embedding of $\bar{\Gamma}$ with dimension  $\dim(\bar{\vep}_{\sigma})=(n+1)^2$.
Put now
\begin{equation}
  \label{e7}
\Lambda_{\sigma}:=\bar{\vep}_{\sigma}(\bar{\Gamma})=\{[x^\sigma\otimes \xi]\colon  [x]\in\PG(V), [\xi]\in\PG(V^*) \text{ and } [x]\in [\xi]\}.
\end{equation}

In this paper we shall study in detail the properties of the projective
system $\Lambda_1$ since we want to construct a linear code from it.  In the forthcoming paper~\cite{parte2}, which is a direct continuation of the present work, we shall focus on $\Lambda_{\sigma}$ and on the code arising from it.
As we will soon illustrate, many properties of the geometry $\bar{\Gamma}$  play a crucial role in providing information on the structure of the code arising from $\Lambda_{1}$.
In particular, some families of geometric hyperplanes arising from $\bar{\vep}$ will be used to characterize in geometrical terms all the codewords  having minimum weight, second lowest weight and some
codewords having maximum weight.

\subsection{The saturation form}\label{forma saturata}
Let $f \colon  M_{n+1}(q) \times  M_{n+1}(q) \rightarrow \FF_q$ be the
non-degenerate symmetric bilinear form defined as
\begin{equation}\label{sat form}
  f(X, Y ) = \Tr(XY), \,\,\, \forall\,\, X, Y \in  M_{n+1}(q),
\end{equation}
where $XY$ is the usual row-times-column product
and $\Tr(XY)$ is the trace of the matrix $XY$.
So, with $X = (x_{i,j} )_{i,j=1}^{n+1}$ and
$Y = (y_{i,j} )_{i,j=1}^{n+1}$
we have
\[f((x_{i,j} )_{i,j=1}^{n+1}, (y_{i,j} )_{i,j=1}^{n+1}) =\sum_{i,j}x_{i,j}y_{j,i}.\]
Note that this definition does not depend on the choice of the basis of $M_{n+1}(q)$.
The form $f$ is called \emph{the saturation form} of $M_{n+1}(q)$ (see~\cite{Pasini24}).

Denote by $\perp_f$ the orthogonality relation associated to $f$. Since $f$ is nondegenerate, the hyperplanes of $M_{n+1}(q)$ are the orthogonal spaces  $M^{{\perp}_f}=\{X\in M_{n+1}(q)\colon \Tr(XM)=0 \},$ for $M \in  M_{n+1}(q) \setminus\{O\}$ and,
for two matrices $M,N \in M_{n+1}(q) \setminus\{O\}$, we have $M^{{\perp}_f} = N^{{\perp}_f}$ if and only
if $M$ and $N$ are proportional matrices.

By Definition~\eqref{sat form}, it is clear  that $I^{{\perp}_f}= M^0_{n+1}(q)$,
where $I$ is the identity matrix.
Therefore, for
$M \in M_{n+1}(q) \setminus \{O\}$, we have $M^{{\perp}_f} = M^0_{n+1}(q)$ if and only if $M=\lambda I$ with $\lambda\in\FF_q\setminus\{0\}$ i.e. $M$ is a
non-null scalar matrix.
Hence, every hyperplane of $M^0_{n+1}(q)$ can be obtained as $M^0_{n+1}(q)\cap M^{{\perp}_f}= I^{\perp_f}\cap M^{{\perp}_f}$ for a suitable matrix $M\in M_{n+1}(q),\,\, M\not\in \langle I \rangle$.
In terms of projective spaces, for $M\in M_{n+1}(q)\setminus\langle I\rangle$, $[M^{\perp_f} \cap M^0_{n+1}(q)]$ is the projective hyperplane of $[M^0_{n+1}(q)]$ corresponding to the hyperplane $M^{\perp_f} \cap M^0_{n+1}(q)$ of $M^0_{n+1}(q)$.

The next result follows from well-known properties of polarities associated to
non-degenerate reflexive bilinear forms; see~\cite[Proposition 2.1]{Pasini24}.
\begin{prop}\label{Prop perp}
  For $M,N \in M_{n+1}(q) \setminus  \{O\}$, we have $M^{{\perp}_f}\cap M^0_{n+1}(q) = N^{{\perp}_f}\cap M^0_{n+1}(q) $ if and only if $\langle M, I\rangle = \langle N, I\rangle$.
\end{prop}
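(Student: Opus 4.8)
The plan is to reduce the statement to elementary linear algebra about the polarity induced by $f$, exploiting only that $f$ is nondegenerate and that $M^0_{n+1}(q)=I^{\perp_f}$ is itself a hyperplane arising from $\perp_f$. Throughout write $W=M_{n+1}(q)$ and $U=M^0_{n+1}(q)=I^{\perp_f}$, and for $M\in W$ let $\phi_M\in W^*$ be the linear functional $\phi_M(X)=f(X,M)=\Tr(XM)$, so that $M^{\perp_f}=\ker\phi_M$. Since $f$ is nondegenerate, the assignment $M\mapsto\phi_M$ is a linear \emph{isomorphism} $W\to W^*$; this is essentially the only structural input needed.

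First I would record that $U^{\perp_f}=\langle I\rangle$: indeed $(I^{\perp_f})^{\perp_f}=\langle I\rangle$ by nondegeneracy and a dimension count, so a functional $\phi_M$ vanishes on $U$ exactly when $M\in U^{\perp_f}=\langle I\rangle$; equivalently, the annihilator of $U$ inside $W^*$ is the line $\langle\phi_I\rangle$, because $\ker\phi_I=I^{\perp_f}=U$. Next I would introduce the composite $\rho\colon W\to U^*$, $\rho(M)=\phi_M|_{U}$, obtained by following $M\mapsto\phi_M$ with the restriction map $W^*\to U^*$. This restriction is surjective with kernel the annihilator $\langle\phi_I\rangle$, so $\rho$ is surjective with $\ker\rho=\langle I\rangle$, and it therefore induces an isomorphism $\bar\rho\colon W/\langle I\rangle\to U^*$ sending $M+\langle I\rangle$ to $\phi_M|_U$.

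The heart of the argument is the observation that $M^{\perp_f}\cap U=\ker(\phi_M|_U)=\ker\rho(M)$, so the two sections coincide iff $\rho(M)$ and $\rho(N)$ are functionals on $U$ with the same kernel, iff they span the same line $\langle\rho(M)\rangle=\langle\rho(N)\rangle$ in $U^*$ (the standard fact that two functionals share a kernel precisely when proportional, with the convention that the zero functional has kernel all of $U$). Transporting this back through $\bar\rho$, equality of these lines is the same as $\langle \bar M\rangle=\langle \bar N\rangle$ in $W/\langle I\rangle$, where $\bar M=M+\langle I\rangle$, which is equivalent to $\langle M,I\rangle=\langle N,I\rangle$ in $W$. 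I would then spell out this last step case-by-case: if $M,N\in\langle I\rangle$ both images vanish and both spans equal $\langle I\rangle$; if exactly one lies in $\langle I\rangle$ the lines and the spans disagree in tandem; and if neither does, $\langle\bar M\rangle=\langle\bar N\rangle$ says $N=cM+dI$ with $c\neq0$, i.e. $\langle M,I\rangle=\langle N,I\rangle$.

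The argument is essentially bookkeeping once the isomorphism $W/\langle I\rangle\cong U^*$ is in place, so the only genuine care is needed at the boundary between these regimes. The subtle point — which is why I keep $U^{\perp_f}=\langle I\rangle$ explicit — is that the restriction $f|_U$ of the saturation form to $U$ may itself be degenerate: its radical is $U\cap U^{\perp_f}=I^{\perp_f}\cap\langle I\rangle$, which is nonzero exactly when $\Tr(I)=n+1\equiv 0$ in $\FF_q$, i.e. when $\cha\FF_q\mid n+1$. I deliberately avoid working inside $U$ with $f|_U$ for this reason, and instead realize every hyperplane-section as the kernel of a functional coming from the ambient nondegenerate form; this sidesteps the characteristic dichotomy entirely and makes the equivalence uniform in $M,N$, including the scalar-matrix case $M\in\langle I\rangle$ (where $M^{\perp_f}\cap U=U$).
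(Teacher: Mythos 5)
Your proof is correct. Note that the paper itself offers no explicit proof of this proposition: it states that the result follows from well-known properties of polarities associated to non-degenerate reflexive bilinear forms, citing \cite[Proposition 2.1]{Pasini24}. The argument implicit in that citation is the one-line computation $M^{\perp_f}\cap M^0_{n+1}(q)=M^{\perp_f}\cap I^{\perp_f}=\langle M,I\rangle^{\perp_f}$, followed by injectivity of $S\mapsto S^{\perp_f}$ on subspaces, which holds because $(S^{\perp_f})^{\perp_f}=S$ for a nondegenerate symmetric form. Your route reaches the same conclusion by dualizing instead of applying the polarity to the pencil $\langle M,I\rangle$: you build the isomorphism $M_{n+1}(q)/\langle I\rangle\cong \bigl(M^0_{n+1}(q)\bigr)^*$ sending $M+\langle I\rangle$ to $\Tr(\,\cdot\,M)|_{M^0_{n+1}(q)}$, and reduce everything to the fact that two functionals on a fixed space have equal kernels precisely when they are proportional. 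The structural input is identical (nondegeneracy plus reflexivity of $f$), so neither argument is stronger, but yours is fully self-contained, makes the codimension bookkeeping explicit, and flags a genuine pitfall: the restriction $f|_{M^0_{n+1}(q)}$ has nonzero radical exactly when $p\mid n+1$, so any attempt to argue inside $M^0_{n+1}(q)$ with the restricted form would break in that characteristic; both your functional-kernel argument and the polarity one-liner avoid this and are uniform in $q$ and $n$.
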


The orthogonal space of a pure tensor, namely the orthogonal space of a matrix of rank $1,$  admits an easy description. Indeed,
\begin{prop}\label{prop}
Let $x \in V \setminus  \{O\}$, $\xi \in V^* \setminus  \{O\}$  and $M \in  M_{n+1}(q)$. Then
$x \otimes  \xi \in M^{\perp_f}$ if and only if $\xi M x = 0$.
\end{prop}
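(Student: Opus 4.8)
The plan is to unwind the definition of the orthogonality relation $\perp_f$ and reduce the membership condition to a single trace computation. By the description of hyperplanes given via the saturation form, $x\otimes\xi\in M^{\perp_f}$ means precisely that $f(x\otimes\xi,M)=\Tr((x\otimes\xi)M)=0$; so the whole statement comes down to evaluating $\Tr((x\otimes\xi)M)$ and recognizing it as $\xi M x$.

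First I would invoke the identification fixed in Subsection~\ref{notation} of the pure tensor $x\otimes\xi$ with the rank-$1$ matrix realized as the column-times-row product $x\xi$. Under this identification $(x\otimes\xi)M=x(\xi M)$, where $\xi M$ is a $1\times(n+1)$ row vector; thus $x(\xi M)$ is again an $(n+1)\times(n+1)$ matrix, written as a column-times-row product.

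The key step is to apply the cyclic invariance of the trace. Setting $A=x$ (an $(n+1)\times 1$ matrix) and $B=\xi M$ (a $1\times(n+1)$ matrix), we have $\Tr(AB)=\Tr(BA)$, and here $BA=(\xi M)x=\xi M x$ is a $1\times 1$ matrix, i.e.\ the scalar $\xi M x$ in the notation fixed in Subsection~\ref{notation}. Hence $\Tr((x\otimes\xi)M)=\xi M x$, and combining this with the characterization $x\otimes\xi\in M^{\perp_f}\iff\Tr((x\otimes\xi)M)=0$ yields the asserted equivalence at once.

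I do not expect any genuine obstacle here; the statement is essentially a reformulation of the cyclic property of the trace. The only point requiring care is bookkeeping of the shapes of the factors, so that the rearrangement $\Tr(AB)=\Tr(BA)$ is legitimate and the resulting $1\times 1$ product is correctly read as the scalar $\xi M x$.
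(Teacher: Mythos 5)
Your proof is correct and takes essentially the same route as the paper: unwind the definition of $M^{\perp_f}$ via the saturation form, use associativity to rewrite $\Tr((x\xi)M)$ as $\Tr(x(\xi M))$, and identify the trace of this column-times-row product with the scalar $\xi M x$. The only difference is cosmetic: where the paper cites Proposition~\ref{prop rank 1} for the identity $\Tr(x\eta)=\eta(x)$, you re-derive that identity directly from the cyclic property of the trace, with correct attention to the rectangular shapes involved.
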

\begin{proof}
  By definition, $M^{\perp_f}=\{X\in M_{n+1}(q)\colon \Tr(XM)=0\}$. Hence $x \otimes  \xi \in M^{\perp_f}$ if and only if $\Tr((x\xi) M)=0$ if and only if $\Tr(x(\xi M))=0$. By Proposition~\ref{prop rank 1}, this is equivalent to $(\xi M)(x)=0$. Turning to projective spaces, this means that the projective point $[x]$ belongs to the hyperplane $[\xi M]$ if $M\not=0$ or that it (trivially)
 belongs to the space $\PG(V)=[0^*]$ if $M=0$. \end{proof}

In more geometrical terms, by Proposition~\ref{prop} we have that the point $[x \otimes  \xi]$ is contained in $[M^{\perp_f}]$ if and only if the point $[x]$ is contained in the hyperplane $[\xi M]$.

\subsection{Hyperplanes of $\bar{\Gamma}$}\label{hyperplanes}
In this section we will briefly recall from~\cite{Pas24} and~\cite{Pasini24} the most significant results related to the hyperplanes of $\bar{\Gamma}$ arising from the embedding $\bar{\vep}$.

Take $M\in M_{n+1}(q)\setminus\langle I\rangle$ and let $\bar{\vep}$ be the Segre embedding of $\bar{\Gamma},$ as defined in \eqref{star-1}. Then
\[{\cH}_M:=\bar{\vep}^{-1}([M^{\perp_f} \cap M^0_{n+1}(q)])
  =\bar{\vep}^{-1}(\{ [X]\in \PG(M^0_{n+1}(q))\colon \Tr(XM)=0 \})\]
is a geometric hyperplane of $\bar{\Gamma}$ called a \emph{hyperplane of plain type}, as defined in~\cite{Pasini24}.
By Proposition~\ref{Prop perp},  given any two matrices $M$ and $M'$ we have $\cH_M=\cH_{M'}$
if and only if $M=\alpha M'+\beta I$ with $(\alpha,\beta)\neq (0, 0)$.

Recall now the definition of hyperplanes of $\bar{\Gamma}$ arising from
an embedding from Section~\ref{prelim}.
\begin{prop}\cite[Corollary 1.7]{Pasini24}
  The hyperplanes of $\bar{\Gamma}$ which arise from the Segre embedding $\bar{\vep}$ are precisely those of plain type.
\end{prop}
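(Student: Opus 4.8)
The plan is to establish the two inclusions separately. The direction ``arises $\Rightarrow$ plain type'' is essentially immediate from the description of the hyperplanes of $M^0_{n+1}(q)$ obtained via the saturation form: if a geometric hyperplane $\mathcal{W}$ of $\bar{\Gamma}$ arises from $\bar{\vep}$, then by definition $\mathcal{W}=\bar{\vep}^{-1}(W)$ for some projective hyperplane $W$ of $\PG(M^0_{n+1}(q))$; since every hyperplane of $M^0_{n+1}(q)$ has the form $M^{\perp_f}\cap M^0_{n+1}(q)$ for a suitable $M\notin\langle I\rangle$, we get $W=[M^{\perp_f}\cap M^0_{n+1}(q)]$ and hence $\mathcal{W}=\cH_M$ is of plain type. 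So the whole content of the statement lies in the reverse inclusion.

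For ``plain type $\Rightarrow$ arises'', fix $M\notin\langle I\rangle$ and set $W=[M^{\perp_f}\cap M^0_{n+1}(q)]$, a projective hyperplane of the ambient space. Since $\bar{\vep}(\cH_M)\subseteq W$ by construction, the definition of ``arising from $\bar{\vep}$'' demands precisely that $\bar{\vep}(\cH_M)$ \emph{span} $W$; thus I must show $\langle\bar{\vep}(\cH_M)\rangle=M^{\perp_f}\cap M^0_{n+1}(q)$. I would argue dually. Every linear functional on $M^0_{n+1}(q)$ is the restriction of $X\mapsto\Tr(XN)=f(X,N)$ for some matrix $N$, determined only modulo $\langle I\rangle$ because $\Tr(\cdot\,I)$ vanishes on $M^0_{n+1}(q)$. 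Such a functional annihilates $\bar{\vep}(\cH_M)$ exactly when $\xi N x=0$ for every pure tensor $x\otimes\xi$ with $\xi x=0$ and $\xi M x=0$, using $\Tr((x\xi)N)=\xi N x$. Fixing $x$, the covectors $\xi$ with $\xi x=0$ and $\xi M x=0$ are exactly those annihilating $\langle x,Mx\rangle$, so by the double-annihilator identity the requirement ``$\xi N x=0$ for all such $\xi$'' is equivalent to $Nx\in\langle x,Mx\rangle$. Hence the functionals killing $\bar{\vep}(\cH_M)$ correspond to the matrices $N$ satisfying
\[ Nx\in\langle x,Mx\rangle\qquad\text{for all } x\in V. \]

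The crux is therefore the purely linear-algebraic claim: \emph{if $M$ is non-scalar and $Nx\in\langle x,Mx\rangle$ for every $x\in V$, then $N\in\langle I,M\rangle$.} Granting this, every functional annihilating $\bar{\vep}(\cH_M)$ restricts on $M^0_{n+1}(q)$ to $X\mapsto\Tr(X(\alpha I+\beta M))=\beta\,\Tr(XM)$, so the annihilator of $\langle\bar{\vep}(\cH_M)\rangle$ inside the dual of $M^0_{n+1}(q)$ is one-dimensional, which forces $\langle\bar{\vep}(\cH_M)\rangle$ to be the full hyperplane $M^{\perp_f}\cap M^0_{n+1}(q)$, as required. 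To prove the claim I would compare coefficients: for each $x$ with $x,Mx$ independent write $Nx=\alpha_x x+\beta_x Mx$; whenever $\{x,Mx,y,My\}$ is linearly independent one checks that $x+y$ again lies in this ``regular'' set, and expanding $N(x+y)$ both directly and as $Nx+Ny$, then matching coefficients on the independent set $\{x,Mx,y,My\}$, yields $(\alpha_x,\beta_x)=(\alpha_y,\beta_y)$. Putting $N'=N-\alpha I-\beta M$ for the common value gives $N'x=0$ on all such regular $x$, and a spanning argument then forces $N'=0$.

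I expect the main obstacle to be exactly this final spanning/connectivity step. Over a finite field one cannot invoke Zariski density, so one must verify that the regular vectors $x$ (those with $x,Mx$ independent and in general position relative to a fixed regular vector) actually span $V$, handling separately the low-dimensional and small-$q$ degeneracies where naive connectivity may fail. A clean way to sidestep this is to exploit the $\PGL(n+1,q)$-action: by Proposition~\ref{Prop perp} the hyperplane $\cH_M$ depends only on the line $\langle M,I\rangle$, and conjugation permutes these lines according to the similarity class of $M$ modulo scalars, so it suffices to verify the claim for one representative $M$ in each rational canonical form, where $Nx\in\langle x,Mx\rangle$ can be solved explicitly on a cyclic basis. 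Either route completes the proof.
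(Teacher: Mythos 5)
Your reduction is sound, and it correctly isolates what this statement amounts to: the direction ``arises $\Rightarrow$ plain type'' is immediate from the description of the hyperplanes of $M^0_{n+1}(q)$ via the saturation form, and your dualization correctly shows that ``plain type $\Rightarrow$ arises'' is \emph{equivalent} to the linear-algebra claim that, for non-scalar $M$, the solution space of ``$Nx\in\langle x,Mx\rangle$ for all $x\in V$'' is exactly $\langle I,M\rangle$. (For context: the paper offers no proof of this proposition at all, it quotes it from \cite{Pasini24}, and that claim is the real content there.) The genuine gap is your proof of that claim. The coefficient-matching step needs a pair $x,y$ with $\{x,Mx,y,My\}$ linearly independent, and such pairs frequently do not exist. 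For $n=2$ (so $\dim V=3$) four vectors are never independent, so your argument is vacuous in the first nontrivial case. Moreover, in any dimension, if $\rank(M-\lambda I)=1$ for some $\lambda\in\FF_q$, then for \emph{every} pair $x,y$
\[ \langle x,Mx,y,My\rangle=\langle x,(M-\lambda I)x,y,(M-\lambda I)y\rangle\subseteq\langle x,y\rangle+\mathrm{Im}(M-\lambda I), \]
which has dimension at most $3$. This family includes all matrices of rank $1$, i.e.\ precisely the matrices defining the quasi-singular and singular hyperplanes at the heart of the paper's main theorems. In all these cases your ``general position'' relation is empty, no common value $(\alpha,\beta)$ is ever produced, and the concluding sentence ``a spanning argument then forces $N'=0$'' has nothing to act on. These are not the ``small-$q$ and low-dimensional degeneracies'' you allude to: they occur for every $q$ and every $n$.

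The escape route you propose --- conjugate $M$ into rational canonical form and ``solve explicitly on a cyclic basis'' --- is a legitimate strategy, but it is not carried out, and it would have to carry the entire burden of proof exactly in the cases above; as written it is an assertion, not a proof. Note also that the claim is genuinely false when $\dim V=2$ (if the minimal polynomial of $M$ is an irreducible quadratic, then $\langle x,Mx\rangle=V$ for all $x\neq 0$ and the condition on $N$ is vacuous), so any correct argument must use $n\geq 2$ somewhere, and your sketch never identifies where. The missing cases can be repaired, but by a different mechanism than general position: every eigenvector $u$ of $M$ satisfies $Nu\in\langle u\rangle$, hence $N$ is scalar on each eigenspace of $M$; when $\rank(M-\lambda I)=1$ this constraint lives on the hyperplane $\ker(M-\lambda I)$ of $V$, and after subtracting a suitable multiple of $I$ from $N$, a short computation with vectors $x\notin\ker(M-\lambda I)$ and $u\in\ker(M-\lambda I)\setminus\langle Mx\rangle$ forces $N\in\langle I,M\rangle$. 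The case $n=2$ and the connectivity of your general-position graph for the remaining matrices $M$ likewise need separate, explicit treatment before the proof can be considered complete.
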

Take $p\in\PG(V)$, $A\in\PG(V^*)$. Put ${\mathcal{M}}_p:=\{ (p, H) : p\in H \}$
and ${\mathcal{M}}_A:=\{ (x, A) : x\in A \}$. Then,
\begin{equation}\label{quasi-singular}
 \cH_{p,A}:=\{ (x,H) \colon (x,H) \text{ collinear (in $\bar{\Gamma}$) with a point of }
  \mathcal{M}_p\cup\mathcal{M}_A \}
  \end{equation}
is a geometric hyperplane of $\bar{\Gamma}$, called the \emph{quasi-singular hyperplane}
defined by $(p,A)$. If $p\in A$, then $\cH_{p,A}$ is called \emph{the singular
  hyperplane with deepest point $(p,A)$} and consists of all points of
$\bar{\Gamma}$ not at maximal distance from $(p,A)$ in the collinearity
graph of $\bar{\Gamma}$.

\begin{prop}\label{cardinalita' iperpiani quasi singolari}
The following hold.
\begin{enumerate}
	\item The cardinality of the singular hyperplanes of $\bar{\Gamma}$ is
          \begin{equation}\label{cc1}
            \frac{(q^{n+1}-1)(q^{n-1}-1)}{(q-1)^2}+\frac{q^n-1}{q-1} q^{n-1}.
          \end{equation}
	\item
	The cardinality of the quasi-singular but not singular hyperplanes of $\bar{\Gamma}$ is
	\begin{equation}\label{cc2}
          \frac{(q^{n+1}-1)(q^{n-1}-1)}{(q-1)^2}+(\frac{q^n-1}{q-1} +1)q^{n-1}.
        \end{equation}
\end{enumerate}	
\end{prop}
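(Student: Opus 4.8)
The plan is to compute $|\cH_{p,A}|$ by first pinning down its point set explicitly and then counting its complement in $\bar{\Gamma}$. First I would show that
\[\cH_{p,A}=\{(x,H)\in\bar{\Gamma}:\ p\in H\ \text{or}\ x\in A\}.\]
To see this, recall that collinearity in $\bar{\Gamma}$ is inherited from the Segre geometry $\Gamma$, so two of its points are collinear exactly when they share their first or their second coordinate. Hence a flag $(x,H)$ is collinear with some $(p,K)\in\mathcal{M}_p=\{(p,K):p\in K\}$ iff $x=p$ (sharing the point) or $H=K$ for some hyperplane $K\ni p$ (sharing the hyperplane). In the first case the flag condition $x\in H$ forces $p=x\in H$, and in the second it forces $p\in K=H$; conversely, if $p\in H$ then $(p,H)\in\mathcal{M}_p$ shares the hyperplane $H$ with $(x,H)$. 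Thus $(x,H)$ is collinear with $\mathcal{M}_p$ iff $p\in H$, and symmetrically it is collinear with $\mathcal{M}_A=\{(x',A):x'\in A\}$ iff $x\in A$. Taking the union yields the displayed description, so that
\[\bar{\Gamma}\setminus\cH_{p,A}=\{(x,H)\in\bar{\Gamma}:\ x\notin A\ \text{and}\ p\notin H\}.\]

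Next I would count this complement by first choosing the point $x$ and then the hyperplane $H$. The points of $\PG(V)$ off the hyperplane $A$ number $q^{n}$. For such an $x\neq p$, the hyperplanes $H$ through $x$ number $(q^{n}-1)/(q-1)$, of which $(q^{n-1}-1)/(q-1)$ also contain $p$; hence exactly $q^{n-1}$ hyperplanes pass through $x$ while avoiding $p$. The count now splits on whether $p\in A$. If $p\notin A$ (the quasi-singular, non-singular case) then $p$ is one of the $q^{n}$ admissible points $x$ and contributes $0$, since no hyperplane through $p$ avoids $p$, while each of the remaining $q^{n}-1$ points contributes $q^{n-1}$; the complement therefore has size $(q^{n}-1)q^{n-1}=q^{2n-1}-q^{n-1}$. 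If $p\in A$ (the singular case) then every admissible $x$ is distinct from $p$ and contributes $q^{n-1}$, so the complement has size $q^{n}\cdot q^{n-1}=q^{2n-1}$.

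Finally I would subtract these from the total number of points $N_1=(q^{n+1}-1)(q^{n}-1)/(q-1)^2$ of $\bar{\Gamma}$ and simplify: $N_1-q^{2n-1}$ collapses to \eqref{cc1} and $N_1-(q^{2n-1}-q^{n-1})$ to \eqref{cc2}, a routine rearrangement using $q^{n}-q^{n-1}=q^{n-1}(q-1)$. The only point demanding care is the description of $\cH_{p,A}$ in the first step, together with the clean separation of the two cases $p\in A$ and $p\notin A$; once the complement is identified as $\{(x,H):x\notin A,\ p\notin H\}$, everything else is elementary enumeration.
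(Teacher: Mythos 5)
Your proof is correct and takes essentially the same approach as the paper: both identify the complement of $\cH_{p,A}$ as $\{(x,H)\in\bar{\Gamma}:\ x\notin A,\ p\notin H\}$, count it by the same point-then-hyperplane enumeration split into the cases $p\in A$ and $p\notin A$ (getting $q^{2n-1}$ and $q^{2n-1}-q^{n-1}$ respectively), and subtract from $N_1$. Your only addition is the preliminary explicit description of $\cH_{p,A}$ via the coordinate-sharing collinearity inherited from $\Gamma$ — which is exactly the relation the paper's own count implicitly uses — so the argument is sound.
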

\begin{proof}
Suppose $\cH_{p,A}$
  with $p\in\PG(V)$ and  $A\in\PG(V^*)$ is a quasi-singular hyperplane  of $\bar{\Gamma}.$
  In order to determine the cardinality $|\cH_{p,A}|$ of $\cH_{p,A}$, we will first count the number $|C(\cH_{p,A})|$
  of points $(r,S)\in \bar{\Gamma}$ such that  $(r,S)\not\in \cH_{p,A}.$ By Definition~\ref{quasi-singular}, $|C(\cH_{p,A})|$ is precisely the number of points of $\bar{\Gamma}$ not collinear with any point in $\cM_p \cup \cM_A.$ Then, $|\cH_{p,A}|$ is the difference between the number of points of $\bar{\Gamma}$ and $|C(\cH_{p,A})|$, that is  \[ |\cH_{p,A}|=\frac{(q^{n+1}-1)}{q-1} \frac{(q^{n}-1)}{q-1} -|C(\cH_{p,A})|.\]

Suppose $p\not\in A$, i.e. $\cH_{p,A}$ is a quasi-singular, non singular hyperplane of $\bar{\Gamma}$. We have that $(r,S)$ is not collinear with any point in $\cM_p \cup \cM_A$ if and only if $r\not\in A$,
  $p\not\in S$ and $r\not=p.$
  More in detail, the number of points $r\in \PG(V)$ different from $p$ and not contained in $A$ is $\frac{(q^{n+1}-1)-(q^{n}-1)}{q-1} -1=q^n-1$ and the number of hyperplanes $S\in \PG(V^*)$ through the point $r$ and not containing $p$ is $\frac{(q^{n}-1)-(q^{n-1}-1)}{q-1}=q^{n-1}.$
  So, $|C(\cH_{p,A})|=q^{2n-1}-q^{n-1}.$

Now suppose $p\in A$, i.e. $\cH_{p,A}$ is a singular hyperplane of $\bar{\Gamma}$. We have that $(r,S)$ is not collinear with any point in $\cM_p \cup \cM_A$ if and only if $r\not\in A$ and   $p\not\in S.$
So, $|C(\cH_{p,A})|= (\frac{(q^{n+1}-1)}{q-1} - \frac{(q^{n}-1)}{q-1})(\frac{(q^{n}-1)}{q-1} - \frac{(q^{n-1}-1)}{q-1})  =q^{2n-1}.$
The claim follows.
  \end{proof}

The following theorem describes the quasi-singular hyperplanes
of $\bar{\Gamma}$.
\begin{prop}\cite[\S 1.3]{Pasini24}
\label{qsh}
  Take $[x]\in\PG(V)$ and $[\xi]\in\PG(V^*)$.
  The quasi-singular hyperplane $\cH_{[x],[\xi]}$
  is the  hyperplane of
  plain type $\cH_M$ where $M=x\xi$.
\end{prop}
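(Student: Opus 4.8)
The plan is to prove the equality of pointsets $\cH_{[x],[\xi]}=\cH_M$ with $M=x\xi$ by reducing both sides to one and the same explicit condition on a generic point $([y],[\tau])$ of $\bar\Gamma$ (so that $\tau(y)=0$), keeping $p=[x]$ and $A=[\xi]$ fixed.

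First I would determine membership in the quasi-singular hyperplane. The proof of Proposition~\ref{cardinalita' iperpiani quasi singolari} already records exactly which points fail to be collinear with $\cM_p\cup\cM_A$: a point $([y],[\tau])$ lies \emph{outside} $\cH_{[x],[\xi]}$ precisely when $[x]\notin[\tau]$, $[y]\notin[\xi]$ and $[y]\neq[x]$ (the last condition being automatic in the singular case $p\in A$, so that both cases are handled uniformly). Taking complements inside $\bar\Gamma$, a point belongs to $\cH_{[x],[\xi]}$ iff $[x]\in[\tau]$ or $[y]\in[\xi]$ or $[y]=[x]$; and since $\tau(y)=0$ for every point of $\bar\Gamma$, the alternative $[y]=[x]$ already forces $\tau(x)=0$, i.e.\ $[x]\in[\tau]$. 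This yields the clean description
\[ \cH_{[x],[\xi]}=\{\,([y],[\tau])\in\bar\Gamma \colon \tau(x)=0 \text{ or } \xi(y)=0\,\}. \]

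Next I would compute membership in the plain-type hyperplane $\cH_M$ with $M=x\xi$. By definition $([y],[\tau])\in\cH_M$ iff $y\otimes\tau\in M^{\perp_f}$, which by Proposition~\ref{prop} (applied to $y$ and $\tau$) is equivalent to $\tau M y=0$. Here the rank-one shape of $M$ is what makes everything transparent: associativity of the row-by-matrix-by-column product gives the factorization $\tau M y=\tau(x\xi)y=(\tau x)(\xi y)=\tau(x)\,\xi(y)$, a product of two scalars. Hence $\tau M y=0$ iff $\tau(x)=0$ or $\xi(y)=0$, which is verbatim the condition found in the previous paragraph.

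Comparing the two descriptions gives $\cH_{[x],[\xi]}=\cH_M$. It then only remains to certify that $\cH_M$ really is of plain type, i.e.\ that $M=x\xi\notin\langle I\rangle$: as $x\neq 0$ and $\xi\neq 0$, the matrix $M$ has rank $1$, whereas every non-zero scalar matrix has rank $n+1>1$, so $M\notin\langle I\rangle$ and Proposition~\ref{Prop perp} guarantees that $\cH_M$ is a well-defined hyperplane of plain type. I expect the only genuinely delicate step to be the first one---faithfully translating the combinatorial collinearity description of $\cH_{[x],[\xi]}$ into the algebraic conditions $\tau(x)=0$ and $\xi(y)=0$, and checking that the coincidence $[y]=[x]$ is absorbed correctly---whereas the second step is just the one-line identity $\tau M y=\tau(x)\,\xi(y)$ supplied by the rank-one factorization of $M$.
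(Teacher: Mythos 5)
Your argument is correct, and it is worth noting that the paper itself gives no proof of this proposition: it is imported verbatim from Pasini's work (\cite[\S 1.3]{Pasini24}), so there is no internal argument to compare against. Your verification is a sound, self-contained replacement built entirely from the paper's own tools: the description of the complement of $\cH_{p,A}$ recorded in the proof of Proposition~\ref{cardinalita' iperpiani quasi singolari}, the criterion of Proposition~\ref{prop}, and the rank-one factorization $\tau M y=(\tau x)(\xi y)=\tau(x)\,\xi(y)$, which is indeed the crux. Your absorption of the alternative $[y]=[x]$ into $\tau(x)=0$ (using $\tau(y)=0$) is exactly what makes the singular and non-singular cases uniform, and your final check that $M=x\xi\notin\langle I\rangle$ is the right way to certify that $\cH_M$ is a legitimate hyperplane of plain type. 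One caution if you want the first step to be fully self-contained: derive the membership criterion ``$([y],[\tau])\in\cH_{[x],[\xi]}$ iff $[y]=[x]$, $\tau(x)=0$ or $\xi(y)=0$'' directly from the line structure of $\bar{\Gamma}$ (two points of $\bar{\Gamma}$ are collinear precisely when they share their point-component or their hyperplane-component), rather than quoting the proof of Proposition~\ref{cardinalita' iperpiani quasi singolari}, which asserts it without derivation. This matters because the ``explicit'' collinearity condition stated in Subsection~\ref{long root geometry} ($p\in H'$ or $p'\in H$) cannot be the one intended: under that relation every point of $\bar{\Gamma}$ would be collinear with some point of $\cM_p$ as soon as $n\ge 2$, so $\cH_{p,A}$ would fail to be a proper subspace. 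Your proof implicitly uses the correct relation, so no repair is needed, but spelling this out would close the only loose end.
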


By Proposition~\ref{qsh},  there is a one-to-one correspondence between
  quasi-singular hyperplanes of $\bar{\Gamma}$
  and proportionality classes of matrices of rank $1$.

In particular,
all quasi-singular hyperplanes are hyperplanes of plain type arising
from matrices $M$ of rank $1$ and, conversely, for each matrix $M\in M_{n+1}(q)$
of rank $1$ the hyperplane of plain type ${\cH}_M$ is quasi-singular.

Suppose $S$ is a \emph{line spread} of $\PG(V)$, that is a family of lines of $\PG(V)$ such that every point of $\PG(V)$ belongs to exactly one member of $S$.
We say that $S$ \emph{admits a dual} if there exists a line spread $S^*$ of
$\PG(V^*)$ such that for every line $\ell^*\in S^*$ (i.e.\ for every $2$-codimensional subspace of $\PG(V)$),  the members of $S$ contained in $\ell^*$,
form a line spread of $\ell^*$; see~\cite{Pasini24}.
A line spread $S$ admits at most one dual spread $S^*$, see~\cite[Lemma 1.9]{Pasini24}.
In~\cite{Pasini24} it is  proved that if a line spread $S$ admits a dual $S^*$, then 
it is possible to define a geometric hyperplane $\cH_{(S,S^*)}$ of $\bar{\Gamma}$ as follows
\begin{equation}\label{spread-type hyperplane}
  \cH_{(S,S^*)}:=\{(p,H)\in \bar{\Gamma}\colon\,\,H\supset \ell_p\}=\{(p,H)\in \bar{\Gamma}\colon\,\,p\in L_H\}
\end{equation}
where $\ell_p$ is the unique line of $S$ through $p$ and $L_H\in S^*$ is the unique $2$-codimensional subspace of $\PG(V)$ contained in $H$.
The hyperplane $\cH_{(S,S^*)}$ is called a {\it spread-type}\/ hyperplane of $\bar{\Gamma}$.

\begin{prop}~\cite[Theorem 1.14]{Pasini24}\label{spread-type 2}
 A hyperplane ${\cH}_M$ of plain type is of spread-type if and only if $M$ admits no eigenvalue in $\FF_q$ and $M^2 x\in \langle x,Mx\rangle$ for every non-zero vector $x\in V$.
\end{prop}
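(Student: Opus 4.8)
The plan is to reduce both notions of hyperplane to a single algebraic incidence condition and then translate the geometric spread axioms into properties of $M$. First I would record the basic dictionary: by Proposition~\ref{prop}, a point $([x],[\xi])$ of $\bar\Gamma$ (so with $\xi x=0$) lies in the plain-type hyperplane $\cH_M$ precisely when $\xi M x=0$. On the other hand, if $S$ denotes the family $S=\{[\langle x,Mx\rangle]\colon 0\neq x\in V\}$ of $M$-invariant $2$-spaces, then $H=[\xi]\supset \ell_{[x]}=[\langle x,Mx\rangle]$ means exactly $\xi x=0$ and $\xi Mx=0$; given $\xi x=0$ this is again just $\xi Mx=0$. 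Thus, whenever $S$ is a genuine line spread admitting a dual, the two membership conditions coincide and $\cH_M=\cH_{(S,S^*)}$. The whole statement therefore reduces to: the $M$-invariant $2$-spaces $\langle x,Mx\rangle$ form a line spread with a dual if and only if $M$ has no eigenvalue in $\FF_q$ and $M^2x\in\langle x,Mx\rangle$ for every $x\neq 0$.

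For the direction \emph{spread-type} $\Rightarrow$ \emph{conditions}, assume $\cH_M=\cH_{(S,S^*)}$. Fixing $[x]$ and letting $[\xi]$ range over the hyperplanes through $[x]$, the dictionary shows that the hyperplanes $H$ with $([x],H)\in\cH_M$ are exactly those containing $\langle x,Mx\rangle$, while on the spread side they are exactly those containing the spread line $\ell_{[x]}$. Since a subspace is the intersection of all hyperplanes containing it, this forces $\langle x,Mx\rangle=\ell_{[x]}$; in particular $\langle x,Mx\rangle$ is $2$-dimensional for every $x\neq 0$, so $M$ has no eigenvector and hence no eigenvalue in $\FF_q$. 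Moreover, because $S$ partitions the points, $Mx\in\ell_{[x]}$ lies on the same spread line, so $\ell_{[Mx]}=\ell_{[x]}=\langle x,Mx\rangle$; applying the identity $\langle y,My\rangle=\ell_{[y]}$ to $y=Mx$ gives $M^2x=M(Mx)\in\langle Mx,M^2x\rangle=\ell_{[Mx]}=\langle x,Mx\rangle$, which is the second condition.

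For the converse I would exploit the field structure hidden in the two conditions. The hypothesis $M^2x\in\langle x,Mx\rangle$ for all $x$ says that every $M$-cyclic subspace has dimension at most $2$, so the minimal polynomial $\mu(t)$ of $M$ has degree at most $2$; since $M$ has no eigenvalue in $\FF_q$, the polynomial $\mu$ is an irreducible quadratic and $K:=\FF_q[M]\cong\FF_{q^2}$ is a field. Then $V$ (and likewise $V^*$, on which $M$ acts by $\xi\mapsto\xi M$ with the same minimal polynomial) becomes a $K$-vector space, forcing $n+1$ even, and $\langle x,Mx\rangle=Kx$ is precisely the $K$-line through $x$; the $K$-lines partition the nonzero vectors, so $S$ is the (Desarguesian) line spread of $\PG(V)$ and the $K$-lines of $V^*$ give a line spread $S^*$ of $\PG(V^*)$. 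To check that $S^*$ is a dual of $S$, I would take $[\xi]\in\PG(V^*)$ and consider $U:=\ker\xi\cap\ker(\xi M)$: writing $M^2=\alpha M+\beta I$ from $\mu$, one verifies that $x\in U$ implies $Mx\in U$, so $U$ is $M$-invariant, i.e.\ a $K$-subspace of codimension $2$; consequently the members of $S$ contained in $U$ are exactly its $K$-lines and form a spread of $U$. This establishes the duality, and by the dictionary of the first paragraph $\cH_M=\cH_{(S,S^*)}$ is of spread-type.

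The main obstacle is the converse, and within it the verification that $S$ genuinely admits $S^*$ as a dual. The crux is recognizing that the two matrix conditions together are nothing but \lq\lq the minimal polynomial of $M$ is an irreducible quadratic\rq\rq, via the standard identification of the degree of the minimal polynomial with the maximal dimension of a cyclic subspace; once the $\FF_{q^2}$-structure on $V$ and $V^*$ is in place, every incidence becomes a statement about $K$-subspaces and the spread and dual-spread axioms follow from linear algebra over $K$. I would pay particular attention to the $M$-invariance of $U=\ker\xi\cap\ker(\xi M)$, as that is the precise point where the algebraic hypothesis on $M$ is needed to match the geometric dual-spread axiom.
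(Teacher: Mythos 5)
The paper never proves this proposition: it is imported verbatim from Pasini~\cite[Theorem 1.14]{Pasini24}, so there is no internal argument to compare yours against; your proposal has to stand on its own, and it does. The dictionary step is right: by Proposition~\ref{prop}, $([x],[\xi])\in\cH_M$ iff $\xi Mx=0$, i.e.\ iff $[\xi]\supseteq[\langle x,Mx\rangle]$, so once $\{\langle x,Mx\rangle\}$ is a spread with a dual the two hyperplanes coincide. In the forward direction, comparing the two families of projective hyperplanes through a fixed $[x]$ and intersecting them does force $\langle x,Mx\rangle=\ell_{[x]}$, which gives both the absence of eigenvectors (hence of eigenvalues in $\FF_q$ --- the left/right distinction the paper makes is immaterial here, since the spectra agree) and, via $\ell_{[Mx]}=\ell_{[x]}$, the condition $M^2x\in\langle x,Mx\rangle$; note $Mx\neq 0$ because $0$ is not an eigenvalue. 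Your converse hinges on the observation that the two hypotheses together say exactly that the minimal polynomial of $M$ is an irreducible quadratic; this uses the standard fact that $\deg\mu_M$ equals the maximal dimension of an $M$-cyclic subspace, which is legitimate to cite. From there the $K=\FF_q[M]\cong\FF_{q^2}$ structure on $V$ and on $V^*$ (acting by $\xi\mapsto\xi M$) makes $S$ the Desarguesian spread, $S^*$ its candidate dual, and the computation that $U=\ker\xi\cap\ker(\xi M)$ is $M$-invariant (using $M^2=\alpha M+\beta I$) is precisely the verification of the dual-spread axiom in the paper's definition, since $U$ is the codimension-$2$ subspace of $\PG(V)$ attached to the line $[K\xi]$ of $\PG(V^*)$. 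So your write-up supplies a complete, self-contained proof of a statement the paper leaves entirely to the literature; as a bonus it shows that the spread realizing $\cH_M$ is necessarily the Desarguesian one associated with the quadratic extension $\FF_{q^2}/\FF_q$.
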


\section{The code $\cC(\Lambda_{1})$ from the Segre embedding}\label{long root geometry code 1}
In this section  we consider the subcode of the Segre code $\mathcal{C}(\Lambda)$
defined by the projective system $\Lambda_{1}\subset \PG(M^0_{n+1}(q))$; see Definition~\eqref{e5}.
We will denote by $\cC(\Lambda_1)$ the  $[N_1, k_1, d_1]$-linear code arising from $\Lambda_1$. 
The length of this code is the number of point-hyperplane pairs $(p,H)$ of $\PG(n,q)$ with $p\in H,$ that is
\[N_1=\frac{(q^{n+1}-1)(q^n-1)}{(q-1)^2}.\]
The dimension of $\cC(\Lambda_1)$ is the dimension of the embedding $\bar{\vep}$, so
\[k_1= n^2+2n.\]

To determine the weight of the codewords of $\cC(\Lambda_{1})$ we need to compute the cardinality of $\Lambda_{1}\cap [W]$ where $[W]$ is a hyperplane of $[\langle \Lambda_{1}\rangle]$.
Recall from Section~\ref{forma saturata} that any hyperplane of $\PG(V\otimes V^*)$
can be regarded as the orthogonal subspace $[M^{\perp_f}]$ of an $(n+1)\times (n+1)$-matrix $M$ with respect to the saturation form $f$.
The next lemma is crucial.
Observe that in this paper, when we speak of eigenvectors of
a matrix $M$ we always mean \emph{left} eigenvectors; also by
$\ker(M)$ we mean the set of row vectors $\xi$ such that $\xi M=0$.

\begin{definition}
  \label{thetaM}
  For any matrix $M\in M_{n+1}(q)$, denote by $\nu_M$ the number of
  eigenvectors of $M$ and by $\theta_M:=\frac{\nu_M}{q-1}$ the number of
  projective points of $\PG(V^*)$ whose row representatives are
  eigenvectors for $M$.
\end{definition}

\begin{lemma}\label{pesi}
   Let $[M^{\perp_f}]$ be a hyperplane of $\PG(V\otimes V^*),$ for $M\in M_{n+1}(q)\setminus\langle I\rangle$. Then
   \begin{equation}\label{Mnat} |\Lambda_{1}\cap[M^{\perp_f}]|=
     \frac{(q^{n+1}-1)(q^{n-1}-1)}{(q-1)^2}+
     \theta_M \cdot q^{n-1}
  \end{equation}
  where $\theta_M$ is given by Definition~\ref{thetaM}.
 \end{lemma}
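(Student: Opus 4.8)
We need to compute $|\Lambda_1\cap[M^{\perp_f}]|$, the number of point-hyperplane pairs $([x],[\xi])$ with $[x]\in[\xi]$ (i.e.\ $\xi x=0$) that additionally lie in $[M^{\perp_f}]$. By Proposition~\ref{prop}, the tensor $x\otimes\xi$ lies in $M^{\perp_f}$ precisely when $\xi M x=0$. So the plan is to count pairs $([x],[\xi])$ satisfying the two linear conditions
\[ \xi x=0 \qquad\text{and}\qquad \xi M x=0. \]
I would organize the count by fixing the row vector $[\xi]\in\PG(V^*)$ first and, for each such $\xi$, counting the admissible $[x]$. For a fixed nonzero $\xi$, the two conditions $\xi x=0$ and $\xi M x=0$ are linear in $x$: the first says $x\in\ker(\xi)$ (an $n$-dimensional subspace), the second says $x\in\ker(\xi M)$. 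The geometry of the $[x]$-count therefore depends entirely on whether the functionals $\xi$ and $\xi M$ are linearly dependent as elements of $V^*$.

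\textbf{The dichotomy on $\xi$.}
The crucial observation is that $\xi$ and $\xi M$ are proportional exactly when $\xi$ is a left eigenvector of $M$ (including the degenerate case $\xi M=0$, i.e.\ $\xi\in\ker M$, which corresponds to eigenvalue $0$). I would split the sum over $[\xi]$ into two classes accordingly. First, if $[\xi]$ is \emph{not} an eigenpoint of $M$, then $\xi$ and $\xi M$ are independent functionals, so $\ker(\xi)\cap\ker(\xi M)$ has vector dimension $n-1$, contributing $(q^{n-1}-1)/(q-1)$ points $[x]$. Second, if $[\xi]$ \emph{is} an eigenpoint (so $\xi M=\lambda\xi$ for some $\lambda\in\FF_q$, possibly $\lambda=0$), the second condition is implied by the first, so $x$ ranges over the full hyperplane $\ker(\xi)$, giving $(q^n-1)/(q-1)$ points $[x]$. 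By Definition~\ref{thetaM}, the number of eigenpoints $[\xi]$ is exactly $\theta_M$, and the total number of $[\xi]$ is $(q^{n+1}-1)/(q-1)$.

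\textbf{Assembling the formula.}
Combining the two classes gives
\[
|\Lambda_1\cap[M^{\perp_f}]|
=\Bigl(\tfrac{q^{n+1}-1}{q-1}-\theta_M\Bigr)\tfrac{q^{n-1}-1}{q-1}
+\theta_M\cdot\tfrac{q^n-1}{q-1}.
\]
The first summand rearranges to $\tfrac{(q^{n+1}-1)(q^{n-1}-1)}{(q-1)^2}$ together with a correction $-\theta_M\tfrac{q^{n-1}-1}{q-1}$, and adding this correction to $\theta_M\tfrac{q^n-1}{q-1}$ leaves $\theta_M\bigl(\tfrac{q^n-1}{q-1}-\tfrac{q^{n-1}-1}{q-1}\bigr)=\theta_M\cdot q^{n-1}$, which is exactly the claimed right-hand side of~\eqref{Mnat}.

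\textbf{Anticipated obstacle.}
The only delicate point is to be sure the eigenvector dichotomy is airtight at the degenerate ends. I must check that the count for an eigenpoint is correct even when $\lambda=0$ (so $\xi\in\ker M$ and $\xi M$ is the zero functional, in which case $\ker(\xi M)=V$ and the second condition is vacuous—still giving the full $n$-dimensional $\ker(\xi)$), and that for a non-eigenpoint $\xi M$ is genuinely a nonzero functional independent of $\xi$, so that $\dim(\ker\xi\cap\ker\xi M)=n-1$ exactly rather than $n$. Here the hypothesis $M\notin\langle I\rangle$ is not actually needed for the count itself—it only guarantees that $[M^{\perp_f}]$ is a genuine hyperplane of $[M^0_{n+1}(q)]$ (equivalently, $M^{\perp_f}\neq M^0_{n+1}(q)$), which is what makes $|\Lambda_1\cap[M^{\perp_f}]|$ the cardinality of a hyperplane section rather than of all of $\Lambda_1$. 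Everything else is the elementary dimension-counting bookkeeping sketched above.
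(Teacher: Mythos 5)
Your proof is correct and takes essentially the same approach as the paper's: fix $[\xi]\in\PG(V^*)$, use Proposition~\ref{prop} to turn membership in $[M^{\perp_f}]$ into the condition $[x]\in[\xi M]$, split the count according to whether $\xi$ is a (left) eigenvector of $M$ (with eigenvalue $0$ allowed, matching the paper's convention $[0^*]=\PG(V)$), and assemble the sum using $\theta_M$. Your closing remarks on the degenerate case $\xi M=0$ and on the role of the hypothesis $M\notin\langle I\rangle$ are accurate refinements of what the paper leaves implicit.
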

 \begin{proof}
  First note that we can write
  $\Lambda_{1}=\{\bar{\vep}(([x],[\xi]))\colon [x]\in [\xi]\}$ as a disjoint union
  \[\Lambda_{1}=\bigsqcup_{[\xi]\in\PG(V^*)}\{ [x\otimes \xi] : [x]\in [\xi] \}.\]
  So,
   \[ [M^{\perp_f}]\cap\Lambda_{1}=\bigsqcup_{[\xi]\in\PG(V^*)}(\{ [x \otimes\xi] : [x]\in [\xi] \}\cap [M^{\perp_f}]). \]

   By Proposition~\ref{prop},
   $[x\otimes\xi]\in [M^{\perp_f}]$ if and only if $[x] \in [\xi M]$.
Hence, $[x \otimes \xi]\in \Lambda_{1}\cap [M^{\perp_f}]$ if and only if $[x] \in [\xi]$ and $[x]\in [\xi M],$ i.e.
   \[ \Lambda_1 \cap [M^{\perp_f}]=\bigsqcup_{[\xi]\in\PG(V^*)}(\{ [x \otimes \xi] : [x] \in([\xi] \cap [\xi M])\}). \]
 Turning to cardinalities,
 \begin{equation}
   \label{9bis} |\Lambda_1 \cap [M^{\perp_f}]|=\sum_{[\xi]\in\PG(V^*)}|[\xi] \cap[\xi M]|.
 \end{equation}
Note that $|[\xi]\cap[\xi M]|=|[\xi]|=\frac{(q^n-1)}{(q-1)}$ if $[\xi]\subseteq [\xi M]$ and  $|[\xi]\cap[\xi M]|=\frac{(q^{n-1}-1)}{(q-1)}$ otherwise. On the other hand, $[\xi]\cap[\xi M]=[\xi]$ if and only if $\xi M$ is a scalar multiple of $\xi$, that is $\xi$ is an eigenvector of $M$. Since $\theta_M$ denotes the number of points of $\PG(V^*)$
  whose vector representatives
  are eigenvectors of $M$ we have
\begin{multline*}
  |\Lambda_{1}\cap [M^{\perp_f}]|= \theta_M\cdot\frac{q^{n}-1}{q-1}+
  \left(\frac{q^{n+1}-1}{q-1}-\theta_M\right)\cdot\frac{q^{n-1}-1}{q-1}=\\
  \frac{(q^{n+1}-1)(q^{n-1}-1)}{(q-1)^2}+\theta_M\cdot q^{n-1}.
\end{multline*}
\end{proof}

\subsection{The codewords of $\cC(\Lambda_1)$}
Suppose $\Lambda_1:=\{[X_1],[X_2],\dots, [X_N]\}\subseteq \PG({M}_{n+1}^0(q))$ and denote by ${M}_{n+1}^*(q)$ the dual of the vector space
  $M_{n+1}(q)$.
  For any functional ${\mathfrak m}\in {M}_{n+1}^*(q)$,
  there exists a unique matrix $M\in M_{n+1}(q)$ such that
  ${\mathfrak m}={\mathfrak m}_M$ and
  \[{\mathfrak m}\colon {M}_{n+1}(q)\rightarrow \FF_q,\quad \mathfrak{m}(X)=\Tr(XM)\]
  for all $X\in M_{n+1}(q)$.
  Consider now the $N$-tuple
  \begin{equation}
    c_{\mathfrak{m}}=(\mathfrak{m} (X_1),\dots,\mathfrak{m}(X_N))
    \label{c_m}
  \end{equation}
  with     \begin{equation}\label{m_m} \mathfrak{m}(X_i)=\Tr(X_iM),\,\, 1\leq i\leq N, \end{equation}
  where $M\in {M}_{n+1}(q)$ is associated to $\mathfrak{m}$ as before.
  In this setting,
  \[\cC(\Lambda_1)=\{ c_{\mathfrak m}: \mathfrak{m}\in M_{n+1}^*(q) \}.\]
  In general there are  more than one functional $\mathfrak{m}$ (resp.\ matrix $M$) defining one codeword
  $c_{\mathfrak m}$.

\begin{lemma}
  \label{tlll}
  Let $M$ be a matrix such that  $\Tr(XM)=0$ for all $X\in M_{n+1}^0(q)$.
  Then $M\in\langle I\rangle$.
\end{lemma}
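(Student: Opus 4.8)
The plan is to recognize the hypothesis as an orthogonality statement with respect to the saturation form $f$ and then exploit its non-degeneracy. The condition $\Tr(XM)=0$ for every $X\in M^0_{n+1}(q)$ says exactly that $M\in (M^0_{n+1}(q))^{\perp_f}$. Since the excerpt has already recorded that $I^{\perp_f}=M^0_{n+1}(q)$, what remains to show is simply that $(I^{\perp_f})^{\perp_f}=\langle I\rangle$, and then $M\in\langle I\rangle$ follows at once.

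First I would verify the easy inclusion $\langle I\rangle\subseteq (I^{\perp_f})^{\perp_f}$: every $X\in I^{\perp_f}$ satisfies $f(I,X)=\Tr(X)=0$, so $I$ is orthogonal to all of $M^0_{n+1}(q)$. For the reverse inclusion I would invoke that $f$ is non-degenerate, so the orthogonal complement of the hyperplane $I^{\perp_f}$ (which has codimension $1$) is exactly $1$-dimensional; since it already contains the nonzero vector $I$, it must equal $\langle I\rangle$. Combining the two inclusions gives the claim.

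Alternatively, a fully elementary computation bypasses the double-perp language. Testing the hypothesis against the trace-zero elementary matrices $\bee{ij}$ with $i\neq j$ gives $\Tr(\bee{ij}M)=M_{ji}=0$, which forces all off-diagonal entries to vanish, so $M$ is diagonal; testing next against $\bee{ii}-\bee{jj}$ yields $M_{ii}-M_{jj}=0$ for all $i,j$, so the diagonal entries all coincide and $M=\lambda I$ for some $\lambda\in\FF_q$.

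There is essentially no hard step here; the only point requiring care is the dimension count for $(I^{\perp_f})^{\perp_f}$, which is precisely where non-degeneracy of $f$ enters (the double orthogonal of a subspace returns the subspace only because the form is non-degenerate). The elementary version makes this transparent, so I would most likely record it as the actual proof and keep the $\perp_f$ argument as the conceptual explanation.
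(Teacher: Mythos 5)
Your first argument is exactly the paper's proof: the hypothesis says $M\in (M^0_{n+1}(q))^{\perp_f}=(I^{\perp_f})^{\perp_f}$, and this double orthogonal equals $\langle I\rangle$ by non-degeneracy of the saturation form $f$. Your elementary computation with the matrices $\bee{ij}$ ($i\neq j$) and $\bee{ii}-\bee{jj}$ is also correct and makes the dimension/non-degeneracy step self-contained, but it is an optional addition rather than a different route.
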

\begin{proof}
The claim is straightforward because $M_{n+1}^0(q)^{\perp_f}=(I^{\perp_f})^{\perp_f}=\langle I\rangle$.
\end{proof}

\begin{prop}\label{prop nuova}
  \label{oorb}
  $\cC(\Lambda_1)$ is vectorially isomorphic to the quotient space $M_{n+1}(q)/\langle I\rangle$.
\end{prop}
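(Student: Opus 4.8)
The plan is to realize $\cC(\Lambda_1)$ as the image of an explicit linear map out of $M_{n+1}(q)$ and then apply the first isomorphism theorem. Concretely, I would define
\[
\Phi\colon M_{n+1}(q)\longrightarrow \cC(\Lambda_1),\qquad
\Phi(M)=c_{\mathfrak{m}_M}=(\Tr(X_1M),\dots,\Tr(X_NM)),
\]
where $X_1,\dots,X_N$ are the fixed matrix representatives of the points of $\Lambda_1$ and $\mathfrak{m}_M$ is the functional associated to $M$ as in~\eqref{m_m}. Since $\Tr(X_i(\,\cdot\,))$ is linear in its argument, each coordinate of $\Phi$ is linear in $M$, so $\Phi$ is a linear map; and it is surjective by the very description $\cC(\Lambda_1)=\{c_{\mathfrak m}:\mathfrak m\in M_{n+1}^*(q)\}$ together with the fact that $M\mapsto\mathfrak m_M$ is a bijection $M_{n+1}(q)\to M_{n+1}^*(q)$ induced by the nondegeneracy of the saturation form $f$.

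It then remains to identify $\ker\Phi$. By definition,
\[
\ker\Phi=\{M\in M_{n+1}(q)\colon \Tr(X_iM)=0\ \text{for all}\ 1\le i\le N\},
\]
that is, the set of $M$ orthogonal (with respect to $f$) to every representative $X_i$ of $\Lambda_1$. The key observation is that $\Lambda_1=\bar{\vep}(\bar\Gamma)$ spans the ambient space $\PG(M^0_{n+1}(q))$: indeed $\bar{\vep}$ is a projective embedding with $\dim(\bar{\vep})=(n+1)^2-1=\dim M^0_{n+1}(q)$, so by definition of embedding the $X_i$ linearly span all of $M^0_{n+1}(q)$. Consequently the condition $\Tr(X_iM)=0$ for every $i$ is equivalent to $\Tr(XM)=0$ for every $X\in M^0_{n+1}(q)$.

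At this point Lemma~\ref{tlll} applies directly and gives $\ker\Phi=\langle I\rangle$; the reverse inclusion is immediate, since $M=\lambda I$ yields $\Tr(X_i\lambda I)=\lambda\,\Tr(X_i)=0$ because each $X_i$ is null-traced. The first isomorphism theorem then delivers the vector space isomorphism $M_{n+1}(q)/\langle I\rangle\cong\cC(\Lambda_1)$, as claimed. I do not expect any genuine obstacle here: the only point requiring care is the spanning claim for $\Lambda_1$, which is exactly what guarantees that orthogonality to the finite pointset $\{X_i\}$ upgrades to orthogonality to all of $M^0_{n+1}(q)$ and hence lets Lemma~\ref{tlll} control the kernel rather than merely a subspace containing $\langle I\rangle$.
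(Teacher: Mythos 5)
Your proof is correct and follows essentially the same route as the paper: an evaluation map onto $\cC(\Lambda_1)$, surjectivity, identification of the kernel via the spanning of $M^0_{n+1}(q)$ by the representatives of $\Lambda_1$ together with Lemma~\ref{tlll}, and the first isomorphism theorem. The only (immaterial) difference is that you define the map directly on $M_{n+1}(q)$ by precomposing with $M\mapsto\mathfrak{m}_M$, whereas the paper defines $ev$ on $M_{n+1}^*(q)$ and invokes the isomorphism $M_{n+1}(q)\cong M_{n+1}^*(q)$ at the end.
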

\begin{proof}
Define the evaluation function
\[ev:{M}_{n+1}^*(q)\to\cC (\Lambda_1),\qquad\mathfrak{m}\mapsto c_{\mathfrak{m}}.\]
By~\cite{TVZ}, $ev$ is linear and
surjective  and
$\cC(\Lambda_1)=\{   c_{\mathfrak{m}}\colon {\mathfrak m}\in {M}_{n+1}^*(q)\}=
ev(M_{n+1}(q))$.
The kernel of $ev$
is given by all $\mathfrak{m}$ such that $c_{\mathfrak{m}}=0$, i.e., by~\eqref{m_m}, the kernel of $ev$ can be identified with the space of all $M\in {M}_{n+1}(q)$ such that  $\Tr(XM)=0$ for all $X\in {M}_{n+1}^0(q),$ since $\langle \Lambda_1\rangle=\PG({M}_{n+1}^0(q))$.

By Lemma~\ref{tlll}, $\ker(ev)=\{ \mathfrak{m}_{\alpha I}\colon \alpha\in\FF_q\}$, where $\mathfrak{m}_{\alpha I}\colon {M}_{n+1}(q)\rightarrow \FF_q, \mathfrak{m}_{\alpha I}(X)=\Tr(X \alpha I)=\alpha\Tr(X)$.
So, the function $ev$ induces the vector space isomorphism
\[M_{n+1}^*(q)/\langle \mathfrak{m}_I\rangle \cong \cC(\Lambda_1).\]

Since the vector space $M_{n+1}(q)$ is isomorphic to $M_{n+1}^*(q)$,  we have that
\[ M_{n+1}(q)/\langle I\rangle\cong M_{n+1}^*(q)/\langle \mathfrak{m}_I\rangle
  \] and the claim follows.
\end{proof}

\begin{corollary}\label{co nuovo}
Suppose $\Lambda_1=\{[X_i]\colon i=1,\dots, N\}.$
\begin{enumerate}
\item If $p\nmid(n+1)$, then
\begin{multline*}
\cC(\Lambda_1)  =\{   c_{\mathfrak{m}}\colon {\mathfrak m}(X)=\Tr(XM)\,\, {\textrm with}\,\, M\in M_{n+1}^0(q)\}\\
 =\{  (\Tr(X_1M),\dots, \Tr(X_N M))\colon [X_i]\in \Lambda_1,\, M\in M_{n+1}^0(q)\}.
\end{multline*}
\item If $p|(n+1)$, then
\begin{multline*}
\cC(\Lambda_1)=
 \{  (\Tr(X_1M),\dots, \Tr(X_N M))\colon [X_i]\in \Lambda_1, \\
   M\in M_{n+1}(q) \text{ with } m_{1,1}=0\}.
\end{multline*}
\end{enumerate}

\end{corollary}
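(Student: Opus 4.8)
The plan is to reduce the statement to a question about choosing a linear complement of $\langle I\rangle$ inside $M_{n+1}(q)$. Recall from Proposition~\ref{oorb} that the evaluation map $ev\colon M_{n+1}^*(q)\to\cC(\Lambda_1)$, $\mathfrak{m}_M\mapsto c_{\mathfrak{m}_M}$, is linear and surjective with kernel $\langle\mathfrak{m}_I\rangle$. Since $M\mapsto\mathfrak{m}_M$ is a linear isomorphism $M_{n+1}(q)\cong M_{n+1}^*(q)$, this says precisely that every codeword of $\cC(\Lambda_1)$ is of the form $c_M:=(\Tr(X_1M),\dots,\Tr(X_NM))$ for some $M\in M_{n+1}(q)$, and that $c_M=c_{M'}$ if and only if $M-M'\in\langle I\rangle$. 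Consequently, if $U\subseteq M_{n+1}(q)$ is any subspace with $M_{n+1}(q)=U\oplus\langle I\rangle$, then every codeword admits a representative $M\in U$, so that $\cC(\Lambda_1)=\{c_M\colon M\in U\}$.

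First I would treat the case $p\nmid(n+1)$, taking $U=M^0_{n+1}(q)$. A nonzero scalar matrix $\alpha I$ has trace $(n+1)\alpha$, which vanishes only if $\alpha=0$ because $p\nmid(n+1)$; thus $M^0_{n+1}(q)\cap\langle I\rangle=\{O\}$. Comparing dimensions, $\dim M^0_{n+1}(q)+\dim\langle I\rangle=((n+1)^2-1)+1=\dim M_{n+1}(q)$, so the sum is direct and $U$ is a complement of $\langle I\rangle$. The first displayed equality then follows from the reduction above.

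For $p\mid(n+1)$ the trace-zero space is no longer available as a complement: now $\Tr(I)=n+1=0$ in $\FF_q$, so $I\in M^0_{n+1}(q)$ and $\langle I\rangle\subseteq M^0_{n+1}(q)$. Instead I would take $U=\{M\in M_{n+1}(q)\colon m_{1,1}=0\}$. Any nonzero scalar matrix $\alpha I$ has $(1,1)$-entry $\alpha\neq0$, so $U\cap\langle I\rangle=\{O\}$; since $\dim U=(n+1)^2-1$, a dimension count again gives $M_{n+1}(q)=U\oplus\langle I\rangle$, and the second equality follows. The whole argument is elementary linear algebra; the only point requiring care---and the reason two cases appear---is that the natural complement $M^0_{n+1}(q)$ degenerates exactly when $p\mid(n+1)$, forcing the \emph{ad hoc} choice of complement in the second case.
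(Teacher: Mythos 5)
Your proof is correct and follows essentially the same route as the paper: both arguments invoke Proposition~\ref{prop nuova} to identify $\cC(\Lambda_1)$ with $M_{n+1}(q)/\langle I\rangle$ and then select coset representatives, namely trace-zero matrices when $p\nmid(n+1)$ and matrices with vanishing $(1,1)$-entry when $p\mid(n+1)$. The only cosmetic difference is that the paper exhibits the unique representative in each coset explicitly (via $M-\tfrac{\Tr(M)}{n+1}I$, resp.\ $M-m_{1,1}I$), whereas you verify that the chosen subspace is a complement of $\langle I\rangle$ by trivial intersection plus a dimension count; these are equivalent formulations of the same fact.
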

\begin{proof}
By Proposition~\ref{prop nuova}, the function
  $ev/\langle I\rangle \colon M_{n+1}(q)/\langle I\rangle\to \mathcal{C}(\Lambda_1)$
    is a vector space isomorphism.
Suppose $p\nmid (n+1)$. Each class $[\![M]\!]=M +\langle I\rangle $ in $M_{n+1}(q)/\langle I\rangle$ contains exactly one matrix $M_0\in M_{n+1}(q)$ of trace $0$; we choose that matrix as a
canonical\footnote{In more formal terms, the map $\pi:M_{n+1}/\langle I\rangle \to M_{n+1}^0(q)$ given
  by $\pi([\![M]\!]):=M-\Tr(M)I$ is a well defined vector space
  isomorphism, which commutes with matrix conjugation, in the sense
  that for all $g\in\GL(n+1,q)$, $\pi([\![M]\!]^g)=(\pi([\![M]\!]))^g$.}
  representative for the coset in $M_{n+1}(q)/\langle I\rangle$.

If $p\mid (n+1)$, then all matrices $M$  in the same class $[\![M]\!]=M +\langle I\rangle$
have the same trace. We can now choose as representative of $[\![M]\!]$ the
only matrix $N\in [\![M]\!]$ given by $N:=M-m_{1,1}I$ whose entry in
position $(1,1)$ is $0$.
\end{proof}

As a consequence of Proposition~\ref{prop nuova} and Theorem~\ref{main thm 2},
it is easy to define an efficient encoding function for $\cC(\Lambda_1)$, without
the need of explicitly writing out a generator matrix; in
particular if $[\![M]\!]\in M_{n+1}(q)/\langle I\rangle$ and
$X_1,X_2,\dots,X_N$ are matrix representatives of the points $[X_1],\dots,[X_N]$
of the projective system of $\Lambda_1$, then the codeword corresponding
to $[\![M]\!]$ is given by $(\Tr(X_1M),\dots,\Tr(X_NM))$.

\medskip

 The following is straightforward from Lemma~\ref{pesi}, considering that for any codeword $c\in \cC(\Lambda_{1})$, the weight of $c$ is $wt(c)=N_{1}-|[M^{\perp_f}]\cap\Lambda_{1}|$, where $M$ is the matrix associated to $c$.
\begin{corollary}\label{weights}
  The spectrum of weights of $\cC(\Lambda_{1})$	is
\[\{q^{n-1}\frac{(q^{n+1}-1)}{(q-1)} -q^{n-1} \theta_M\colon M\in M_{n+1}(q)\}\]
where $\theta_M$ is given by Definition~\ref{thetaM}.
\end{corollary}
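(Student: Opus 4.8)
The plan is to read off each coordinate of a codeword as a trace pairing and then invoke Lemma~\ref{pesi} directly. By Proposition~\ref{prop nuova} every codeword of $\cC(\Lambda_1)$ has the form $c_{\mathfrak m}=(\Tr(X_1M),\dots,\Tr(X_NM))$ for a matrix $M\in M_{n+1}(q)$, determined uniquely modulo $\langle I\rangle$. First I would observe that the $i$-th coordinate vanishes precisely when $\Tr(X_iM)=0$, i.e.\ when $[X_i]\in[M^{\perp_f}]$; since every $[X_i]$ lies in $\Lambda_1$, the number of zero coordinates equals $|\Lambda_1\cap[M^{\perp_f}]|$. Hence $wt(c_{\mathfrak m})=N_1-|\Lambda_1\cap[M^{\perp_f}]|$, exactly the relation recalled just before the statement.

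For $M\notin\langle I\rangle$ the hyperplane $[M^{\perp_f}]$ is well defined and Lemma~\ref{pesi} applies. Substituting \eqref{Mnat} into the displayed expression for the weight, I would compute
\[ N_1-|\Lambda_1\cap[M^{\perp_f}]|=\frac{(q^{n+1}-1)(q^{n}-1)}{(q-1)^2}-\frac{(q^{n+1}-1)(q^{n-1}-1)}{(q-1)^2}-\theta_M\,q^{n-1}. \]
The first two summands combine, after factoring out $(q^{n+1}-1)/(q-1)^2$ and using $q^n-q^{n-1}=q^{n-1}(q-1)$, to $q^{n-1}(q^{n+1}-1)/(q-1)$, which yields exactly $q^{n-1}\tfrac{q^{n+1}-1}{q-1}-q^{n-1}\theta_M$. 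This is routine algebra and is the only computation involved.

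The one genuine point to check is that letting $M$ range over all of $M_{n+1}(q)$---rather than only over $M\notin\langle I\rangle$---produces precisely the full weight spectrum, including the weight $0$ of the zero codeword. I would argue this in two steps. First, $\theta_M$ depends only on the coset $M+\langle I\rangle$, since $\xi M=\lambda\xi$ holds if and only if $\xi(M+\alpha I)=(\lambda+\alpha)\xi$; adding a scalar matrix shifts eigenvalues but fixes the set of eigenvectors, so the formula is consistent with the identification of codewords in Proposition~\ref{prop nuova}. Second, for a scalar matrix $M=\alpha I$ (in particular $M=O$) every non-zero row vector is an eigenvector, so $\theta_M=(q^{n+1}-1)/(q-1)$ and the right-hand side evaluates to $0$, matching the zero codeword obtained from $\langle I\rangle$. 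Consequently the set $\{q^{n-1}\tfrac{q^{n+1}-1}{q-1}-q^{n-1}\theta_M:M\in M_{n+1}(q)\}$ coincides with $\{wt(c):c\in\cC(\Lambda_1)\}$, which is the assertion. There is no serious obstacle here; the statement is indeed an immediate consequence of Lemma~\ref{pesi} once the scalar case is accounted for.
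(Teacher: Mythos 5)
Your proposal is correct and follows exactly the paper's route: the paper also deduces the corollary from the relation $wt(c)=N_1-|[M^{\perp_f}]\cap\Lambda_1|$ together with Lemma~\ref{pesi}, treating it as immediate. Your extra care with the algebraic simplification and with scalar matrices (where $\theta_M=(q^{n+1}-1)/(q-1)$ gives the zero codeword) just makes explicit what the paper leaves as "straightforward."
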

\subsection{Proof of Theorem~\ref{main thm 1}}
\begin{lemma}\label{autovalori}
If $M$ is a diagonalizable matrix having $t>2$ eigenspaces of dimensions $g_1\geq g_2,\dots\geq g_t$ then it is always possible to construct a matrix $M'$ with $t-1$ eigenspaces of dimension respectively  $g_1'\geq g_2'\geq\dots\geq g_{t-1}'$, with $g_1'=g_1+g_2,$ $g_i'=g_{i+1}$, $2\leq i\leq t-1$ so that  $\nu_{M'}>\nu_M$.
   \end{lemma}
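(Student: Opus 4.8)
We have a diagonalizable matrix $M$ with $t>2$ eigenspaces of dimensions $g_1\geq g_2\geq\dots\geq g_t$. The key quantity is $\nu_M$, the number of eigenvectors (nonzero vectors lying in some eigenspace). Since distinct eigenspaces meet only in $0$, the eigenvectors are exactly the nonzero vectors of the $t$ eigenspaces, so by inclusion–exclusion $\nu_M = \sum_{i=1}^t (q^{g_i}-1)$. The claim is that merging the two largest eigenspaces (taking $g_1'=g_1+g_2$ and shifting the rest) strictly increases this count.

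**Goal.** I want to show $\nu_{M'}>\nu_M$, i.e.
\[
(q^{g_1+g_2}-1)+\sum_{i=3}^t(q^{g_i}-1) \;>\; \sum_{i=1}^t(q^{g_i}-1).
\]
This is purely a comparison of the affected terms: the tail $\sum_{i=3}^t(q^{g_i}-1)$ is identical on both sides, so after cancelling it the inequality reduces to
\[
q^{g_1+g_2}-1 \;>\; (q^{g_1}-1)+(q^{g_2}-1),
\]
that is $q^{g_1+g_2} > q^{g_1}+q^{g_2}-1$.

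**Plan of proof.** The plan is first to argue that such an $M'$ exists as a genuine diagonalizable matrix: since $g_1'+g_2'+\dots+g_{t-1}' = g_1+g_2+\dots+g_t \leq n+1$ and we still have $t-1\geq 2$ eigenspaces of positive dimension, I can realize any prescribed multiset of eigenspace dimensions (with distinct eigenvalues in $\FF_q$, using that $t-1\leq t\leq q$ leaves enough distinct scalars available) as the eigenvalue data of a diagonal matrix $M'$; this is the only place the ``constructibility'' part of the statement is used. Second, I reduce $\nu_{M'}>\nu_M$ to the single inequality $q^{g_1+g_2} > q^{g_1}+q^{g_2}-1$ via the eigenvector count $\nu_M=\sum_i(q^{g_i}-1)$ and the cancellation above. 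Finally I verify that inequality: writing $a=g_1\geq 1$ and $b=g_2\geq 1$, we have $q^{a+b}=q^a\cdot q^b$, and since $q\geq 2$ and $q^b\geq q\geq 2$,
\[
q^{a+b}-q^a-q^b+1=(q^a-1)(q^b-1)\geq (q-1)^2\geq 1>0,
\]
which gives the strict inequality at once. I expect this factorization $q^{a+b}-q^a-q^b+1=(q^a-1)(q^b-1)$ to be the clean engine of the whole argument.

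**Main obstacle.** The inequality itself is elementary once the eigenvector count is in hand, so the only genuine content is the bookkeeping in the first step: checking that the merged dimension vector $(g_1+g_2,g_3,\dots,g_t)$ really is realizable by a diagonalizable matrix over $\FF_q$ of the same order, respecting the constraint that the number of eigenspaces does not exceed $q$ (so distinct eigenvalues exist) and that the total dimension is unchanged and still at most $n+1$. Since we are \emph{reducing} the number of eigenspaces from $t$ to $t-1$, the availability of distinct eigenvalues is inherited from the hypothesis on $M$, so this step presents no real difficulty; it is merely a matter of exhibiting the diagonal form explicitly.
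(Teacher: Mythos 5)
Your proof is correct and follows essentially the same route as the paper: count eigenvectors as $\nu_M=\sum_{i=1}^t(q^{g_i}-1)$, realize the merged dimension vector by an explicit diagonal matrix (the paper reuses the eigenvalues $\lambda_2,\dots,\lambda_t$ of $M$), and reduce to the elementary inequality $q^{g_1+g_2}-1>(q^{g_1}-1)+(q^{g_2}-1)$. Your factorization $(q^{g_1}-1)(q^{g_2}-1)>0$ is a marginally cleaner way to finish than the paper's rearrangement $q^{g_2}(q^{g_1}-1)>q^{g_1}-1$, but the argument is the same.
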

 \begin{proof}
The number of eigenvectors of $M$ is $\nu_M=\sum_{i=1}^t (q^{g_i}-1)$
   with $g_i\leq n$.   For $i=1,\dots t$, let $\lambda_i$ be the eigenvalue of $M$ corresponding to
   the eigenspace having dimension $g_i$. Define as follows a diagonal matrix $M'$ which has  $\lambda_2,\dots, \lambda_t$ as eigenvalues:
\[M':=diag(\underbrace{\lambda_2,\dots, \lambda_2}_{g_1+g_2},\underbrace{\lambda_3,\dots, \lambda_3}_{g_3}, \dots, \underbrace{\lambda_t,\dots, \lambda_t}_{g_t}).\]
Clearly, the dimensions of the eigenspaces of $M'$ are $g_1'\geq g_2'\geq\dots\geq g_{t-1}'$, with $g_1'=g_1+g_2,$ $g_i'=g_{i+1}$, $2\leq i\leq t-1$ and $\nu_{M'}=\sum_{i=1}^{t-1} (q^{g'_i}-1)= (q^{g_1+g_2}-1)+\sum_{i=3}^{t-1} (q^{g_i}-1)$.
  We have    $\nu_{M'}-\nu_M=(q^{g_1+g_2}-1)-(q^{g_1}-1)-(q^{g_2}-1)>0$    if and only if
   $q^{g_1+g_2}-q^{g_2}=q^{g_2}(q^{g_1}-1)> q^{g_1}-1$, that is $q^{g_2}>1$.
   Since $q>1$ and $g_2>0$, it follows $\nu_{M'}>\nu_M$.
 \end{proof}

 \begin{lemma}\label{max int}
  A non-scalar $(n+1)$-square matrix has a maximum number $\nu_{max}=q^n+q-2$ of eigenvectors
  if and only if it admits exactly two eigenspaces of
  respective dimensions $n$ and $1$.
 \end{lemma}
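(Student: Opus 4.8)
The plan is to reduce the statement to a purely combinatorial optimization over the dimensions of the eigenspaces, and then track strictness to pin down the equality case. Recall that (left) eigenvectors belonging to distinct eigenvalues are linearly independent. So if $M$ has distinct $\FF_q$-eigenvalues $\lambda_1,\dots,\lambda_s$ whose eigenspaces have dimensions $g_1\ge\cdots\ge g_s\ge 1$, these eigenspaces are in direct sum inside the $(n+1)$-dimensional space $V^*$, whence $\sum_{i=1}^s g_i\le n+1$. Since a non-zero vector cannot be an eigenvector for two distinct eigenvalues, the eigenvectors are partitioned by eigenvalue and $\nu_M=\sum_{i=1}^s(q^{g_i}-1)$. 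The hypothesis that $M$ is non-scalar means precisely that we forbid the single configuration $s=1,\ g_1=n+1$. Thus the task becomes: maximize the numerical function $F(g_1,\dots,g_s)=\sum_{i=1}^s(q^{g_i}-1)$ over all profiles with $g_i\ge 1$, $\sum g_i\le n+1$, excluding the scalar profile, and show the maximum $q^n+q-2$ is attained exactly at $(g_1,g_2)=(n,1)$. Since $\nu_M=F(\text{eigenspace profile of }M)$ regardless of whether $M$ is diagonalizable, this framework covers non-diagonalizable matrices as well, only the geometric eigenspace dimensions entering.

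First I would show one may assume $\sum g_i=n+1$ and $s=2$. If $s\ge 2$ and $\sum g_i<n+1$, enlarging $g_1$ by one strictly increases $F$, so the maximum is attained with the budget filled to $n+1$. The numerical identity underlying Lemma~\ref{autovalori}, namely $(q^{g_1+g_2}-1)-(q^{g_1}-1)-(q^{g_2}-1)=(q^{g_1}-1)(q^{g_2}-1)>0$ for $g_1,g_2\ge 1$, shows that merging two eigenspaces strictly increases the eigenvector count; applying it repeatedly drives $s$ down to $2$ while keeping $\sum g_i=n+1$ fixed (we cannot reach $s=1$, as that is the excluded scalar case). It then remains to optimize the split, and here the elementary inequality $q^{a+b-1}+q-q^a-q^b=(q^{a-1}-1)(q^{b}-q)\ge 0$ for integers $a,b\ge 1$ (equality iff $\min(a,b)=1$) shows that among all splits $g_1+g_2=n+1$ the value $(q^{g_1}-1)+(q^{g_2}-1)$ is largest, equal to $q^n+q-2$, precisely for $(g_1,g_2)=(n,1)$.

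Combining these steps yields the upper bound $\nu_M\le q^n+q-2$ for every non-scalar $M$, since the only remaining case $s=1$ (then forced to $g_1\le n$) gives merely $q^{g_1}-1\le q^n-1<q^n+q-2$. For the equivalence I would track strictness: if the eigenspace profile of $M$ is anything other than $(n,1)$, then at least one of the monotone steps above is strict (the budget is not full, or there are at least three eigenspaces, or the two-eigenspace split has both parts $\ge 2$), forcing $\nu_M<q^n+q-2$; conversely the profile $(n,1)$ gives $\nu_M=(q^n-1)+(q-1)=q^n+q-2$ exactly. Finally I would note that a profile $(n,1)$ has $g_1+g_2=n+1=\dim V^*$, so the two eigenspaces span $V^*$ and $M$ is automatically diagonalizable with exactly two eigenspaces, as asserted.

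The hard part will be the uniqueness (the ``only if'' direction): one must confirm that no competing configuration — in particular a single eigenvalue of large geometric multiplicity, or a non-diagonalizable matrix — can tie the record $q^n+q-2$. This is exactly what the strict-inequality bookkeeping above delivers, but it requires care to ensure the feasibility constraints ($s\le\sum g_i\le n+1$, and $s\le q$, the latter automatic since the optimum uses $s=2\le q$) never obstruct the reductions, and that the whole argument is conducted at the level of the function $F$ evaluated on geometric eigenspace dimensions, so that it remains valid without assuming diagonalizability from the start.
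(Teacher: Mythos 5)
Your proof is correct and takes essentially the same approach as the paper: your merging identity is exactly the paper's Lemma~\ref{autovalori}, and the reduction---fill the dimension budget to $n+1$, merge eigenspaces down to two, then optimize the split $g_1+g_2=n+1$---mirrors the paper's argument, which likewise disposes of the non-diagonalizable case via the bound $\nu_M\le q^n-1$. The only difference is one of explicitness: your factorization $(q^{a-1}-1)(q^b-q)\ge 0$ and your strictness bookkeeping make rigorous the steps the paper compresses into ``this is maximum when $g=n$'' and ``the converse follows immediately.''
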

 \begin{proof}
   Let $M$ be a non scalar $(n+1)$-square matrix.
   If $M$ cannot be diagonalized, then the sum of the dimensions of its
   eigenspaces is at most $n$ and, consequently it has at most $q^n-1$
   eigenvectors.
   Suppose now that $M$ can be diagonalized and
   let $t$ be the number of eigenspaces of $M$ and
   $g_1\geq g_2\geq \dots\geq g_t$ be the respective dimensions of the
   eigenspaces. Since $M\notin \langle I\rangle$, we have $t\geq 2$.
   By recursively applying Lemma~\ref{autovalori}, we see that the maximum number
   of eigenvectors for a non-scalar diagonalizable $(n+1)$-square matrix can be attained only for $t=2$. So, suppose we have a matrix $M$ with just two eigenspaces of dimensions $g\leq n$ and $n+1-g$. Assume $g\geq n+1-g$. The number of eigenvectors of $M$ is then $\nu_M=(q^{g}-1)+(q^{n+1-g}-1)$.  This is maximum when $g=n$ and gives $\nu_{\max}=q^n+q-2\geq q^n>q^n-1$.
   The converse follows immediately.
 \end{proof}

 \medskip

  The length and dimension of $\cC(\Lambda_1)$ are computed  at the beginning of Section~\ref{long root geometry code 1}.

 By Corollary~\ref{weights}
 and Lemma~\ref{max int}, since $\theta_{max}=\nu_{max}/(q-1),$ we have that
the minimum distance of $\cC(\Lambda_1)$ is
\[d_1=
  q^{n-1}\frac{(q^{n+1}-1)}{(q-1)} -q^{n-1} \frac{q^n+q-2}{q-1}
  =q^{2n-1} -q^{n-1}.\]

Theorem~\ref{main thm 1} is proved.\hfill $\square$

\subsection{Proof of Theorem~\ref{main thm 2}.}
Let $M$ be a non-scalar matrix of $M_{n+1}(q)$ and let
$t$ denote the number of eigenspaces of $M$. Since the number of eigenspaces of any matrix is, clearly, the same as the number of its eigenvalues (which ranges in $\FF_q$), we have $t\leq q$ and since $M$ is non-scalar, $t\leq n$; so $t\leq\min(n,q)$.

Consider the following sets, where $\theta_M$ is given by
Definition~\ref{thetaM}:
\begin{equation}\label{autovettori}
	E=\left\{ \theta_M \colon M\in M_{n+1}(q)\setminus \langle I\rangle\right\}
      \end{equation}
    { and }
\begin{multline}\label{lista-}
D= \{0\}\cup\{(g_1,\dots, g_t)\colon
\sum_{i=1}^tg_i \leq n+1,\,\, \\ 1\leq g_1\leq \dots \leq g_i\leq g_{i+1}\leq \dots \leq  g_t\leq  n+1,
1 \leq t \leq q\}.
\end{multline}	
Observe first that if $m(x)$ is a monic irreducible polynomial over $\FF_q$
of degree $n+1$, then its companion matrix has order $n+1$
and does not have any eigenvalue in $\FF_q$. Clearly, such
a companion matrix is not a scalar matrix; so $0\in E$.

Assume $1\leq t\leq q$. Take $t$ distinct elements $\lambda_1,\dots,\lambda_t\in\FF_q$
 and for any $t$-tuple  $(g_1,\dots, g_t)\in D$, define a matrix $M_{(g_1,\dots, g_t)}\in M_{n+1}(q)\setminus \langle I \rangle$ as a block
matrix of the form
\[ M_{(g_1,\dots,g_t)}=diag(\underbrace{\lambda_1,\dots,\lambda_1}_{g_1},
  \underbrace{\lambda_2,\dots,\lambda_2}_{g_2},
  \dots
  \underbrace{\lambda_t,\dots,\lambda_t}_{g_t-1},
  R_{\lambda_t}) \]
where $R_{\lambda_t}$ is the Jordan block of order $n+2-\sum_{i=1}^t{g_i}$
of the form
\[ R_{\lambda_t}=\begin{pmatrix}
  \lambda_t & 1 & 0 & \dots & 0 \\
  0     &\lambda_t & 1 & \dots & 0 \\
  \vdots & &  \ddots & & \vdots \\
  0      & 0 & \dots  &  \lambda_t & 1 \\
  0      & 0 & \dots  &     0      & \lambda_t
  \end{pmatrix}. \]
Then $M_{(g_1,\dots,g_t)}$ has exactly
$t$ eigenspaces $V_1,\dots, V_t$ of respective dimensions
$g_1,\dots, g_t$ and  the number of its eigenvectors
is $\nu_{(g_1,\dots, g_t)}=\sum_{i=1}^t (q^{g_i}-1)$.
Since $\theta_{(g_1,\dots, g_t)}:=\nu_{(g_1,\dots, g_t)}/(q-1)$ we see that
the map
\begin{equation}\label{corrispondenza biettiva}
\begin{array}{l}
\varphi\colon D\rightarrow E\\
\varphi((g_1,\dots, g_t))=\theta_{(g_1,\dots, g_t)}\\
\varphi(0)=0.
\end{array}	
\end{equation}	
is well defined.
\begin{lemma}\label{injection}
	Take $1\leq t,t'\leq q$ and let $1\leq\alpha_1\leq\dots\leq\alpha_t$
	and $1\leq\beta_1\leq\dots\leq\beta_{t'}$ be integers such that
	\begin{equation}\label{eigeq} \sum_{i=1}^t (q^{\alpha_i}-1)=
		\sum_{i=1}^{t'} (q^{\beta_i}-1).
	\end{equation}
	Then $t=t'$ and $\alpha_i=\beta_i$ for all $i=1,\dots,t$.
\end{lemma}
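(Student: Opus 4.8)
The plan is to prove the stronger claim that the whole multiset $\{\alpha_1,\dots,\alpha_t\}$ coincides with $\{\beta_1,\dots,\beta_{t'}\}$; since both tuples are sorted non-decreasingly, multiset equality immediately yields $t=t'$ and $\alpha_i=\beta_i$ for every $i$. I would split the argument into two parts: first pin down the number of summands, then recover the exponents.

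For the first part, I reduce~\eqref{eigeq} modulo $q$. Since every $\alpha_i\ge 1$ we have $q^{\alpha_i}\equiv 0\pmod q$, hence $q^{\alpha_i}-1\equiv -1\pmod q$; summing, the left-hand side is $\equiv -t$ and the right-hand side is $\equiv -t'$ modulo $q$. Thus $t\equiv t'\pmod q$. As both $t$ and $t'$ lie in $\{1,\dots,q\}$, a set on which reduction modulo $q$ is injective (the residues are $1,\dots,q-1,0$, all distinct), this forces $t=t'$. (This is the first place the hypothesis $t,t'\le q$ is used.) Adding $t=t'$ back, identity~\eqref{eigeq} becomes $A=B$, where $A:=\sum_{i=1}^t q^{\alpha_i}$ and $B:=\sum_{i=1}^t q^{\beta_i}$.

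For the second part, I would prove by induction on $t$ that an equality $\sum_{i=1}^t q^{\alpha_i}=\sum_{i=1}^t q^{\beta_i}$ with a common number $t\le q$ of summands and all exponents $\ge 1$ forces the two multisets of exponents to coincide. Let $a=\min_i\alpha_i$ and $b=\min_i\beta_i$, and assume $a\le b$. First I show $a=b$: if $a<b$ then $q^{a+1}\mid B=A$, while $A\equiv m\,q^{a}\pmod{q^{a+1}}$, where $m\ge 1$ is the multiplicity of $a$ among the $\alpha_i$; this gives $q\mid m$, and since $1\le m\le t\le q$ we get $m=q$, whence $t=q$, all $\alpha_i=a$, and $A=q^{a+1}$. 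But then $B$ is a sum of $q$ powers each at least $q^{a+1}$, so $B\ge q^{a+2}>A$, a contradiction. With $a=b$ fixed, $q^{a}$ divides $A=B$, and $A/q^{a}\equiv m\pmod q$ while $B/q^{a}\equiv m'\pmod q$, where $m,m'$ are the multiplicities of the minimal exponent on each side; hence $m\equiv m'\pmod q$, and again $1\le m,m'\le q$ forces $m=m'$. Removing these $m$ equal smallest terms from both sides yields an identity of the same shape with $t-m<t$ summands and all exponents now $\ge a+1$, to which the induction hypothesis applies.

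The main obstacle, and the reason both $t=t'$ (once established) and $t\le q$ are indispensable, is the carry phenomenon $q\cdot q^{e}=q^{e+1}$, which destroys uniqueness of representations of an integer as an unordered sum of powers of $q$ (for instance $q^{2}$ is simultaneously a single power and a sum of $q$ copies of $q$). Controlling it is exactly what the modular reductions do: reduction modulo $q$ recovers the count $t$ and, at each inductive step, the multiplicity of the least exponent, while the bound $t\le q$ guarantees that these residues determine the quantities uniquely rather than merely up to a multiple of $q$. I expect that the only points needing genuine care are verifying the inequality in the case $a<b$ and checking that the induction terminates, which it does because each step removes $m\ge 1$ summands.
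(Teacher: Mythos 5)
Your proof is correct, but the second half proceeds by a genuinely different mechanism than the paper's. Both arguments open identically: reduce~\eqref{eigeq} modulo $q$ to get $t\equiv t'\pmod q$ and use $1\leq t,t'\leq q$ to conclude $t=t'$, then pass to $\sum_i q^{\alpha_i}=\sum_i q^{\beta_i}$. From there the paper inducts on $t$ by comparing the \emph{largest} exponents: if $\alpha_{t+1}=\beta_{t+1}$ it cancels them and applies the inductive hypothesis, and otherwise it derives a contradiction from the magnitude bound $\sum_{i=1}^{t+1}q^{\beta_i}\leq (t+1)q^{\alpha_{t+1}-1}-t$, using $t+1\leq q$; its base case $t=2$ needs a separate, slightly fiddly congruence argument with a special treatment of $q=2$. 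You instead peel off the \emph{smallest} exponents: congruences modulo $q^{a+1}$ show the minimal exponents on the two sides agree and have equal multiplicities $m=m'$ (again pinned down by $1\leq m,m'\leq q$), after which you delete $m$ terms from each side and apply strong induction. Your route is essentially the uniqueness of base-$q$ representations with digit counts bounded by $q$; it buys a cleaner uniform step (no case split on whether the extreme exponents coincide, no separate $q=2$ analysis, and the base case is trivial), at the mild cost of needing complete rather than simple induction since each step removes $m\geq 1$ terms. Both proofs use the hypothesis $t,t'\leq q$ in exactly two places, and in the same roles: once to equate the number of summands, once to convert a congruence (for you, on multiplicities; for the paper, an inequality on sums) into an equality.
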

\begin{proof}
  Reducing~\eqref{eigeq} modulus $q$ we obtain
	$-t\equiv -t'\pmod q$. As $1\leq t,t'\leq q$ this implies
	$t=t'$.
        Since $t=t'$, we can now rewrite~\eqref{eigeq} as
	\begin{equation}\label{eqr} \sum_{i=1}^t q^{\alpha_i}=
		\sum_{i=1}^{t} q^{\beta_i}.
	\end{equation}
	In order to prove the claim, we now proceed by induction on
        the number $t$ of terms in~\eqref{eqr}.
        \begin{itemize}
        \item
	If $t=2$, suppose $q^{\alpha_1}+q^{\alpha_2}=q^{\beta_1}+q^{\beta_2}$,
	that is
	\[ q^{\alpha_1}(1+q^{\alpha_2-\alpha_1})=q^{\beta_1}(1+q^{\beta_2-\beta_1}).
	\]
	Assume by contradiction
        $\alpha_1\neq\beta_1$; so we can take without loss of generality
	$\alpha_1>\beta_1$. So we get
	\begin{equation}\label{eq2aa}
	q^{\alpha_1-\beta_1}(1+q^{\alpha_2-\alpha_1})=(1+q^{\beta_2-\beta_1}),
      \end{equation}
      where all the exponents are non-negative.
	If we reduce~\eqref{eq2aa} modulus
        $q$ we get $(1+q^{\beta_2-\beta_1})\equiv0\pmod q$.
        If $q^{\beta_2-\beta_1}\neq 1$ this gives $1\equiv0\pmod q$,
        a contradiction. So it must be $\beta_2=\beta_1$ and we get
        $2\equiv0\pmod q$, which is possible only if $q=2$.
        However in this case~\eqref{eq2aa} becomes
	\[ 2^{\alpha_1-\beta_1}(1+2^{\alpha_2-\alpha_1})=2, \]
	which forces $\alpha_2=\alpha_1$
        and $\alpha_1=\beta_1$, contradicting the hypothesis.
        So $\alpha_1=\beta_1$ and, consequently, $\alpha_2=\beta_2$.
      \item
        Suppose $2<t<q-1$  By induction hypothesis, the condition
	\[ \sum_{i=1}^t q^{\alpha_i}= \sum_{i=1}^t q^{\beta_i}
	\Leftrightarrow (\alpha_1,\dots,\alpha_t)=(\beta_1,\dots,\beta_t) \]
	holds. We claim that it also holds for $t+1\leq q$ terms.
	If $\sum_{i=1}^{t+1} q^{\alpha_i}= \sum_{i=1}^{t+1} q^{\beta_i}$
	with $\alpha_{t+1}=\beta_{t+1}$, then,
        subtracting on the left and right hand side $q^{\alpha_{t+1}}$
        and then applying
        the inductive hypothesis we get
	$(\alpha_1,\dots,\alpha_t)=(\beta_1,\dots,\beta_t)$
        and we are done.
	Suppose then $\alpha_{t+1}\neq\beta_{t+1}$ and assume without loss
	of generality $\beta_{t+1}\leq\alpha_{t+1}-1$ and that,
        by contradiction,
        \[ \sum_{i=1}^{t+1} q^{\alpha_i}= \sum_{i=1}^{t+1} q^{\beta_i}.
        \]
        Observe that for all $1\leq i\leq t+1$
        we have $q^{\beta_i}\leq q^{\beta_{t+1}}\leq
        q^{\alpha_{t+1}-1}$ and $q^{\alpha_i}\geq q^0=1$,
        so
        \begin{equation*} q^{\alpha_{t+1}}=
		\sum_{i=1}^{t+1}q^{\beta_i}-\sum_{i=1}^{t}q^{\alpha_i}
		\leq 
		\sum_{i=1}^{t+1}q^{\alpha_{t+1}-1}-\sum_{i=1}^t1=
		(t+1)q^{\alpha_{t+1}-1}-t.
	\end{equation*}
	Since $0<t+1\leq q$ this implies $q^{\alpha_{t+1}}\leq q^{\alpha_{t+1}}-t$
	which is a contradiction.
	It follows that it must be $\alpha_{t+1}=\beta_{t+1}$.
	This completes the proof.
      \end{itemize}
\end{proof}

\begin{prop}\label{biezione}
The sets $E$ and $D$  defined in~\eqref{autovettori} and~\eqref{lista-} are in bijective correspondence.
\end{prop}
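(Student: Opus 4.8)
The plan is to verify that the explicit correspondence $\varphi\colon D\to E$ introduced in~\eqref{corrispondenza biettiva} is a bijection, splitting the argument into well-definedness, injectivity, and surjectivity. Well-definedness, namely $\varphi(D)\subseteq E$, is already furnished by the block-matrix construction preceding the statement: every tuple $(g_1,\dots,g_t)\in D$ is realised by the non-scalar matrix $M_{(g_1,\dots,g_t)}$, whose distinct eigenspaces have exactly the dimensions $g_1,\dots,g_t$, so that $\theta_{M_{(g_1,\dots,g_t)}}=\sum_{i=1}^t(q^{g_i}-1)/(q-1)\in E$; and the value $0$ belongs to $E$ because the companion matrix of a degree-$(n+1)$ monic irreducible polynomial is non-scalar with no eigenvalue in $\FF_q$.

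Injectivity reduces directly to Lemma~\ref{injection}. If two nonzero tuples $(g_1,\dots,g_t)$ and $(h_1,\dots,h_{t'})$ have the same image, then clearing the factor $q-1$ yields $\sum_{i=1}^t(q^{g_i}-1)=\sum_{j=1}^{t'}(q^{h_j}-1)$ with $1\le t,t'\le q$, whence $t=t'$ and $g_i=h_i$ by Lemma~\ref{injection}. Since the zero tuple is the unique preimage of $0$ (every nonzero tuple gives a strictly positive value), $\varphi$ is injective.

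The substantive step is surjectivity, which I would obtain by reading the eigenstructure off an arbitrary non-scalar matrix. Fix $M\in M_{n+1}(q)\setminus\langle I\rangle$. If $M$ has no eigenvalue in $\FF_q$ then $\nu_M=0$ and $\theta_M=0=\varphi(0)$. Otherwise list the distinct eigenvalues $\lambda_1,\dots,\lambda_t\in\FF_q$ and let $g_1\le\dots\le g_t$ be the dimensions of their eigenspaces in increasing order. Eigenspaces attached to distinct eigenvalues are independent, so $\sum_{i=1}^t g_i\le n+1$; the eigenvalues lie in $\FF_q$, so $t\le q$; and each $g_i\ge 1$. Thus $(g_1,\dots,g_t)\in D$, and since the eigenvectors of $M$ are exactly the nonzero vectors of the (pairwise disjoint) eigenspaces, $\nu_M=\sum_{i=1}^t(q^{g_i}-1)$, giving $\theta_M=\varphi((g_1,\dots,g_t))$. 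Hence $E\subseteq\varphi(D)$, and combined with the previous steps $\varphi$ is a bijection.

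The only delicate point — and the place I would argue most carefully — is the boundary tuple with $t=1$ and $g_1=n+1$. This tuple would force $\theta=(q^{n+1}-1)/(q-1)$, a value attained only when every nonzero functional is an eigenvector, i.e.\ only by scalar matrices; consistently, the surjectivity argument never produces it, since a non-scalar matrix with a single eigenvalue in $\FF_q$ has geometric multiplicity at most $n$. I would therefore make explicit that this extremal single-block tuple corresponds to the excluded scalar class rather than to a point of $E$, so that $\varphi$ restricts to a genuine bijection between $D$ and $E$ with no spurious value being introduced or omitted.
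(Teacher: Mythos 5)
Your argument is, in its core, the paper's own proof: the same map $\varphi$ from~\eqref{corrispondenza biettiva}, well-definedness via the block-matrix construction, injectivity via Lemma~\ref{injection}, and surjectivity by reading the eigenspace dimensions off an arbitrary non-scalar matrix with at least one eigenvalue. Your surjectivity step is in fact spelled out more carefully than in the paper, where it is compressed into an appeal to the definition of $E$ together with the uniqueness statement of Lemma~\ref{injection}; the mathematical content is identical.

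The ``delicate point'' you raise at the end, however, is not a routine boundary check: it is a genuine defect of the statement, and one the paper's proof overlooks. The $1$-tuple $(n+1)$ (that is, $t=1$, $g_1=n+1$) does belong to $D$ as defined in~\eqref{lista-}, and $\varphi((n+1))=(q^{n+1}-1)/(q-1)$ is attained only by scalar matrices, hence lies outside $E$; correspondingly, for this tuple the paper's matrix $M_{(g_1,\dots,g_t)}$ degenerates to $\lambda_1 I\in\langle I\rangle$, so the claimed well-definedness of $\varphi\colon D\to E$ fails exactly there. But your resolution is self-contradictory: having granted that $(n+1)\in D$ while $\varphi((n+1))\notin E$, you cannot conclude that ``$\varphi$ restricts to a genuine bijection between $D$ and $E$''. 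What your argument (and the paper's) actually proves is that $\varphi$ is a bijection from $D\setminus\{(n+1)\}$ onto $E$, or equivalently a bijection from $D$ onto $\{\theta_M\colon M\in M_{n+1}(q)\}=E\cup\{(q^{n+1}-1)/(q-1)\}$; in particular $|D|=|E|+1$, so $D$ and $E$ as literally defined are not in bijective correspondence at all. The statement needs to be repaired, either by deleting the single tuple $(n+1)$ from $D$, or by letting $M$ range over all of $M_{n+1}(q)$ in the definition~\eqref{autovettori} of $E$. The second repair is the one consistent with how Proposition~\ref{biezione} feeds into Theorem~\ref{main thm 2}(\ref{t-cor}): there the extra tuple $(n+1)$ accounts for the weight $0$ of the zero codeword, which arises precisely from the scalar matrices.
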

\begin{proof}
We need to prove that the map defined in~\eqref{corrispondenza biettiva} is a bijective correspondence.

Injectivity follows from Lemma~\ref{injection}.
For the surjectivity, $0\in D$ is uniquely mapped to $0\in E$. If $\theta\in E$ and $\theta\geq 1$, by definition of $D$ there exists
a matrix $M\in M_{n+q}(q)\setminus\langle I\rangle$ with
$\theta_M=\theta$ where
$\nu_M=(q-1)\theta_M$ is the number of its
eigenvectors. By Lemma~\ref{injection} the list $(g_1,\dots g_t)$ of the dimensions of the eigenspaces of $M$ is uniquely determined by $\nu_M$.
In particular $g_1,\dots,g_t$ must satisfy $1\leq t\leq q$ and
$\sum_{i=1}^t g_i\leq n+1$.
We can take without loss of generality $g_i\leq g_j$ if $i\leq j$.
So there is $(g_1,\dots,g_t)\in D$ such that
$\varphi(g_1,\dots,g_t)=\theta_M$.
\end{proof}
Part~\ref{t-cor}~of Theorem~\ref{main thm 2} follows from Proposition~\ref{biezione}
and Corollary~\ref{weights}.

We now prove Part~\ref{t-isom} of Theorem~\ref{main thm 2}.
Recall that an automorphism  of a code
$\cC(\Lambda)$ is a linear map $\cC(\Lambda)\to\cC(\Lambda)$
preserving the weights of all of the codewords; see~\cite{MS}.

\begin{prop}
\label{pp1}
The code $\cC(\Lambda_1)$ admits the
  group $\PGL(n+1,q)$ as an automorphism group acting
  transitively on the components
  of the codewords.
\end{prop}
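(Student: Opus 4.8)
The plan is to realise $\PGL(n+1,q)$ inside $\Aut(\cC(\Lambda_1))$ through the matrix conjugation action, using the identification $\cC(\Lambda_1)\cong M_{n+1}(q)/\langle I\rangle$ from Proposition~\ref{prop nuova}, and then to recognise this action at the level of coordinates as a \emph{monomial} transformation (a permutation of the components composed with a rescaling). Once that is done, weight-preservation is automatic and transitivity on components reduces to the already-known transitivity of $\PGL(n+1,q)$ on the points of $\bar{\Gamma}$.

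Concretely, I would first recall that under the isomorphism $ev/\langle I\rangle\colon M_{n+1}(q)/\langle I\rangle\to\cC(\Lambda_1)$ a codeword has the form $c_M=(\Tr(X_iM))_{i=1}^{N}$, where $X_i$ are the fixed representatives of the points of $\Lambda_1$. For $g\in\PGL(n+1,q)$ I define $\Phi_g\colon\cC(\Lambda_1)\to\cC(\Lambda_1)$ by $\Phi_g(c_M)=c_{gMg^{-1}}$. Since $gIg^{-1}=I$, conjugation stabilises $\langle I\rangle$, so $\Phi_g$ is a well-defined linear bijection of the quotient, and $g\mapsto\Phi_g$ is a group homomorphism.

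Next I would compute the coordinates of $\Phi_g(c_M)$. By the lifted Segre action~\eqref{coniugio}, $g$ acts on a pure tensor by matrix conjugation, so each representative satisfies $g^{-1}X_i g=\mu_i X_{\rho(i)}$ for a permutation $\rho$ of $\{1,\dots,N\}$ induced by $g$ and scalars $\mu_i\in\FF_q^{\star}$ (the scalars arise because the $X_i$ are fixed only up to proportionality). Then
\[(\Phi_g(c_M))_i=\Tr(X_i\,gMg^{-1})=\Tr(g^{-1}X_i g\,M)=\mu_i\,\Tr(X_{\rho(i)}M)=\mu_i\,(c_M)_{\rho(i)}.\]
Hence $\Phi_g$ acts on the components by the permutation $\rho$ together with the diagonal scaling $(\mu_i)$; in particular $(\Phi_g(c_M))_i=0$ if and only if $(c_M)_{\rho(i)}=0$, so $\Phi_g$ preserves Hamming weight and is therefore an automorphism of $\cC(\Lambda_1)$. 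Transitivity on components is then immediate, since the components are indexed by the points of $\bar{\Gamma}$, the permutation $\rho$ is exactly the action of $g$ on these points, and $\PGL(n+1,q)$ acts transitively on the points of $\bar{\Gamma}$ (Subsection~\ref{long root geometry}).

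The main point requiring care is the appearance of the scalars $\mu_i$: a priori $g$ only permutes the projective points $[X_i]$, so one must guard against the induced map being an ill-defined partial permutation rather than a genuine coordinate transformation. Passing through the quotient $M_{n+1}(q)/\langle I\rangle$ makes $\Phi_g$ manifestly well defined, and the trace identity above shows the $\mu_i$ are harmless for weights. Faithfulness of $g\mapsto\Phi_g$ (needed to identify the image with $\PGL(n+1,q)$) follows from the observation that $gMg^{-1}\equiv M\pmod{\langle I\rangle}$ for all $M$ forces $g$ to be scalar; this, together with the full determination of the automorphism group, is taken up in the proof of Theorem~\ref{main thm 2}.
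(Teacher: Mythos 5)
Your proof is correct and reaches the conclusion by a genuinely different route at the decisive step. Both you and the paper realise $\PGL(n+1,q)$ via conjugation, using the identification $\cC(\Lambda_1)\cong M_{n+1}(q)/\langle I\rangle$ of Proposition~\ref{prop nuova}, and both obtain transitivity on components from the transitivity of $\PGL(n+1,q)$ on the points of $\bar{\Gamma}$ together with the homogeneity of $\bar{\vep}$. The difference lies in how the conjugation action is shown to be an isometry. The paper argues in two separate cases, $p\nmid(n+1)$ (identifying the quotient with $M^0_{n+1}(q)$ and taking trace-zero representatives) and $p\mid(n+1)$ (working with $\langle I\rangle$-cosets), and in each case deduces weight preservation from Corollary~\ref{weights}: the weight of $c_M$ depends only on $\theta_M$, and conjugate matrices have the same number of eigenvectors. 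You instead prove the stronger coordinate-level fact that $\Phi_g$ is \emph{monomial}, $(\Phi_g(c_M))_i=\mu_i\,(c_M)_{\rho(i)}$, via cyclicity of the trace and the invariance of $\Lambda_1$ under conjugation. This makes weight preservation automatic (no appeal to the weight spectrum), eliminates the characteristic case split entirely, and exhibits the automorphisms as monomial transformations, which is the natural notion of equivalence for projective codes; what the paper's route buys in exchange is that it re-uses already established machinery and never needs the scalars $\mu_i$ or the permutation $\rho$. Two small remarks: your $\rho$ is the permutation induced on the points of $\Lambda_1$ by $g^{-1}$ rather than $g$, which is harmless for transitivity; and the faithfulness of $g\mapsto\Phi_g$, which you defer, is asserted with comparable brevity in the paper (the kernel of the conjugation action on the quotient is the scalars), though it is genuinely needed to claim the group is $\PGL(n+1,q)$ --- if you want it self-contained, observe that $M\mapsto gMg^{-1}-M$ is linear with values in $\langle I\rangle$, and evaluating on elementary matrices (using that conjugation is an algebra automorphism) forces $g$ to be scalar.
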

\begin{proof}
  We need to distinguish two cases.
  \begin{enumerate}[(A)]
  \item\label{CA}
    \fbox{$p\nmid (n+1)$} In this case
 $M_{n+1}(q)/\langle I\rangle\cong M_{n+1}^0(q)$;
 see~\cite[Lemma 1.5]{Cardinali2025}
 Since the group $\GL(n+1,q)$ acts by conjugation
  on $M_{n+1}^0(q)$ and
  the kernel of this action is given by the scalar
  matrices $\langle I\rangle$,  the group of transformations
  induced by $\GL(n+1,q)$ on $M_{n+1}^0(q)$ is isomorphic to $\PGL(n+1,q)$. 
  By  Proposition~\ref{prop nuova} and the proof of Corollary~\ref{co nuovo}, $\cC(\Lambda_1)$ and $M_{n+1}^0(q)$ are isomorphic as vector spaces and for any codeword $c\in\cC(\Lambda_1)$
  there exists $M\in M_{n+1}^0(q)$
  such that $c=c_{\mathfrak{m}},$ where $\mathfrak{m}\in M_{n+1}^*(q)$ and
  $\mathfrak{m}(X)=\Tr(XM).$

  For $g\in\GL(n+1,q)$, consider the action $\phi$ on $\cC(\Lambda_1)$
  given by $c\to c^g$ where $c^g$ is
  $c^g=c_{{{\mathfrak m}}^g}$ with  ${\mathfrak m}^g(X):=\Tr(Xg^{-1}Mg)$.
  Clearly,   $\Tr(M)=\Tr(g^{-1}Mg)=0$; so $c^g\in \cC(\Lambda_1)$.

  The map $\phi$ is linear, since for any two codewords $c_1,c_2$
  induced respectively by matrices $M_1$ and $M_2$ with functionals
  $\mathfrak{m}_1$ and $\mathfrak{m}_2$ we have, for any $\alpha,\beta\in \FF_q$
  \begin{multline*}
    (\alpha c_1+\beta c_2)^g=((\alpha\mathfrak{m}_1+\beta\mathfrak{m}_2)^g(X_1),
    \dots(\alpha\mathfrak{m}_1+\beta\mathfrak{m}_2)^g(X_N))=\\
    (\Tr(X_1g^{-1}(\alpha M_1+\beta M_2)g),\dots,
    (\Tr(X_Ng^{-1}(\alpha M_1+\beta M_2)g))=\\
    (\Tr(\alpha X_1g^{-1}M_1g+\beta X_1g^{-1}M_2g),\dots,
    (\Tr(\alpha X_Ng^{-1}M_1g+\beta X_Ng^{-1}M_2g))=\\
    \alpha(\Tr(X_1g^{-1}M_1g),\dots,\Tr(X_Ng^{-1}M_1g))+\\
    \beta(\Tr(X_1g^{-1}M_2g),\dots,\Tr(X_Ng^{-1}M_2g))=\\
    \alpha(\mathfrak{m}_1^g(X_1),\dots,\mathfrak{m}_1^g(X_N))+
    \beta(\mathfrak{m}_2^g(X_1),\dots,\mathfrak{m}_2^g(X_N))=
    \alpha c_1^g+\beta c_2^g.
  \end{multline*}
  Since the number of eigenvectors of $M$ and that of $g^{-1}Mg$
  is the same and, by Corollary~\ref{weights}, the weight of a codeword depends on the number of eigenvectors defining it, the weight of $c^g$ is the same as that of $c$.
  Finally, since the group $\PGL(n+1,q)$ acts flag-transitively
 on
  the geometry $\PG(V)$, it is also transitive on the geometry
  $\bar{\Gamma}$ and, consequently,
  by the homogeneity of the embedding $\bar{\vep}$, also on the
  projective system $\Lambda_1$.
  This completes the proof of this case.

  \item \fbox{$p\mid (n+1)$}
  By Proposition~\ref{prop nuova}, $ev/\langle I\rangle \colon M_{n+1}(q)/\langle I\rangle\to
  \cC(\Lambda_1)$ is a vector space isomorphism but in
  this case we cannot identify $M_{n+1}(q)/\langle I\rangle$
  with $M^0_{n+1}(q)$.
  The group $\GL(n+1,q)$ acts on the space $M_{n+1}(q)$ by
  conjugation and since $\langle I\rangle$ is fixed (element-wise) by
  this action,  it also acts on $M_{n+1}(q)/\langle I\rangle$ by
  considering the action on the equivalence classes.
  Indeed for any $[\![M]\!]\in M_{n+1}(q)/\langle I\rangle$ we have
  $[\![M]\!]=\{ M+\lambda I: \lambda\in\FF_q\}$; as
  $g^{-1}(M+\lambda I)g=g^{-1}Mg+\lambda I$ for any $g\in\GL(n+1,q)$,
  we have that
  the map $[\![M]\!]\to [\![M]\!]^g:=[\![g^{-1}Mg]\!]$ is
  well defined.
  The kernel of this action is given by $\langle I\rangle$; so
  $\PGL(n+1,q)=\GL(n+1)/\langle I\rangle$ acts
  faithfully on $M_{n+1}(q)/\langle I\rangle$.
  As in Case~(\ref{CA}), this induces an action
  on the components $X_i$ of
  the codeword $c_{[\![M]\!]}=(\Tr(X_1M),\dots,\Tr(X_NM))$
    by $c_{[\![M]\!]} \to  c_{[\![M]\!]}^g:=c_{[\![M]\!]^g}$ and
    this action is linear. It remains to prove that the action
    is an isometry. However, the weight of the codeword
    $c_{[\![M]\!]}$ depends only on the number of eigenvectors
    of any matrix in ${[\![M]\!]}$ (and all such matrices have
    the same number of eigenvectors). As conjugate matrices have
    also the same number of eigenvectors, it follows that
    the weight of  $c_{[\![M]\!]}^g=c_{[\![M]\!]^g}$ is the same
    as the weight of $c_{[\![M]\!]}$. This completes the proof.
  \end{enumerate}
\end{proof}

\begin{remark}
  The automorphism group of a code over $\FF_q$ always
  contains a cyclic subgroup isomorphic to $\FF_q^*$ in its center,
  acting on the words by scalar multiplication.
  As $\PGL(n+1,q)$ has a trivial center, we can see that the automorphism
  group of $\cC(\Lambda)$ must be the direct product
  $\PGL(n+1,q)\cdot\FF_q^*$. Observe however that the action of this
  group is not the action of $\GL(n+1,q)$ by conjugation and that the action of $\PGL(n+1,q)$ on the codewords
  is different in the case $p\nmid(n+1)$ and $p\mid(n+1)$.
\end{remark}

\subsection{Proof of Theorem~\ref{main thm 2-bis}.}
In 2025, Pasini~\cite[Theorem 1.5]{Pasini24} proved that all hyperplanes
of the point-hyperplane geometry $\bar{\Gamma}$ of $\PG(V)$ are maximal subspaces; so point~\ref{mw-1} of Theorem~\ref{main thm 2-bis} follows
immediately from Proposition ~\ref{c:min}.

\begin{remark}
  We point out that the minimality of $\cC(\Lambda_1)$
  does not follow directly from the Ashikhmin and Barg
  Condition~\eqref{ab-bound} of Section~\ref{ssmin}. Indeed,
  in order to apply Condition~\eqref{ab-bound} to test a code for
  minimality we need to know the minimum and the maximum weight
  of the codewords. These are known or $\cC(\Lambda_1)$,
  thanks to Theorem~\ref{main thm 2} and Corollary~\ref{weights} and
  turn out to be
  $w_{\max}=q^{n-1}(q^{n+1}-1)/(q-1)$ and
  $w_{\min}=q^{n-1}(q^n-1)$.
  However,
  \[ \frac{w_{\max}}{w_{\min}}=\frac{(q^{n+1}-1)}{(q-1)(q^n-1)}>\frac{q}{q-1}.
    \]
\end{remark}

The following is an elementary remark on matrices of rank $1$.
\begin{lemma}
  \label{rank 1}
  A rank $1$ matrix $M\in M_{n+1}(q)$ is diagonalizable if and only if
  $\Tr(M)\neq 0$.
\end{lemma}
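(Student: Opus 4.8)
The plan is to prove Lemma~\ref{rank 1} by a direct structural analysis of rank $1$ matrices. Any rank $1$ matrix $M\in M_{n+1}(q)$ can be written as $M=x\xi$ for some nonzero column vector $x\in V$ and nonzero row vector $\xi\in V^*$, exactly as in the tensor notation of Subsection~\ref{notation}. The first observation I would record is that $\Tr(M)=\Tr(x\xi)=\xi x=\xi(x)$, so the trace condition $\Tr(M)\neq 0$ is equivalent to $\xi(x)\neq 0$.

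The core of the argument is to determine the eigenspaces of $M=x\xi$. For any row vector $v$ we have $vM=v(x\xi)=(vx)\xi$, so $vM$ is always a scalar multiple of $\xi$. This immediately shows two things. First, the kernel of $M$ (the left null space $\{v:vM=0\}$) is the hyperplane $\ker(\cdot\, x)=\{v:vx=0\}$, which has dimension $n$; these are eigenvectors for the eigenvalue $0$. Second, a vector $v$ is an eigenvector for a nonzero eigenvalue $\lambda$ precisely when $(vx)\xi=\lambda v$, which forces $v$ to be proportional to $\xi$; and $\xi$ itself satisfies $\xi M=(\xi x)\xi$, so $\xi$ is an eigenvector with eigenvalue $\xi(x)=\Tr(M)$.

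From here the dichotomy is clean. If $\Tr(M)=\xi(x)\neq 0$, then $\xi$ is an eigenvector with nonzero eigenvalue and $\xi\notin\ker(M)$ (since $\xi x\neq 0$), so the eigenspace for $0$ has dimension $n$ and the eigenspace for $\xi(x)$ has dimension $1$; these are independent and their dimensions sum to $n+1$, hence $M$ is diagonalizable (this is exactly the configuration of Lemma~\ref{max int}). If instead $\Tr(M)=\xi(x)=0$, then $\xi\in\ker(M)$, every nonzero eigenvalue has been ruled out, and the only eigenspace is $\ker(M)$ of dimension $n<n+1$; since the eigenspaces fail to span $V^*$, $M$ is not diagonalizable. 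I expect no serious obstacle here: the only point requiring a little care is verifying that when $\xi(x)\neq 0$ the two eigenspaces are genuinely distinct and together span the whole space, which follows at once because $\xi\notin\ker(M)$ forces $\langle\xi\rangle\cap\ker(M)=\{0\}$ and a dimension count gives $\dim\langle\xi\rangle+\dim\ker(M)=1+n=n+1$.
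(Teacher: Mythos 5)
Your proof is correct, and it takes a genuinely different route from the paper's. The paper argues in two separate directions: for a diagonalizable rank $1$ matrix it notes the $0$-eigenspace has dimension $n$, so there is a second eigenspace of dimension $1$ for some $\lambda\neq 0$, whence $\Tr(M)=\lambda\neq 0$; for the non-diagonalizable case it conjugates $M$ into a form in which only the last column is non-zero, computes the characteristic polynomial $(-\lambda)^n(m_{n+1,n+1}-\lambda)$, and derives a contradiction unless $m_{n+1,n+1}=0$. You instead exploit the factorization $M=x\xi$ and compute the full left spectral structure directly: $vM=(vx)\xi$ shows the $0$-eigenspace is the hyperplane $\{v:vx=0\}$ and that any eigenvector for a non-zero eigenvalue must be proportional to $\xi$, with $\xi M=(\xi x)\xi$, so the would-be second eigenvalue is exactly $\xi(x)=\Tr(M)$. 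The dichotomy then falls out of whether $\xi$ lies in the kernel or not. Your argument is more explicit and arguably better adapted to the paper's own framework: it reuses the pure-tensor notation of Subsection~\ref{notation}, makes Proposition~\ref{prop rank 1} (null trace $\Leftrightarrow$ $\xi(x)=0$, i.e. $[x]\in[\xi]$) the visible pivot of the proof, and identifies the non-zero eigenvalue as the trace itself, which the paper's proof only obtains indirectly. What the paper's proof buys in exchange is independence from the rank-$1$ factorization: it is a purely matrix-theoretic argument (conjugation plus characteristic polynomial) that does not need the tensor identification at all. One point worth keeping from your write-up is the care you take with the paper's convention of \emph{left} eigenvectors (row vectors acting as $v\mapsto vM$); your computation is consistent with it throughout, and since diagonalizability is invariant under transposition, no generality is lost.
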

\begin{proof}
  Suppose $M$ is diagonalizable of rank $1$. Clearly,
  the eigenspace $V_0$ of $0$ must have dimension $n$. So there must also
  be another eigenspace $V_\lambda$ of dimension $1$ associated to
  a non-zero eigenvalue $\lambda$. The trace of the diagonalized matrix
  is thus $\lambda\neq0$ and, since similar matrices have the same trace,
  also $\Tr(M)=\lambda\neq0$.
  Suppose now $M$ has rank $1$ and it is not diagonalizable. Choose a basis $(e_1,\dots,e_{n+1})$ of $\FF_q^{n+1}$
  where the first $n$ vectors belong to the kernel of $M$.
  So, up to conjugation,
  we can assume without loss of generality that
  only the last column of $M$ contains non-zero entries.
  The characteristic polynomial $p_M(\lambda)$ of such a matrix $M$ is
  $(-\lambda)^n(m_{n+1,n+1}-\lambda)$. If it were $m_{n+1,n+1}\neq 0$,
  then $M$ would have one further eigenvalue with eigenspace of
  dimension (at least) $1$; so $M$ would be diagonalizable, a contradiction.
  It follows
  that it must be $m_{n+1,n+1}=0$; consequently, $\Tr(M)=0$.
\end{proof}

 \begin{prop}\label{min codewords}
 Any minimum weight codeword of $\cC(\Lambda_1)$ corresponds to a hyperplane of
   the form $[N^{\perp_f}]$ where $N$ is a matrix of rank $1$ and
   trace different from $0$.
 \end{prop}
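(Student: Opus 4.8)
The plan is to read off from Corollary~\ref{weights} exactly which matrices $M$ produce a minimum weight codeword, and then to exhibit inside the relevant pencil $\langle M,I\rangle$ a representative $N$ of rank $1$ with $\Tr(N)\neq 0$ giving the same hyperplane section of $\Lambda_1$.

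First I would note that, by Corollary~\ref{weights}, the codeword associated to a non-scalar $M$ has weight $q^{n-1}\frac{q^{n+1}-1}{q-1}-q^{n-1}\theta_M$, which is a strictly decreasing function of $\theta_M$, hence of $\nu_M$. Therefore a codeword is of minimum weight if and only if $\nu_M$ is maximal among non-scalar matrices. By Lemma~\ref{max int}, this maximum $\nu_{\max}=q^n+q-2$ is attained precisely when $M$ has exactly two eigenspaces, of dimensions $n$ and $1$. Such an $M$ is automatically diagonalizable, since the dimensions of its two eigenspaces sum to $n+1=\dim V$ and so the eigenspaces span $V$.

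Next, writing $\lambda_1$ for the eigenvalue attached to the $n$-dimensional eigenspace and $\lambda_2\neq\lambda_1$ for the eigenvalue attached to the $1$-dimensional one, I would set $N:=M-\lambda_1 I$. Then $N$ has a $0$-eigenspace of dimension $n$ and the single non-zero eigenvalue $\lambda_2-\lambda_1$, whence $\rank(N)=1$ and $\Tr(N)=\lambda_2-\lambda_1\neq 0$; this is coherent with Lemma~\ref{rank 1}. In particular $N\notin\langle I\rangle$, so $[N^{\perp_f}]$ is a genuine hyperplane of $\PG(M_{n+1}(q))$. Finally, since $N\in\langle M,I\rangle$ we have $\langle N,I\rangle=\langle M,I\rangle$, so Proposition~\ref{Prop perp} yields $N^{\perp_f}\cap M^0_{n+1}(q)=M^{\perp_f}\cap M^0_{n+1}(q)$; consequently $\Lambda_1\cap[N^{\perp_f}]=\Lambda_1\cap[M^{\perp_f}]$, and the minimum weight codeword is exactly the one associated to the rank-$1$, non-zero-trace matrix $N$.

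The only point requiring care is the bookkeeping of what a codeword "corresponds to": a codeword is determined by the coset $M+\langle I\rangle$ (Proposition~\ref{prop nuova}), whereas the induced hyperplane section of $\Lambda_1$ depends on the whole pencil $\langle M,I\rangle$ through Proposition~\ref{Prop perp}. One must check that the replacement $M\mapsto N=M-\lambda_1 I$ is legitimate for the statement about hyperplanes, which it is precisely because $N$ and $M$ lie in the same pencil. All remaining ingredients are the routine linear-algebra facts already recorded in Lemma~\ref{max int} and Lemma~\ref{rank 1}, so I expect no further obstacle.
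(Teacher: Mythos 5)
Your proposal is correct and follows essentially the same route as the paper's own proof: identify the minimum weight codewords via Lemma~\ref{max int} as those with two eigenspaces of dimensions $n$ and $1$, shift by $\lambda_1 I$ to obtain a rank-$1$ representative $N$ with $\Tr(N)\neq 0$, and use Proposition~\ref{Prop perp} (the paper also cites Proposition~\ref{oorb}) to see that $M$ and $N$ give the same hyperplane section and the same codeword. Your extra remarks (the weight being strictly decreasing in $\theta_M$, and the coset-versus-pencil bookkeeping) are accurate clarifications of points the paper leaves implicit, but they do not change the argument.
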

 \begin{proof}
   By Lemma~\ref{max int}, any  minimum weight codeword corresponds to a hyperplane $[M^{\perp_f}]$ where $M$ is a diagonalizable non-scalar $(n+1)$-square matrix  having two eigenspaces of dimensions $n$ and $1$. Denote by $\lambda_1$ and $\lambda_2$ the two eigenvalues of $M$. By Proposition~\ref{Prop perp}  we have $[M^{\perp_f}]\cap \Lambda_1=[(M-\lambda_1 I)^{\perp_f}]\cap \Lambda_1$. Put $N:=M-\lambda_1 I$.
   So, by Proposition~\ref{oorb}, $N$ and $M$ determine the same codeword. Clearly, $N$ has eigenvalues $0=\lambda_1-\lambda_1$ and $\lambda_2-\lambda_1$
   with multiplicities $n$ and $1$.
   Thus, $N$
   is diagonalizable with rank $1$.
   By Lemma~\ref{rank 1}, this implies that $N$ has trace different from $0$.
 \end{proof}

\begin{prop}\label{second lowest}
Any second lowest weight codeword of $\cC(\Lambda_1)$ corresponds to a hyperplane of the form
$[N^{\perp_f}]$ where $N$ is a (non-diagonalizable) matrix of rank $1$ and trace $0$.
\end{prop}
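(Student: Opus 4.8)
The plan is to mirror the argument of Proposition~\ref{min codewords}, replacing the maximal value of $\nu_M$ by the second largest one. By Corollary~\ref{weights} the weight of the codeword attached to a non-scalar matrix $M$ is the strictly decreasing function $q^{n-1}\frac{q^{n+1}-1}{q-1}-q^{n-1}\theta_M$ of $\theta_M=\nu_M/(q-1)$; since by Lemma~\ref{max int} the minimum weight is realised exactly when $\nu_M=\nu_{\max}=q^n+q-2$, a second lowest weight codeword must be attached to a matrix $M$ whose number of eigenvectors $\nu_M$ equals the second largest value attained by $\nu$ on $M_{n+1}(q)\setminus\langle I\rangle$.

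First I would determine this second largest value and show it equals $q^n-1$. Writing $g_1\ge g_2\ge\dots\ge g_t\ge 1$ for the dimensions of the eigenspaces of a non-scalar $M$ (so that $g_1\le n$ and $\sum_i g_i\le n+1$), I distinguish two cases. If $g_1=n$, then $\sum_{i\ge 2}g_i\le 1$, so either there is a further eigenspace of dimension $1$ and $\nu_M=(q^n-1)+(q-1)=q^n+q-2$, or there is none and $\nu_M=q^n-1$. If instead $g_1\le n-1$, the merging inequality of Lemma~\ref{autovalori} together with $\sum_i g_i\le n+1$ bounds $\nu_M$ above by $q^{n-1}+q^2-2$, which is strictly smaller than $q^n-1$ for $n\ge 3$; the cases $n\le 2$ are checked directly. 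Hence no value of $\nu$ lies strictly between $q^n-1$ and $q^n+q-2$, so the second largest value is $q^n-1$, giving second lowest weight $q^{2n-1}$. I expect this bookkeeping to be the main, albeit entirely elementary, obstacle; alternatively it can be read off from the injectivity statement of Lemma~\ref{injection}, which shows that $\nu_M=q^n-1$ forces the list of eigenspace dimensions of $M$ to be exactly $(n)$.

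It then remains to identify the matrices realising $\nu_M=q^n-1$. By the previous step such an $M$ has a single eigenvalue $\lambda\in\FF_q$, whose eigenspace has dimension exactly $n$. Put $N:=M-\lambda I$. Since $\ker N$ contains the $n$-dimensional eigenspace of $\lambda$, the matrix $N$ has rank $1$. Moreover $N$ cannot be diagonalizable: a diagonalizable rank $1$ matrix has a nonzero eigenvalue with a $1$-dimensional eigenspace, which would produce a second $\FF_q$-eigenvalue of $M$ and hence $\nu_M=q^n+q-2$, a contradiction. Thus $N$ is a non-diagonalizable matrix of rank $1$, and Lemma~\ref{rank 1} gives $\Tr(N)=0$. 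Finally $N\in M+\langle I\rangle$, so by Proposition~\ref{Prop perp} we have $[N^{\perp_f}]\cap\Lambda_1=[M^{\perp_f}]\cap\Lambda_1$ and, by Proposition~\ref{oorb}, $M$ and $N$ determine the same codeword. Therefore every second lowest weight codeword corresponds to a hyperplane $[N^{\perp_f}]$ with $N$ a non-diagonalizable matrix of rank $1$ and trace $0$, as claimed.
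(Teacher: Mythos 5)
Your proof is correct and ends with exactly the paper's normalization step, but it reaches the combinatorial core by a different route. The paper never computes the second largest value of $\nu$ from scratch: it first checks that a rank-$1$, trace-$0$ matrix $N$ has $\theta_N=\frac{q^n-1}{q-1}$, which is one less than the maximum possible value $\frac{q^n+q-2}{q-1}$; since $\theta$ is integer-valued and this value is attained, such matrices give the second lowest weight for free, and only in the converse direction does the paper normalize by $\lambda I$ (via Proposition~\ref{Prop perp} and Proposition~\ref{oorb}, as you do) and exclude $t>1$ eigenspaces by a counting estimate. You instead determine the second largest value of $\nu$ ab initio through the case split $g_1=n$ versus $g_1\le n-1$; this is heavier, but it buys the explicit weight value $q^{2n-1}$, the uniqueness of the configuration $(n)$, and both implications at once. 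One point in your case analysis needs repair: Lemma~\ref{autovalori} (merging) cannot be invoked as stated when $g_1\le n-1$, because merging two eigenspaces may create one of dimension larger than $n-1$ and so leave the case under consideration; and merging all eigenspaces other than the largest one only gives $\nu_M\le q^{g_1}+q^{n+1-g_1}-2$, which at $g_1=1$ is at least $q^n-1$ and hence useless. The clean fix is the shifting inequality $q^{a+1}+q^{b-1}\ge q^{a}+q^{b}$ for $a\ge b\ge 1$: pushing mass to the largest part while it stays $\le n-1$ shows the maximum under your constraints is attained at the configuration $(n-1,2)$, yielding precisely your bound $q^{n-1}+q^2-2$. (Also, your direct check for $n=2$, $q=2$ needs the constraint $t\le q$ to exclude the configuration $(1,1,1)$, which would otherwise tie with $q^n-1$.) These are fixable slips of the same elementary kind that the paper's own converse estimate glosses over, so I regard your proposal as sound; its final identification step --- $N:=M-\lambda I$ has rank $1$, is non-diagonalizable, hence has trace $0$ by Lemma~\ref{rank 1}, and defines the same codeword as $M$ --- coincides with the paper's.
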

\begin{proof}
	By Lemma~\ref{max int} and Proposition~\ref{min codewords} the minimum
        weight codewords occur when $M$
	has rank $1$ and trace different from $0$, i.e. $M$ has rank $1$ and is diagonalizable;
	in this case $\theta=\theta_{\min}=\frac{q^n-1}{q-1}+1$.
	Suppose now that $N$ is a  non-diagonalizable  matrix of rank $1$.
        By Lemma~\ref{rank 1}, this is the same to say
	that $N$ has rank $1$ and trace $0$. Then, $\theta_N=\frac{q^n-1}{q-1}=\theta_{\min}-1$. It follows from Corollary~\ref{weights} that
	$N$ determines a codeword with second lowest weight.

	Conversely, suppose that $N$ defines a codeword with
        the second lowest weight. By Corollary~\ref{weights},
        $\theta_N=\theta_{\min}-1=\frac{q^n-1}{q-1}$.
        By Proposition~\ref{oorb}, we
	can replace $N$ with $N-\lambda I$ where $\lambda$ is the eigenvalue of $N$ whose eigenspace has the highest
	dimension. If $N$ has rank $1$, i.e.\ just one eigenspace, then we are done. Otherwise, suppose that $N$ has $t>1$ eigenspaces
	and let $d_1=(n+1)-\rank(N)\geq d_2\geq\dots\geq d_t$ be the corresponding
	dimensions. Clearly, $d_1<n$. Counting the eigenvectors of $N$ we have that the following must
	hold
	$\nu_N=\sum_{i=1}^t (q^{d_i}-1)=q^n-1$, that is $\nu_n=q^{d_1}+q^{d_2}+\dots+q^{d_t}=q^n+t-1$.
	On the other hand
	\[
	 q^{d_1}+q^{d_2}+\dots+q^{d_t}\leq q^{d_1}+q^{d_1-1}+\dots+1=\frac{q^{d_1+1}-1}{q-1}\leq
		\frac{q^n-1}{q-1}<q^n-1+t.
	\]
	This is a contradiction. So, it must be $t=1$ and $N$ has rank $1$.
\end{proof}
Propositions~\ref{min codewords} and~\ref{second lowest} give a geometrical interpretation for the minimum and the second lowest
weight codewords. Note also that a matrix $N$ of rank $1$ determines a
minimum weight codeword of $\cC(\Lambda_1)$ if and only if $[N]\not\in\Lambda_1$ while it determines a codeword with the second lowest weight if and only if $[N]\in\Lambda_1$.
\\

 By Section~\ref{hyperplanes}, the quasi-singular hyperplanes of $\bar{\Gamma}$ are in correspondence with matrices of rank $1$. By Proposition~\ref{min codewords}, any minimum weight codeword corresponds to a matrix of rank $1$ and trace different from $0$. Hence any minimum weight codeword of $\cC(\Lambda_1)$ corresponds to a quasi-singular hyperplane $\cH:=\cH_{([x],[\xi])}$ of $\bar{\Gamma}$ defined by a matrix $x\otimes \xi$ with non-null trace. This last condition amounts to say that $[x]\not\in[\xi],$ i.e. $\cH$ is a quasi-singular but not singular hyperplane of $\bar{\Gamma}$.

Take now a codeword having second lowest weight. By Proposition~\ref{second lowest}, it is in correspondence with a quasi-singular hyperplane $\cH':=\cH_{([y],[\eta])}$ of $\bar{\Gamma}$ defined by a matrix $y\otimes \eta$ with null trace. This last condition amounts to say that $[y]\in [\eta],$ i.e. $\cH'$ is a singular hyperplane of $\bar{\Gamma}$. Part~\ref{mw0} of Theorem~\ref{main thm 2-bis} is proved.

We now focus on the maximum weight codewords. By Corollary~\ref{weights}
we immediately have
 \begin{prop}
\label{maxwpr}
   Maximum weight codewords have weight $q^{n-1}(q^{n+1}-1)/(q-1)$  and correspond to hyperplanes of
   the form $[M^{\perp_f}]$ where $M$ has no eigenvalues in $\FF_q$.
 \end{prop}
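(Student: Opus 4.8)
The plan is to read the statement directly off the weight formula of Corollary~\ref{weights} together with its monotonicity in $\theta_M$. By Corollary~\ref{weights}, the codeword of $\cC(\Lambda_1)$ associated to a matrix $M$ has weight
\[ q^{n-1}\frac{q^{n+1}-1}{q-1}-q^{n-1}\theta_M, \]
which is a strictly decreasing function of the non-negative integer $\theta_M$. Hence the weight is maximal precisely when $\theta_M$ attains its smallest possible value. Since $\theta_M\geq 0$ always, the first step is simply to characterise when $\theta_M=0$ and to observe that this yields the extremal weight.

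By Definition~\ref{thetaM}, $\theta_M=0$ means that $M$ has no eigenvector in $V^*$, that is, there is no non-zero row vector $\xi$ with $\xi M=\lambda\xi$ for some $\lambda\in\FF_q$; equivalently, $M$ has no eigenvalue in $\FF_q$. Note that such an $M$ is automatically non-scalar (a scalar matrix has an eigenvalue), so $M\notin\langle I\rangle$ and $[M^{\perp_f}]$ is indeed a genuine hyperplane of $[M^0_{n+1}(q)]$ in the sense of Lemma~\ref{pesi}. Substituting $\theta_M=0$ into the displayed formula gives the candidate maximum weight $q^{n-1}(q^{n+1}-1)/(q-1)$.

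To conclude that this value is genuinely attained, and not merely an upper bound, I would exhibit a matrix with $\theta_M=0$. As already observed in the proof of Theorem~\ref{main thm 2}, the companion matrix of any monic irreducible polynomial over $\FF_q$ of degree $n+1$ is a non-scalar matrix of order $n+1$ having no eigenvalue in $\FF_q$, hence $\theta_M=0$; its associated codeword therefore realises the maximum weight. This simultaneously shows that the maximum weight equals $q^{n-1}(q^{n+1}-1)/(q-1)$ and that the maximum weight codewords are exactly those arising from hyperplanes $[M^{\perp_f}]$ with $M$ having no eigenvalue in $\FF_q$.

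There is essentially no obstacle in this argument: it is a substitution into Corollary~\ref{weights} together with the elementary fact that $\theta_M\geq 0$, with equality if and only if $M$ has no $\FF_q$-eigenvalue. The only point that requires a word of care is the \emph{existence} of a matrix attaining $\theta_M=0$, which guarantees the bound is sharp; this is settled at once by the companion-matrix construction already used earlier in the paper.
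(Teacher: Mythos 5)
Your proposal is correct and follows essentially the same route as the paper: the paper derives this proposition directly from Corollary~\ref{weights}, the weight being a strictly decreasing function of $\theta_M$, with $\theta_M=0$ exactly when $M$ has no eigenvalue in $\FF_q$. Your extra care about sharpness (the companion matrix of an irreducible polynomial of degree $n+1$) matches the paper too, since that same construction appears in the proof of Theorem~\ref{main thm 2} to show $0\in E$.
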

 By Proposition~\ref{spread-type 2},
 a hyperplane $\cH_M$ of $\bar{\Gamma}$ is of spread-type if and only if  $M$ is a matrix
 having no eigenvalue in $\FF_q$ and $M^2x\in\langle x, Mx\rangle$
 for any $x\in V\setminus\{0\}$. By Proposition~\ref{maxwpr},
 maximum weight codewords have weight $q^{n-1}(q^{n+1}-1)(q-1)$ achieved for $\theta_M=0$ and correspond to hyperplanes $[M^{\perp_f}]$ where $M$ has no eigenvalues in $\FF_q$. So, any  spread-type  hyperplane of $\bar{\Gamma}$ arising from the Segre embedding $\bar{\vep}$ of $\bar{\Gamma}$ is associated to a maximum weight codeword.

 The following proposition provides conditions for the converse.
 \begin{prop}
   \label{ppmw}
   Any maximum weight codeword of $\mathcal{C}(\Lambda_1)$ corresponding to a matrix $M\in M_{n+1}(q)\setminus\langle I\rangle$ such that the minimal polynomial of $M$ is irreducible of degree $2$
   is a spread-type hyperplane
   of $\bar{\Gamma}$.
 \end{prop}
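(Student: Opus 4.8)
The plan is to invoke Proposition~\ref{spread-type 2}, which characterizes a hyperplane $\cH_M$ of plain type as being of spread-type precisely when $M$ has no eigenvalue in $\FF_q$ and $M^2x\in\langle x,Mx\rangle$ for every non-zero $x\in V$. So I would reduce the statement to checking these two conditions for our matrix $M$. First I note that $\cH_M$ is genuinely a hyperplane of plain type: since the minimal polynomial of $M$ has degree $2$, $M$ is non-scalar, hence $M\in M_{n+1}(q)\setminus\langle I\rangle$, exactly as required in the definition of $\cH_M$ in Section~\ref{hyperplanes}.

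The first condition is essentially free. Since $M$ corresponds to a maximum weight codeword, Proposition~\ref{maxwpr} already guarantees that $M$ has no eigenvalue in $\FF_q$; alternatively this follows directly from the hypothesis, because an eigenvalue in $\FF_q$ would be a root in $\FF_q$ of the minimal polynomial, contradicting its irreducibility. The crux is the second condition. Here I would write the minimal polynomial as $m(\lambda)=\lambda^2+b\lambda+c$ with $b,c\in\FF_q$; by definition of the minimal polynomial this yields the matrix identity $M^2+bM+cI=0$, whence
\[ M^2x=-bMx-cx\in\langle x,Mx\rangle\qquad\text{for all }x\in V. \]
Both conditions of Proposition~\ref{spread-type 2} are thus satisfied, and so $\cH_M$ is of spread-type.

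I do not expect a genuine obstacle in this argument: the whole proof collapses to the observation that a degree-$2$ minimal polynomial expresses $M^2$ as an $\FF_q$-linear combination of $M$ and $I$, which is exactly the span condition demanded by Proposition~\ref{spread-type 2}, while the irreducibility of $m(\lambda)$ supplies the eigenvalue-free condition. If anything needs care, it is only making explicit that the relation $m(M)=0$ holds for $M$ as a matrix (immediate from the definition of the minimal polynomial) and that it is therefore inherited by every vector $x$, so that the span condition is verified uniformly on all of $V$ rather than eigenspace by eigenspace.
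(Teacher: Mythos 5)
Your proof is correct and follows essentially the same route as the paper: both derive the eigenvalue-free condition from Proposition~\ref{maxwpr}, use the relation $M^2+\alpha M+\beta I=0$ coming from the degree-$2$ minimal polynomial to get $M^2x\in\langle x,Mx\rangle$ for all $x\in V$, and conclude via Proposition~\ref{spread-type 2}. The only additions (noting $M$ is non-scalar, and the alternative derivation of the eigenvalue condition from irreducibility) are harmless and do not change the argument.
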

 \begin{proof}
   Since $M$ is associated to a maximum weight codeword,
   by Proposition~\ref{maxwpr} $M$ has no eigenvalue in $\FF_q$.
   By hypothesis,
   the minimal polynomial  of $M$ is an irreducible polynomial over $\FF_q$
   of the form $p_{M}(x):=x^2+\alpha x+\beta,$ for $\alpha,\beta\in \FF_q$. Then $M^2+\alpha M+\beta I=0$. So, $M^2v=-\alpha Mv-\beta v, \forall v\in V$. Hence, $M^2v\in \langle Mv, v \rangle$ and,
 by Proposition~\ref{spread-type 2}, $\cH_M$ is a spread-type  hyperplane of $\bar{\Gamma}$.
\end{proof}
Theorem~\ref{main thm 2-bis} is proved. \par\hfill $\square$\par

\section*{Acknowledgments}
Both authors are affiliated with GNSAGA of INdAM (Italy) whose support they kindly acknowledge.
This work was partially supported by the project ``CONSTR: a COllectionless-based Neuro-Symbolic Theory for learning and Reasoning'', PARTENARIATO ESTESO ``Future Artificial Intelligence Research - FAIR'', SPOKE 1 ``Human-Centered AI'' Universit\`a di Pisa,  ``NextGenerationEU'', CUP I53C22001380006.



\end{document}